\numberwithin{equation}{section}
\newcommand{\Err}{\text{Err}}
\theoremstyle{plain}
\newtheorem{Th}{Theorem}[section]
\newtheorem{Lemma}[Th]{Lemma}
\theoremstyle{definition}
\author{Yat Tin Chow \thanks{Department of Mathematics, University of California, Riverside. Research supported by Omnibus Research and Travel Award 2019, University of California, Riverside. ({ytchow@ucr.edu}).} 
\and Fuqun Han \thanks{Department of Mathematics, The Chinese University of Hong Kong, Shatin, N.T., Hong Kong.  ({fqhan@math.cuhk.edu.hk}).}
\and Jun Zou \thanks{Department of Mathematics, The Chinese University of Hong Kong, Shatin, N.T., Hong Kong. The work of this author was substantially supported by Hong Kong RGC General Research Fund 
(projects 14322516 and 14306718). ({zou@math.cuhk.edu.hk}).}}
\begin{document}
\title{A direct sampling method for the inversion of the Radon transform}
\date{}
\maketitle

\abstract{
We propose a novel direct sampling method (DSM) for the effective and stable inversion of the Radon transform.  
The DSM is based on a generalization of the important almost orthogonality property in classical DSMs to fractional order Sobolev duality products and to a new family of probing functions. The fractional order duality product proves to be able to greatly enhance the robustness of the reconstructions in some practically important but severely ill-posed inverse problems associated with the Radon transform. We present a detailed analysis to better understand the performance of the new probing and index functions, which are crucial to stable and effective numerical reconstructions. The DSM can be computed in a very fast and highly parallel manner. Numerical experiments are carried out to compare the DSM with a popular existing method, and to illustrate the efficiency, stability, and accuracy of the DSM.}

\smallskip
{\bf Key words.} inverse problem, Radon transform, direct sampling method, imaging technique

{\bf AMS subject classifications.} 44A12, 65R32, 92C55, 94A08 

\section{Introduction}
\label{sec_intro}
In this work, we consider the inverse problem of recovering a function from its Radon transform. This problem arises when we aim at recovering an object from its projections in the computed tomography (CT). Accurate, stable, and fast numerical reconstruction methods are of great importance in practice in view of the broad and increasing applications of CT scan in, e.g., medical imaging, flaw detection, and baggage security scanning. 

To recover a function from its Radon transform, analytical inversion formulas are available. And some popular approaches nowadays are based on these formulas along with various low pass filters, known as filtered back projections (FBP). Two major reasons for the popularity of the FBP method are its easy implementation and its relatively low computational complexity \cite{why_FBP}. The method performs very well when the measurement data is very accurate and available from all directions. Nonetheless, the measurement data may be highly noisy and is only available in a limited range or only a number of angles, in many applications. For instance, to minimize adverse effects brought by radiation exposure upon a patient's body during the scanning process, low dose CT is widely employed for lung cancer detection \cite{NEJMoa1102873}. However, this may lead to severely polluted measurement data \cite{lu2001noise}, and in this case, it is difficult for traditional methods to work stably. Another instance is when we apply the CT scan in luggage security checks, we may only be able to collect measurement data from a small number of or/and a limited range of angles. Those scenarios are usually named as sparse tomography and limited angle tomography \cite{sparse_tomo, limited_angle_pde_based}.
It is challenging to develop efficient and effective reconstruction methods in these scenarios as the corresponding inverse problems are severely ill-posed \cite{math_of_CT}, and the data may not be adequate to sustain the robustness of traditional methods. 

Comprehensive reviews of traditional reconstruction methods, including the FBP method, the Fourier method, and the algebraic reconstruction method, can be found in two popular monographs \cite{principle_CT, math_of_CT}. 
In order to tackle those aforementioned ill-posed scenarios, many alternative numerical reconstruction methods have been developed in recent years, based on various strategies, including wavelet or shearlet approximations \cite{shearlet_TV, wavelet_limited_angle}, Bayesian method with appropriate priors \cite{chen2008nonlocal}, the PDE approach making use of the propagation of X-rays \cite{limited_angle_pde_based}, and the deep learning technique to complete missing data \cite{sinogram_completion_NN}.
Many of these methods work well in the sparse tomography \cite{sparse_tomo}, the limited angle tomography \cite{sinogram_completion_NN,shearlet_TV, limited_angle_pde_based}, and the low dose CT \cite{chen2008nonlocal}. These methods have demonstrated good potential in improving existing methods and were already applied in many real life scenarios. 
We would like to remark that since these algorithms usually involve more advanced mathematical or statistical tools, they may either employ certain optimization functional or leverage on the availability of a huge training dataset, which may lead to higher computational and storage complexities than the standard FBP methods.  Instead, we will propose a method that avoids high computational and storage complexities, and at the same time obtain a reasonable reconstruction in these difficult cases and challenging scenarios. 

In this work, we design a novel direct sampling method (DSM) for the inversion of the Radon transform. 
This type of methods was originally motivated by the almost orthogonality property between the fundamental solution of the forward problem and a properly chosen family of probing functions under certain Sobolev duality products. The DSMs have been constructed and developed for various highly nonlinear and severely ill-posed inverse problems; see, e.g., \cite{DOT, Chow_2014, chow2018a, Ito_2012, li2013a,potthast2010study}, including the wave and non-wave type inverse problems. These developments have demonstrated that the DSMs are robust against noise and could generate reasonable reconstruction results even with highly limited measurement data. These attractive features motivate us naturally to design a DSM for inverting the Radon transform, 
and this is the main focus of the current work. A key observation in our development is that if the measurement data is directly back-projected by the dual of the Radon transform, the result can be represented by an integral equation with the Green's function associated with (a fractional) Laplacian as its kernel. This suggests us to make full use of the important almost orthogonality property between the Green's function and a special family of probing functions under a fractional order Sobolev duality product. 
The choice of the fractional order operator arises naturally considering the ill-posedness of the inverse problem under noisy and incomplete measurement data, and turns out to be able to greatly enhance the robustness of the new DSM. In the meantime, in order to generate more satisfactory reconstruction results, we introduce the probing functions that depend on the sampling interval, which can further render a point-wise convergence of the index function in certain scenarios. 
From the perspective of the numerical computations, the DSM can be computed with low computational efforts and simultaneously with the measurement process. With these features, the new DSM is expected to find applications in tackling some inverse problems associated with the Radon transform, such as those arising from security scanning, cancer detection, and portable CT scanner.  These will be further verified numerically in section \ref{sec_num_eg}.

The rest of the paper runs as follows. Section \ref{sec_principle} introduces basic motivations and principles behind direct sampling type methods for the inversion of the Radon transform, including our detailed choices of probing and index functions. Section \ref{sec_verification} provides mathematical justifications for the novel DSM and investigates how the choice of some critical parameters in the sampling algorithm affects the reconstruction. Section \ref{sec_other} extends the newly proposed DSM to the limited angle tomography and the exponential Radon transform. Section \ref{sec_implementation} presents some strategies for the numerical implementation to enhance the robustness and reduce the computational complexity of the new DSM. Section \ref{sec_num_eg} demonstrates a series of numerical experiments by the new sampling method for some highly ill-posed scenarios, along with a comparison with the popular FBP method. 

\section{Principles of DSMs in inverting the Radon transform}
\label{sec_principle}
 In this section, we explain the basic principles of direct sampling type methods for the inversion of the Radon transform. The spirit of direct sampling type methods is to leverage upon an almost orthogonality property between the family of fundamental solutions of the forward problem and a set of probing functions under an appropriately chosen duality product \cite{DOT, Chow_2014, chow2018a}. With this in mind, we first represent the measurement data with the Green's function of (a fractional) Laplacian and then introduce a fractional order Sobolev duality product for the coupling of the measurement data and the probing function. At the same time, a family of probing functions will be constructed. Finally, an index function is defined to generate a direct sampling method for the inversion of the Radon transform.

Let us consider the target function to be recovered as $f$, which is contained in $L^{2}(\Omega)$, where $\Omega$ is a compact set in $\mathbb{R}^n$ ($n = 2$, $3$). Moreover, we assume $B(0,r_1) \subseteq \Omega \subseteq B(0,r_2)$, with $0<r_1\leq r_2$, where $B(x,r)$ is the ball centered at $x$ with radius $r$. The Radon transform of a function $f$ and its dual acting on a function $g\in L^{\infty}(S^{n-1}\times \mathbb{R}^n)$ are defined respectively by
\begin{equation}
\label{radon_para}
    Rf(\theta,t) := \int_{x\cdot \theta = t}f(x)dx_L = \int_{\mathbb{R}^n}f(x)\delta(t-x\cdot \theta)dx\,,\quad R^*g(x) := \int_{S^{n-1}}g(\theta,x\cdot \theta)d\theta \,,
\end{equation}
where $\theta \in S^{n-1}$, $t\in \mathbb{R}$, $x\in \Omega$, and $t = x\cdot \theta$ represents a hyperplane with normal direction $\theta$ and distance $t$ to the origin. We shall first focus on the case that $Rf(\theta,t)$ is avaliable for all $\theta \in S^{n-1}$ and for all $t \in I_{\theta}$, where $I_{\theta}$ is defined such that 
\begin{equation}
\label{def_Itheta}
  \Omega \subset \bigcup_{t \in I_{\theta}} \{x : t = x\cdot \theta\}\,.
\end{equation}
 In other words, we have measurements for all hyperplanes that intersect with the convex hull of $\Omega$. In section \ref{section_Limited_Angle}, we shall further consider the application of DSM for reconstruction with limited angle measurement, i.e., $t$ is only available for a subset of $I_{\theta}$.

A crucial motivation in our subsequent design of a DSM is the following inherent mathematical connection between the Radon transform and  (a fractional) Laplacian \cite{helgason1980the}:
\begin{equation}
\label{dual_G_relation}
      R^*Rf(x) = \frac{c_{n}}{d_n}\int_{\Omega}f(y)G_x(y) dy \quad \mbox{with~~} d_n =  \frac{\pi^{1/2}}{(4\pi)^{\frac{n-1}{2}}\Gamma(\frac{n}{2})} \quad \mbox{and~~}  c_{n} = \frac{\Gamma(\frac{n-1}{2})}{2\pi^{\frac{n+1}{2}}}\,,
\end{equation}
where $G_x(y) = |x-y|^{-1}$ is the Green's function for the (fractional) Laplacian operator $(-\Delta)^{(n-1)/{2}}$. The fractional Laplacian can be defined through various approaches which are equivalent to one another under appropriate assumptions on the regularity of $f$ \cite{ten}. We shall consider two definitions via a singular integral representation and a Fourier multiplier, respectively.

The following equivalent inversion formula will be frequently used in our subsequent analysis:
\begin{equation}
\label{inverse_formula}
    f(x) = (-\Delta)^{\frac{(n-1)}{2}}u_s(x) \quad \mbox{with~~} u_s(x) := d_nR^*Rf(x)\,.
\end{equation}
We shall call $u_s$ as the measurement data since the dual transform or the back projection $R^*$ of the Radon transform is standard and explicitly available after the Radon transform $Rf(\theta,t)$. 

The relation \eqref{inverse_formula} can be considered as the most important motivation for many existing reconstruction methods, e.g., the FBP and Fourier methods. These reconstruction methods involve usually the application of a pseudo-differential operator on the noisy measurement data which is not preferable for those ill-posed scenarios that were mentioned in the Introduction.

We remark that in order to allow \eqref{inverse_formula} to be held in $\mathbb{R}^2$, we shall assume that $f$ lies in the Schwarz space which is the space of functions whose derivatives are all rapidly decreasing. This assumption will not affect the feasibility of reconstructing the target function $f \in L^2(\Omega)$. Using the density of smooth functions in $L^{2}(\Omega)$, all our upcoming analyses involving \eqref{inverse_formula} (section \ref{sec_verification}) can be first carried out for smooth functions, and then extended to a more general class of target functions by a standard density argument.

To define an index function in a direct sampling method, we first introduce a duality product of order $\gamma > 0$ for the coupling of the measurement data $u_s$ with some appropriately selected probing functions (to be defined):
\begin{equation}
\label{def_inner_pro}
    \langle v, w\rangle _{H^{\gamma}(\mathbb{R}^n)} := \int_{\mathbb{R}^n} v \,(-\Delta)^{\gamma}w dx\,, \quad \forall\,v\in L^2(\mathbb{R}^n)\,, ~w \in H^{2\gamma}(\mathbb{R}^n)\,.
 \end{equation}
We remark that $v$ will be often the noisy measurement data in our proposed DSM, and the parameter $\gamma$ is called the Sobolev scale of the duality product.

The new DSM will reply on a critical index function, which involves an appropriately selected family of probing functions. Before going on with more details, we first present one of the primary motivations for employing the duality product in \eqref{def_inner_pro} and the construction of probing functions for the inversion of the Radon transform. 
Letting us consider $n=2$, then we choose $w = u_s$ from \eqref{inverse_formula}, $v = G_z$ and Sobolev scale $\gamma = 1$ in \eqref{def_inner_pro}. Then we can easily derive by the definition of the Green's function and the inversion formula in \eqref{inverse_formula} that
\begin{equation}
\label{motivation_duality_inverse}
   ((-\Delta)u_s\ast G_0)(z) = \mathcal{F}^{-1}\{|\omega|\mathcal{F}(f)\mathcal{F}(G_0)\}(z) = f(z)\,,
\end{equation}
where $\mathcal{F}$ and $\mathcal{F}^{-1}$ denote the Fourier transform and the inverse Fourier transform, and $\omega$ is the variable in the frequency domain. Therefore, this duality product can be linked with an exact reconstruction formula. However, we may directly observe that taking the Laplacian on $u_s$ will cause numerical instability due to the noise in the data, especially at those scenarios we mentioned in the Introduction. Furthermore, essentially different from the previous DSMs \cite{DOT, Chow_2014, chow2018a} for which the measurement data is collected on a partial boundary of the sampling domain, we now have the data $u_s$ inside $\Omega$. Hence, it is not desirable in practice for our numerical methods to involve the singularity of the Green's function (as in \eqref{motivation_duality_inverse}) in computations. For this reason, we shall introduce and justify the following strategies (in sections \ref{subsec_definition} and \ref{sec_verification}.):
\begin{itemize}
    \item To enhance the robustness against noise, a smaller Sobolev scale $\gamma$ in the duality product will be preferable when the measurement data is highly noisy.  Moreover, we will illustrate in section \ref{sec_verification} the relationship between $\gamma$ and the variance of the index function under a simplified noise model.    
    \item We will introduce a special family of probing functions to avoid any singularities at the sampling point $z$ but still preserve the sharpness of the inversion formula.
\end{itemize}

\subsection{Probing and index functions}
\label{subsec_definition}
We are now going to propose an appropriate family of probing functions based on the primary motivation and principles of direct sampling type methods that we addressed earlier. For the purpose, we first define two sets of auxiliary functions $\zeta^{h}_{\alpha}$ and $\widetilde{\zeta}^{h}_{\alpha}$ for any $0<h<1$ and $\alpha \in \mathbb{R}$:
\begin{equation}
\label{def_zeta}
  \zeta^{h}_{\alpha}(x) := \begin{cases}
        |x|^{-\alpha}\,,& \quad \text{when } |x|\geq h \,,\\
        \psi_{\alpha}(|x|)\,,& \quad  \text{when } |x|<h\,;\\ 
    \end{cases}\qquad  
     \widetilde{\zeta}^{h}_{\alpha}(x) := \begin{cases}
        |x|^{-\alpha}\,,& \quad \text{when } |x|\geq h \,,\\
        h^{-\alpha}\,,& \quad  \text{when } |x|<h\,;\\
    \end{cases}
\end{equation}
where $\psi_{\alpha}(x)$ is a smooth extension function such that $\zeta^{h}_{\alpha}(x)\in C^{2,1}(\mathbb{R}^n)$ and $||\zeta^{h}_{\alpha} - \widetilde{\zeta}^h_{\alpha}||_{L^1(\mathbb{R}^n)}<h$. 
By the density of smooth functions in $L^2(\mathbb{R}^n)$, we will present an explicit choice of the smooth extension function $\psi_{\alpha}$ that we use in our numerical computations with verification of its desired property in Appendix \ref{sec_appencix}.

In the sequel, $\zeta^h_{\alpha}$ is used to construct a crucial family of probing functions, and $\widetilde{\zeta}^h_{\alpha}$ will be repeatedly employed in the theoretical justification of the DSM in section \ref{sec_verification}. These auxiliary functions can be regarded as some delicate modifications of the Green's function associated with the (fractional) Laplacian $(-\Delta)^{(n-1)/2}$. The modifications are necessary for two reasons. The first is that the original Green's function is singular at the origin, therefore we need to remove the singularity but still preserve certain smoothness property. Secondly, a key parameter $\alpha$ is introduced to realize a more satisfactory reconstruction result. 
Indeed, we will justify in section \ref{sec_alternative_chara} that a reasonable and reliable choice is $\alpha = n+1$. 

We are now ready to define a crucial family of probing functions $\eta_z^h$ at any sampling point $z\in\Omega$:
\begin{equation}
\label{def_probing}
  \eta_z^{h}(x) :=  \zeta_{n+1}^h(x-z)\,.
\end{equation}

For the notational sake, we also denote 
\begin{equation}
\label{def_auxiliary_probing}
\widetilde{\eta}_z^h(x) : = \widetilde{\zeta}_{n+1}^h(x-z)\,.
\end{equation}

Before we move on to introduce the important index function for defining the direct sampling method, we first provide some estimates of probing functions, 
which will be used repeatedly in the verification of the new DSM in section \ref{sec_verification}.

\begin{Lemma}
	\label{lemma_L1eta}
The following estimates hold for the probing and auxiliary functions $\eta_z^{h}$ and $\widetilde{\eta}_z^h$: 
\begin{enumerate}[label=(\alph*)] 
	\item  $(-\Delta)^{\gamma}\eta_z^h(x)$ belongs to $L^{\infty}(\mathbb{R}^n)$ for $0 < \gamma < \frac{n}{2}$;
	\item $(-\Delta)^{\gamma}\eta_z^{h}(x)$ belongs to $L^2(\mathbb{R}^n)$ for $\, 0<\gamma<1$;
	\item  $(-\Delta)^{\gamma}\widetilde{\eta}_z^{h}(x)$ belongs to $L^2(\mathbb{R}^n)$ for $ 0<\gamma<1$.
\end{enumerate}
\end{Lemma}
\begin{proof}
Without loss of generality, we assume that $z$ is the origin. 

\smallskip

To show part (a), we first consider the case $\gamma \in (0,1)$. By definition, the fractional Laplacian of $\eta_0^h$ for an arbitrary point $x\in \mathbb{R}^n$ can be written as
\begin{equation}
\label{eta_integration}
\begin{split}
	(-\Delta)^{\gamma}\eta_0^h(x) = 
	& -\frac{c_{n,\gamma}}{2} \lim_{\delta\rightarrow 0}\int_{|y|>\delta }\frac{\eta_0^h(x+y) + \eta_0^h(x-y)- 2\eta_0^h(x)}{|y|^{n+2\gamma}} dy \\
 =&-\frac{c_{n,\gamma}}{2} \big(I_1 + I_2\big) \,, \quad \text{with~~} c_{n,\gamma} = \frac{4^{\gamma}\Gamma(\frac{n}{2}+\gamma)}{\pi^{n/2}|\Gamma(-\gamma)|}\,,
\end{split}
\end{equation} 
where $I_1$ and $I_2$ are
\begin{equation*}
 I_1 = \int_{|y|>\frac{|x|}{2}}\frac{\eta_0^h(x+y) + \eta_0^h(x-y)- 2\eta_0^h(x)}{|y|^{n+2\gamma}} dy\,,\,\,
 I_2 = \lim_{\delta\rightarrow 0}\int_{\delta<|y|<\frac{|x|}{2}}\frac{\eta_0^h(x+y) + \eta_0^h(x-y)- 2\eta_0^h(x)}{|y|^{n+2\gamma}} dy\,.
\end{equation*}

$I_1$ can be bounded directly by
\begin{equation}
\label{I_1_est}
	|I_1| \leq \int_{|y|>\frac{|x|}{2}}\frac{|\eta_0^h(x+y) + \eta_0^h(x-y)- 2\eta_0^h(x)|}{|y|^{n+2\gamma}} dy \leq  4||\eta_0^h||_{L^1(\mathbb{R}^n)}|x/2|^{-n-2\gamma}\,.
\end{equation} 
while $I_2$ can be bounded by
\begin{equation}
\label{I_2_est}
	|I_2|\leq \lim_{\delta\rightarrow 0}\int_{\delta<|y|<\frac{|x|}{2}}\frac{||D^2\eta_0^h||_{L^{\infty}(B(x,|x|/2))}}{|y|^{n+2\gamma-2}}dy \leq c_2(n,\gamma)||D^2\eta_0^h||_{L^{\infty}(B(x,|x|/2))}|x/2|^{2-2\gamma}\,,
\end{equation}
by using the estimate
\begin{equation}
\frac{|\eta_0^h(x+y) + \eta_0^h(x-y)- 2\eta_0^h(x)|}{|y|^{n+2\gamma}}  \leq  \frac{||D^2 \eta_0^h||_{L^{\infty}(B(x,|x|/2))}}{|y|^{n+2\gamma -2}}\,, \quad \text{where } |y|<\frac{|x|}{2}\,.
\end{equation}
from the second order Taylor's theorem.
Here the constant $c_2(n,\gamma)$ only depends on $n$ and $\gamma$. Combining \eqref{I_1_est} and \eqref{I_2_est}, we conclude that $(-\Delta)^{\gamma}\eta_0^h \in L^{\infty}(\mathbb{R}^n)$ for $\gamma \in (0,1)$.

Now we show  $(-\Delta)^{\gamma}\eta_z^h(x) \in L^{\infty}(\mathbb{R}^n)$ for $\gamma \in [1,\frac{n}{2})$. We first establish a result that will be used twice in the following proof. For arbitrary $g\in C^{0,1}(\mathbb{R}^n) \cap L^{\infty}(\mathbb{R}^n)$ and $\beta \in (0,1/2)$, we have
\begin{align}
\label{C01_estimate}
	|(-\Delta)^{\beta}g(x)| 	\leq &\,\,c_{n,\beta}\bigg[\int_{B(0,1)} \frac{||g||_{C^{0,1}(\mathbb{R}^n)}}{|y|^{n+2\beta-1}}dy  +\int_{\mathbb{R}^n\setminus B(0,1)}\frac{2||g||_{L^{\infty}(\mathbb{R}^n)}}{|y|^{n+2\beta}}dy\bigg]\\
	\leq& \,\, c_3(n,\beta) \big(||g||_{C^{0,1}(\mathbb{R}^n)} + ||g||_{L^{\infty}(\mathbb{R}^n)}\big)\notag\,
\end{align}
for $c_{n,\beta}$ defined in \eqref{eta_integration} and some constant $c_3(n,\beta)$ that only depends on $n$ and $\beta$. The above estimate implies $(-\Delta)^{\beta}g\in L^{\infty}(\mathbb{R}^n)$ for any $g\in C^{0,1}(\mathbb{R}^n) \cap L^{\infty}(\mathbb{R}^n)$ and $\beta \in (0,1/2)$.

Next, by the construction of the probing function in \eqref{def_probing}, we have $(-\Delta)\eta_0^h \in C^{0,1}(\mathbb{R}^n) \cap L^{\infty}(\mathbb{R}^n)$ which shows the case $\gamma = 1$. For $\gamma >1$, to make use of the  estimate in \eqref{C01_estimate}, we observe that the order $\gamma$ fractional Laplacian of the probing function can be written as
$$
(-\Delta)^{\gamma}\eta_0^h = (-\Delta)^{\gamma-1}\big((-\Delta)\eta_0^h\big)\quad \mbox{with~~} \gamma \in (1,\frac{n}{2})\,.
$$
This indicates $(-\Delta)^{\gamma}\eta_0^h\in L^{\infty}(\mathbb{R}^n)$, by replacing $g$ by $(-\Delta)\eta_0^h$ and $\beta$ by $\gamma - 1\in (0,1/2)$ in \eqref{C01_estimate}.

\smallskip

To show part (b) that $(-\Delta)^{\gamma}\eta_0^h \in L^2(\mathbb{R}^n)$ for $\gamma \in (0,1)$, it suffices to show that $|(-\Delta)^{\gamma}\eta_0^h(x)|\leq c |x|^{-n-\epsilon}$ for $|x|>2h$, $\epsilon>0$, and some constant $c$ that is independent of $x$. This property is satisfied by $I_1$ due to \eqref{I_1_est}. Now we investigate $I_2$ in \eqref{eta_integration} more carefully. By the definition of the probing function, we have 
\begin{equation}
\label{D2_eta}
||D^2\eta_0^h||_{L^{\infty}(B(x,|x|/2))} \leq 4n^2|x/2|^{-n-3} \text{~~when~~} |x|>2h \,.
\end{equation}
Substituting this estimate into \eqref{I_2_est} implies that $|I_2| \leq c_2(n,\gamma)4n^2|x/2|^{-n-1-2\gamma}$. Hence, we can conclude that $(-\Delta)^{\gamma}\eta_0^h \in L^2(\mathbb{R}^n)$.

\smallskip

To show part (c) that $(-\Delta)^{\gamma}\widetilde{\eta}_0^h \in L^2(\mathbb{R}^n)$ for $\gamma \in (0,1)$. We first notice that for $|x|>2h$, all above estimates in \eqref{I_1_est}, \eqref{I_2_est}, and \eqref{D2_eta} hold after replacing $\eta_0^h$ by $\widetilde{\eta}_0^h$ since the point-wise value of $\widetilde{\eta}_0^h(x)$ for $|x|<h$ is not involved in those estimations. Therefore, we have $(-\Delta)^{\gamma}\widetilde{\eta}_0^h \in L^2(\mathbb{R}^n\setminus B(0,2h))$. Now, it only remains to show $(-\Delta)^{\gamma}\widetilde{\eta}_0^h$ belongs to $L^2(B(0,2h))$. 

For $\gamma \in (0,1/2)$, with \eqref{C01_estimate} and the definition in \eqref{def_zeta} which states that $\widetilde{\eta}_0^h \in C^{0,1}(\mathbb{R}^n)$, we have $(-\Delta)^{\gamma}\widetilde{\eta}_0^h \in L^{\infty}(B(0,2h))\subset L^2(B(0,2h))$. 

For $\gamma \in [1/2,1)$, denoting $\overline{\eta}_0^h := \eta_0^h - \widetilde{\eta}_0^h$ which satisfies $\overline{\eta}_0^h(x) =0$ if $|x|>h$. Then showing $(-\Delta)^{\gamma}\widetilde{\eta}_0^h$ belongs to $L^2(B(0,2h))$ is equivalent to showing $(-\Delta)^{\gamma}\overline{\eta}_0^h$ belongs to $L^2(B(0,2h))$. By definition, for $h<|x|<2h$, we have
\begin{equation*}
	|(-\Delta)^{\gamma}\overline{\eta}_0^h(x)| = c_{n,\gamma}\bigg|\int_{|y|<h}\frac{\overline{\eta}_0^h(y)}{|x-y|^{n+2\gamma}}dy \bigg| \leq 4\pi^2 c_{n,\gamma}||\eta_0^h - \widetilde{\eta}_0^h||_{L^{\infty}(\mathbb{R}^n)} (|x|-h)^{-2\gamma}\,.
\end{equation*}
For $|x|<h$, similarly to the decomposition in \eqref{eta_integration}, we have
{\footnotesize
\begin{equation*}
\begin{split}
	|(-\Delta)^{\gamma}\overline{\eta}_0^h(x)| \leq &\, c_{n,\gamma}\bigg|\int_{|y|<h-|x|}\frac{\overline{\eta}_0^h(x-y) + \overline{\eta}_0^h(x+y) - \overline{\eta}_0^h(x)}{2|y|^{n+2\gamma}}dy\bigg| + c_{n,\gamma}\bigg|\int_{h-|x|<|y|< 2h} \frac{\overline{\eta}_0^h(x) - \overline{\eta}_0^h(x+y)}{|y|^{n+2\gamma}}dy\bigg| \\
	\le & \,c_4(n,\gamma)\bigg[||D^2(\eta_0^h-\widetilde{\eta}_0^h)||_{L^{\infty}(B(0,h))}(h-|x|)^{2-2\gamma} + 2||\eta_0^h - \widetilde{\eta}_0^h||_{L^{\infty}(\mathbb{R}^n)}(h-|x|)^{2\gamma}\bigg]\,,
\end{split}
\end{equation*}}
for some constant $c_4(n,\gamma)$ that is independent of $x$. Combining estimates for $|x|>h$ and $|x|<h$, we conclude that $(-\Delta)^{\gamma}\overline{\eta}_0^h \in L^2(B(0,2h))$ which leads to part (c) of the lemma.
\end{proof}

We are now ready to introduce the crucial index function $I_{\gamma}^h$ that defines the direct sampling method, more accurately speaking, it generates the numerical image at all sampling points $z\in \Omega\,$:
\begin{equation}
\label{index_fun}
    I^{h}_{\gamma}(z) := \frac{\langle u_s, \,\eta_z^{h} \rangle_{H^{\gamma}(\mathbb{R}^n)}}{n(z)}\,,\, \quad 
    \text{ with }\,\,  n(z) := \langle d_n R^*R(\mathbbm{1}_{\Omega}), \eta_z^h\rangle_{H^{\gamma}(\mathbb{R}^n)}\, ,   
\end{equation}
where $\eta_z^h$ is the probing function introduced in \eqref{def_probing} and $d_n$ is defined in \eqref{dual_G_relation}. The normalization term $n(z)$ is taken to migrate the influence of the choice of $h$ and $\gamma$ on the magnitude of the index function, which is independent of the measurement data $u_s$. In particular, this choice of $n(z)$ ensures that our method is exact for constant valued target function. In the following sections \ref{sec_var_case1} and \ref{sec_var_case2}, we shall justify that the numerator of the index function, that is, the duality product between $u_s$ and $\eta_z^h$, will approximately recover the target function $f(z)$ up to a constant.
With this in mind, we observe that $n(z)$ is simply an approximation to the characteristic function of the sampling domain $\Omega$, hence $n(z)$ is nearly a constant for all sampling points in $\Omega$.

We remark that, since $u_s$ introduced in \eqref{inverse_formula} is not compactly supported,  the duality product involved in the index function \eqref{index_fun} is defined with respect to $\mathbb{R}^n$. However, the numerical implementation of the index function is still realized in a compact set due to the fact that the target function $f$ is often compactly supported in $\Omega$. The implementation of the new DSM will be presented in detail in section \ref{sec_implementation}

The proposed index function leverages upon the very important almost orthogonality property of the Green's function $G_x$ and the family of probing functions defined in \eqref{def_probing} in fractional order Sobolev duality products. Combining with the representation of the measurement data that we introduced in \eqref{inverse_formula}, this desired property helps reconstruct the target function $f$ with the index function (see the careful verification in section \ref{sec_verification}).
We now emphasize a very important feature of the novel DSM. By the definition of the index function \eqref{index_fun}, the evaluation of the index function does not involve any pseudo-differential operator applied to the noisy measurement data $u_s$, unlike many existing numerical methods in inverting the Radon transform.
This feature shall allow our DSM to be stable under high level noise and limited measurement data, which is evident from many numerical experiments in section \ref{sec_num_eg}.

Under the setting-ups above, the index function in \eqref{index_fun} gives rise to our new algorithm:

\medskip 
{Direct Sampling Method.}
Given the Radon transform $Rf(\theta,t)$ of the target function $f$ for a limited set of discrete angles $\theta\in\mathbb{S}^{n-1}$ and discrete points $t\in\mathbb{R}$, we evaluate $I_{\gamma}^h(z)$ numerically to approximate $f(z)$ for every 
sampling point $z$ in the domain $\Omega$.
\smallskip

\section{Verification of the index function}
\label{sec_verification}
In this section, we verify mathematically that the proposed index function in \eqref{index_fun} can indeed recover the target function $f$ in subsections \ref{sec_var_case1} and \ref{sec_var_case2}, in two separate scenarios that the Sobolev scale $\gamma\in ((n-1)/2,n/2)$ and $\gamma\in(0, (n-1)/2]$. 
In particular, the verification for the latter case relies on the alternative characterization of the index function that will be presented in subsection \ref{sec_alternative_chara}. Moreover, the choice of the key parameter $\alpha = n+1$ in the definition of the probing function \eqref{def_probing} will also be explained in the same subsection. In subsection \ref{sec_noise}, we will demonstrate the relationship between the Sobolev scale $\gamma$ in \eqref{def_inner_pro} and the variance of the index function under a particular noise model which provides crucial instruction on the choice of $\gamma$ during the reconstruction with noisy measurement data. In addition, the conclusion from subsection \ref{sec_noise} implies the choice of $\gamma \geq n/2$ is not preferable in real applications and hence we only consider the possibility of $\gamma <n/2$ in the following discussion.

Throughout this section, with the help of the  remarks that we mentioned after \eqref{inverse_formula}, we further assume that the target function $f$ is a smooth function.

\subsection{Verification of the index function for ${(n-1)}/{2}< \gamma < {n}/{2}$}
\label{sec_var_case1}
We will first focus on the numerator of the index function defined in \eqref{index_fun}, and verify that the duality product between $u_s = d_n R^*Rf$ and the probing function defined in \eqref{def_probing} can recover the target function as $h\rightarrow 0$ for any sampling point $z\in\Omega$. Especially, the parameter $h$ could be considered as the sampling interval in real applications. 
\begin{Lemma}
\label{lemma_ptwise_limit}
 For any $f\in C^{0,1}(\Omega)$ and $z\in \Omega$, it holds that 
\begin{equation}
\label{ptwise_limit}
  \lim_{h\rightarrow 0} h^{1+2\gamma} \langle u_s\,,\,\eta_{z}^h\rangle_{H^{\gamma}(\mathbb{R}^n)}=  ||(-\Delta)^{\gamma-\frac{n-1}{2}}\widetilde{\eta}_{0}^1||_{L^1(\mathbb{R}^n)} f(z)\,,
\end{equation}
In particular, the convergence is uniform for all $z\in \Omega$.
\end{Lemma}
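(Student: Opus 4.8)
The plan is to use the self-adjointness of the fractional Laplacian to move derivatives off the (singular) probing function and onto the smooth data, then exploit the explicit scaling of the auxiliary function. First I would write, using the definition \eqref{def_inner_pro} of the duality product and the relation \eqref{inverse_formula}, that
\[
\langle u_s,\eta_z^h\rangle_{H^\gamma(\mathbb{R}^n)}
= \int_{\mathbb{R}^n} u_s\,(-\Delta)^\gamma \eta_z^h\,dx
= \int_{\mathbb{R}^n} (-\Delta)^{\frac{n-1}{2}} u_s\,(-\Delta)^{\gamma-\frac{n-1}{2}}\eta_z^h\,dx
= \int_{\mathbb{R}^n} f(x)\,(-\Delta)^{\gamma-\frac{n-1}{2}}\eta_z^h(x)\,dx,
\]
where the splitting of $(-\Delta)^\gamma$ and the transfer of the $(-\Delta)^{(n-1)/2}$ factor is justified on the Schwartz class (smooth $f$, by the standing assumption) together with the integrability granted by Lemma \ref{lemma_L1eta}; here the range $(n-1)/2<\gamma<n/2$ ensures $\gamma-\frac{n-1}{2}\in(0,1/2)$ so that part (b) of Lemma \ref{lemma_L1eta} applies. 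Since $f\in C^{0,1}(\Omega)$ is bounded, the key point is then that the function $(-\Delta)^{\gamma-\frac{n-1}{2}}\eta_z^h$ behaves like an approximate identity at $z$ after the rescaling by $h^{1+2\gamma}$.

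Next I would make the scaling explicit. Because $\eta_z^h(x)=\zeta_{n+1}^h(x-z)$ and $\zeta_{n+1}^h$ is (up to the smooth core) homogeneous of degree $-(n+1)$, a change of variables $x = z + h\,\xi$ gives $\eta_z^h(z+h\xi) = h^{-(n+1)}\,\widetilde{\eta}_0^1(\xi)$ on the homogeneous part, and the fractional Laplacian of order $\beta:=\gamma-\frac{n-1}{2}$ scales as $(-\Delta)^\beta[\,\eta_z^h\,](z+h\xi) = h^{-(n+1)-2\beta}\,(-\Delta)^\beta\widetilde{\eta}_0^1(\xi) = h^{-n-2\gamma+1}\,(-\Delta)^\beta\widetilde{\eta}_0^1(\xi)$. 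Substituting into the integral above and multiplying by $h^{1+2\gamma}$ yields
\[
h^{1+2\gamma}\langle u_s,\eta_z^h\rangle_{H^\gamma(\mathbb{R}^n)}
= \int_{\mathbb{R}^n} f(z+h\xi)\,(-\Delta)^\beta\widetilde{\eta}_0^1(\xi)\,d\xi
\;+\; E_h(z),
\]
where $E_h(z)$ collects the discrepancy between $\eta_0^h$ and $\widetilde{\eta}_0^h$; this error is controlled by $\|\zeta_\alpha^h-\widetilde\zeta_\alpha^h\|_{L^1}<h$ from \eqref{def_zeta} and an $L^\infty$ bound on $(-\Delta)^\beta$ of the difference, so $E_h(z)\to 0$ uniformly in $z$. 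Letting $h\to 0$, continuity (indeed Lipschitz continuity) of $f$ gives $f(z+h\xi)\to f(z)$, and $(-\Delta)^\beta\widetilde\eta_0^1\in L^1(\mathbb{R}^n)$ — which is exactly the content behind parts (b)–(c) of Lemma \ref{lemma_L1eta}, together with the decay $|(-\Delta)^\beta\widetilde\eta_0^1(\xi)|\lesssim|\xi|^{-n-\epsilon}$ established there — lets us pass to the limit by dominated convergence, producing $f(z)\int_{\mathbb{R}^n}(-\Delta)^\beta\widetilde\eta_0^1(\xi)\,d\xi$. To get the $L^1$-norm rather than the signed integral, I would note that $\widetilde\eta_0^1\geq 0$ and, using the Fourier-multiplier definition of $(-\Delta)^\beta$, one has $\int (-\Delta)^\beta\widetilde\eta_0^1 = \widehat{(-\Delta)^\beta\widetilde\eta_0^1}(0)$; alternatively, since the sign of $(-\Delta)^\beta\widetilde\eta_0^1$ is constant (it is, up to positive constants, a Riesz-type potential of a nonnegative radially decreasing profile), $\int = \|\cdot\|_{L^1}$. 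This is the step I would expect to require the most care.

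The main obstacle is therefore twofold: justifying the transfer of the fractional powers onto $f$ (an integration-by-parts / Parseval argument that must be legitimized by the decay estimates of Lemma \ref{lemma_L1eta} rather than assumed), and rigorously identifying $\int_{\mathbb{R}^n}(-\Delta)^{\gamma-(n-1)/2}\widetilde\eta_0^1$ with its $L^1$-norm, which hinges on a sign-definiteness property of the fractional Laplacian applied to the modified Green's function. The uniformity in $z$ is comparatively routine: $f$ is uniformly continuous on the compact $\Omega$, the dominating function for the dominated-convergence step is independent of $z$, and the error term $E_h$ was bounded uniformly; so the convergence in \eqref{ptwise_limit} holds uniformly for $z\in\Omega$ as claimed.
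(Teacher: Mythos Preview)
Your approach is essentially the paper's: use self-adjointness (justified via Lemma \ref{lemma_L1eta}) to rewrite $\langle u_s,\eta_z^h\rangle_{H^\gamma}=\int f\,(-\Delta)^{\gamma-(n-1)/2}\widetilde{\eta}_z^h\,dx$ plus an $O(h)$ error coming from $\|\eta_z^h-\widetilde{\eta}_z^h\|_{L^1}<h$, then exploit the exact scaling $\widetilde{\eta}_0^h(x)=h^{-(n+1)}\widetilde{\eta}_0^1(x/h)$ to recognize the main term as an approximation to the identity acting on $f$. The only cosmetic difference is that the paper pairs the error as $\int(-\Delta)^{\gamma-(n-1)/2}f\cdot[\eta_z^h-\widetilde{\eta}_z^h]\,dx$ (so no fractional Laplacian of the difference is needed), and your flagged concern about identifying $\int(-\Delta)^{\gamma-(n-1)/2}\widetilde{\eta}_0^1$ with its $L^1$-norm is not addressed in the paper's proof either.
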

\begin{proof}

Firstly, based on the inversion formula \eqref{inverse_formula} and the self-adjointness of the fractional Laplacian which holds due to part (b) and (c) of Lemma \ref{lemma_L1eta}, we can write 
\begin{equation}
    \label{int_by_pats_1}
    \begin{split}
    \langle u_s,\eta_z^{h}\rangle_{H^{\gamma}(\mathbb{R}^n)} 
    =&\int_{\mathbb{R}^n}u_s(x)(-\Delta)^{\gamma}\eta_z^h(x)dx 
    =\int_{\mathbb{R}^n}(-\Delta)^{\gamma-\frac{n-1}{2}}f(x)\eta_z^{h}(x)dx \\
    =& \int_{\mathbb{R}^n}f(x)(-\Delta)^{\gamma-\frac{n-1}{2}}\widetilde{\eta}_z^{h}(x)dx + \int_{\mathbb{R}^n}(-\Delta)^{\gamma-\frac{n-1}{2}}f(x)[\eta_z^h(x) - \widetilde{\eta}_z^h(x)]dx \,.
    \end{split}
\end{equation}
For the second integration above, by definitions of $\eta_z^h$ and $\widetilde{\eta}_z^h$ in \eqref{def_probing} and \eqref{def_auxiliary_probing}, we have
\begin{equation}
\label{diff_smooth}
\int_{\mathbb{R}^n}(-\Delta)^{\gamma-\frac{n-1}{2}}f[\eta_z^h - \widetilde{\eta}_z^h ]dx \leq ||(-\Delta)^{\gamma-\frac{1}{2}}f||_{L^{\infty}(\mathbb{R}^n)}||\zeta_{n+1}^h - \widetilde{\zeta}_{n+1}^h||_{L^1(\mathbb{R}^n)}\leq  ||(-\Delta)^{\gamma-\frac{1}{2}}f||_{L^{\infty}(\mathbb{R}^n)}h\,,
\end{equation}
where the boundness of the term $||(-\Delta)^{\gamma-\frac{1}{2}}f||_{L^{\infty}(\mathbb{R}^n)}$ follows from $f\in C^{0,1}(\mathbb{R}^n)$ and the estimate  \eqref{C01_estimate}.

Moreover, by substituting $ \widetilde{\eta}_{0}^1\big(\frac{x}{h}\big) = h^{n+1}\widetilde{\eta}_0^h(x)$ which comes from \eqref{def_zeta} and \eqref{def_auxiliary_probing} into the definition of the fractional Laplacian operator, we further have the following rescaling property:
\begin{equation}
   (-\Delta)^{\gamma-\frac{n-1}{2}}(\widetilde{\eta}_0^h)(x) = h^{n+1+2\gamma}\big((-\Delta)^{\gamma-\frac{n-1}{2}}\widetilde{\eta}_{0}^1\big)\bigg(\frac{x}{h}\bigg)\,,
\end{equation}
For simplicity, we write $\beta = \gamma - \frac{n-1}{2}$. By part (c) of Lemma \ref{lemma_L1eta} that $(-\Delta)^{\beta}\widetilde{\eta}_0^1\in L^2(\mathbb{R}^n)\subset L^1(\mathbb{R}^n)$, we can define a family of approximations to the identity for all $f\in C^{0,1}({\Omega})$:
\begin{equation}
\label{approx_identity}
    \tau^{h}_{\beta}(x)=  \frac{h^{-n}(-\Delta)^{\beta}\widetilde{\eta}_{0}^1(\frac{x}{h})}{||(-\Delta)^{\beta}\widetilde{\eta}_0^1||_{L^1(\mathbb{R}^n)}} = h^{1+2\gamma}\frac{(-\Delta)^{\beta}(\widetilde{\eta}_0^h)(x)}{||(-\Delta)^{\beta}\widetilde{\eta}_0^1||_{L^1(\mathbb{R}^n)}}\,, 
    \quad \lim_{h\rightarrow 0}\int_{\mathbb{R}^n}f(y)  \tau^{h}_{\beta}(z-y)dy = f(z)\,.
\end{equation}
Then, combining \eqref{int_by_pats_1} and \eqref{approx_identity}, we conclude for a fixed sampling point $z\in\Omega$ that
\begin{equation}
\begin{split}
\lim_{h\rightarrow 0}h^{1+2\gamma}\langle u_s, {\eta}_z^h\rangle_{H^{\gamma}(\mathbb{R}^n)} = &\lim_{h\rightarrow 0} h^{1+2\gamma}\int_{\mathbb{R}^n}f(y) (-\Delta)^{\gamma-\frac{n-1}{2}}\eta_z^{h}(y)dy \\
=& ||(-\Delta)^{\gamma-\frac{n-1}{2}}\widetilde{\eta}_0^1||_{L^1(\mathbb{R}^n)}\lim_{h\rightarrow 0} \int_{\mathbb{R}^n}f(y) \tau_{\beta}^h(z-y)dy\\
=& ||(-\Delta)^{\gamma-\frac{n-1}{2}}\widetilde{\eta}_0^1||_{L^1(\mathbb{R}^n)}f(z)\,.
\end{split}
\end{equation}
In particular, for all $z\in \Omega$, the convergence is uniform as the limit in \eqref{approx_identity} is uniform.
\end{proof}

Lemma \ref{lemma_ptwise_limit} indicates that the numerator of the index function can recover the target function $f$ up to a constant, when $h$ is small enough.

We move the justification of the index function for the case of the Sobolev scale $\gamma \le (n-1)/2$ to section \ref{sec_var_case2}. Before that, we next present an alternative characterization of the index function which explains the choice of $\alpha$ in \eqref{def_probing}. This alternative characterization will be also essential to our subsequent justification of the index function for $\gamma\leq (n-1)/2$.

\subsection{Alternative characterization of the index function}
\label{sec_alternative_chara}
In this subsection, we present an alternative characterization of the index function defined in \eqref{index_fun} for all possible choices of $\gamma \in (0,\frac{n}{2})$. The characterization is mainly to obtain a dominating term in the index function with respect to the small parameter $h$ involved in the probing function \eqref{def_zeta}. More specifically, we shall show that the index function at the sampling point $z$ approximately equals to the average of $(-\Delta)^{\gamma}u_s$ at the neighborhood of $z$. This characterization will be used in twofold:
\begin{itemize}
	\item We shall justify that the preferable choice of the key parameter $\alpha \in \mathbb{R}$ involved in the probing function is $\alpha = n+1$, as we suggested in \eqref{def_probing}. To do so, we will estimate and investigate the dominating term of the index function when the probing function \eqref{def_probing} is used or replaced by other functions $\zeta_{\alpha}^h$ with $\alpha \ne n+1$;
	\item The dominating term in the index function will provide an essential tool to help us justify that the proposed DSM can approximately recover the target function $f$ when $\gamma \in (0,(n-1)/2]$ in the next subsection \ref{sec_var_case2}.
\end{itemize}

Let us first assume $\alpha = n+1$, which is the one used in the definition \eqref{def_probing}. To obtain a dominating term of the numerator of the index function, we rewrite it, by using a direct addition and subtraction,  as
\begin{align}
\label{decomposition}
    &\langle u_s,\eta_z^{h} \rangle_{H^{\gamma}(\mathbb{R}^n)} 
    = \underbrace{\int_{\mathbb{R}^n}(-\Delta)^{\gamma}u_s\big[\eta_z^h - \widetilde{\eta}_z^{h}\big] dx}_{\phi_1(z)} -\frac{1}{n-1}\underbrace{\int_{\mathbb{R}^n}(-\Delta)^{\gamma+1}u_s\bigg[\widetilde{\zeta}_{n-1}^h(x-z)-\frac{1}{|x-z|^{n-1}}\bigg]dx}_{\phi_2(z)}\notag\\
    & - \frac{1}{n-1}\underbrace{\int_{\mathbb{R}^n}(-\Delta)^{\gamma+1}u_s\frac{1}{|x-z|^{n-1}}dx}_{\phi_3(z)} 
     +\underbrace{\int_{\mathbb{R}^n}\bigg[(-\Delta)^{\gamma}u_s\widetilde{\eta}_z^h+\frac{1}{n-1}(-\Delta)^{\gamma+1}u_s\widetilde{\zeta}_{n-1}^h(x-z)\bigg]dx}_{\phi_4(z)}
       \,,
\end{align}
where $\widetilde{\zeta}^h_{n-1}$ is defined in \eqref{def_zeta} and the derivation of the constant $1/(n-1)$ appeared in \eqref{decomposition} will be introduced in the analysis of $\phi_4(z)$. 

We now investigate the properties of the terms $\phi_i(z)$ ($1\le i\le 4$) one by one. For $\phi_1(z)$, the estimate is identical with \eqref{diff_smooth}, so we have
\begin{equation}
\label{phi_1}
|\phi_1(z)| \leq  ||(-\Delta)^{\gamma-\frac{1}{2}}f||_{L^{\infty}(\mathbb{R}^n)}h\,.
\end{equation}

Next, for $\phi_2(z)$, recalling the definition of $\zeta_{n-1}^h$ in \eqref{def_zeta}, we notice the integrand vanishes if $|x-z|>h$, which leads to 
\begin{equation}
\label{phi_2}
\begin{split}
        |\phi_2(z)| = \bigg|\int_{|x-z|\leq h}(-\Delta)^{\gamma+1}u_s\bigg[\frac{1}{|x-z|^{n-1}}-\frac{1}{h^{n-1}})\bigg]dx\bigg| 
        \leq  4\pi||(-\Delta)^{\gamma+\frac{1}{2}}f||_{L^{\infty}(\Omega)}h\,.    
\end{split}
\end{equation}

To consider the term $\phi_3(z)$, we know from \cite{stinga2018user} that for a smooth function $g$, the negative order fractional Laplacian can be represented by
\begin{equation*}
	(-\Delta)^{-\gamma}g(z) = c_{n,-\gamma}\int_{\mathbb{R}^n}\frac{g(x)}{|x-z|^{n-1}}dx \quad \mbox{with~~}  c_{n,-\gamma} = \frac{\Gamma(\frac{n}{2}-\gamma)}{4^{\gamma}\pi^{\frac{n}{2}}\Gamma(\gamma)}\,.
\end{equation*}
Using this property, taking $g=(-\Delta)^{\gamma+1}u_s$ and $\gamma=1/2$, we have
\begin{equation}
\label{phi_3}
  \phi_3(z) = \frac{1}{c(n,-\frac{1}{2})} (-\Delta)^{-\frac{1}{2}}\big((-\Delta)^{\gamma+1}u_s(z)\big) = \frac{1}{c(n,-\frac{1}{2})} (-\Delta)^{\gamma+\frac{1}{2}}u_s(z) \,.
\end{equation}
To summarize, we notice that the orders of $\phi_1(z)$ and $\phi_2(z)$ are $\mathcal{O}(h)$, and the magnitude of $\phi_3(z)$ is independent of the choice of $h$.

Finally, we come to analyse $\phi_4(z)$. Since $(-\Delta)\widetilde{\zeta}_{n-1}^h(x) = - (n-1)\widetilde{\zeta}_{n+1}^h(x)$ for $|x|>h\,$,  the Green's identity leads to
\begin{equation}
\label{phi_4}
\begin{split}
    \phi_4(z) = &\int_{\mathbb{R}^n}\bigg[(-\Delta)^{\gamma}u_s \widetilde{\zeta}_{n+1}^h(x)+\frac{1}{n-1}(-\Delta)^{\gamma+1}u_s\widetilde{\zeta}_{n-1}^h(x-z)\bigg]dx\\
    =&\int_{\partial B(z,h)}(-\Delta)^{\gamma}u_s \frac{\partial}{\partial n^-}\frac{1}{|x-z|^{n-1}}dx_s + \frac{1}{h^{n+1}}\int_{B(z,h)}(-\Delta)^{\gamma}u_sdx \\
    =& \,\frac{1}{h^{n}}\int_{\partial B(z,h)}(-\Delta)^{\gamma}u_sdx_s + \frac{1}{h^{n+1}}\int_{B(z,h)}(-\Delta)^{\gamma}u_sdx \,, 
\end{split}
\end{equation}
where $n^-$ denotes the normal vector pointing towards to $z$. The simplification of the integration on $\partial B(z,h)$ comes from the definition of $\widetilde{\zeta}_{n-1}^h$, since it is a constant inside $B(z,h)$. 

We can easily observe from \eqref{phi_4} that the order of $\phi_4(z)$ is $\mathcal{O}\big(h^{-1}\big)$, which is clearly larger than $\phi_i(z)$ $(1\le i \leq 3)$. Hence we can now conclude that the dominating term in the duality product $\langle u_s , \eta_z^h \rangle_{H^{\gamma}}$ is $\phi_4(z)$, and it can be readily seen as a good approximation of the average of $(-\Delta)^{\gamma}u_s$ in a close neighborhood of the sampling point $z$. This fact will be used in the next section \ref{sec_var_case2}.

\medskip

We are now ready to justify our choice of $\alpha = n+1$ in the definition of the probing function in \eqref{def_probing}. 
Firstly, the choice of $\alpha \leq n$ is not applicable as $\zeta_\alpha^h \notin L^1(\mathbb{R}^n)$. In this case, the index function \eqref{index_fun} which involves integration in $\mathbb{R}^n$ might not be always well defined and we can not ensure its accuracy and stability of reconstruction.

Secondly, for the choice of $\alpha>n+1$, there are two reasons that this option is not preferable. The first one is that the $L^{\infty}$-norm of the auxiliary function $\zeta_{\alpha}^h$ in \eqref{def_zeta} is of order $h^{-\alpha}$. Hence, a larger choice of $\alpha>n+1$ may lead to an issue of numerical instability. Let us discuss a special case of $\alpha> n+1$ below, i.e., $\alpha = n+3$, and we will conclude that the dominating term in the duality product between $u_s$ and $\zeta_{n+3}^{h}$ is the same as the dominating term in the duality product between $u_s$ and $\zeta_{n+1}^{h}$ (the probing function we employed in DSM).
We will compare the numerical reconstructions (see Example 1, section \ref{sec_num_eg}), with $\alpha$ being $n+1$,  $n+2$, and $n+3$, to justify the choice of $\alpha = n+1$ in our DSM.

Let us now consider the case $\alpha = n+3$, that is, the probing function \eqref{def_probing} used in the index function \eqref{index_fun} is replaced by $\zeta_{n+3}^h$.
We first observe that $(-\Delta)^2\widetilde{\zeta}_{n-1}^h(x) = 1/e(n)\widetilde{\zeta}_{n+3}^h(x)$ for $x>h$ with $e(n) = 1/3(n^2-1)$. We rewrite the duality product between $u_s$ and $\zeta_{n+3}^h$ like in \eqref{decomposition}: 
{\scriptsize
\begin{align*}
\label{decomposition_alpha_n+3}
    &\langle u_s,\zeta_{n+3}^{h}(x-z) \rangle_{H^{\gamma}}
    = \underbrace{\int_{\mathbb{R}^n}(-\Delta)^{\gamma}u_s\big[\zeta_{n+3}^h(x-z) - \widetilde{\zeta}_{n+3}^{h}(x-z)\big] dx}_{\widetilde{\phi}_1(z)} -e(n)\underbrace{\int_{\mathbb{R}^n}(-\Delta)^{\gamma+2}u_s\bigg[\widetilde{\zeta}_{n-1}^h(x-z)-|x-z|^{-n+1}\bigg]dx}_{\widetilde{\phi}_2(z)}\\
    & - e(n)\underbrace{\int_{\mathbb{R}^n}(-\Delta)^{\gamma+2}u_s|x-z|^{-n+1}dx}_{\widetilde{\phi}_3(z)} 
     +\underbrace{\int_{\mathbb{R}^n}\bigg[(-\Delta)^{\gamma}u_s\widetilde{\zeta}_{n-1}^h(x-z)+e(n)(-\Delta)^{\gamma+2}u_s\widetilde{\zeta}_{n+3}^h(x-z)\bigg]dx}_{\widetilde{\phi}_4(z)}
       \,.\notag
\end{align*}}

The estimates for $\widetilde{\phi}_i(z)$ $(1\le i \le 3)$ are basically the same as the above estimates for $\phi_i(z)$ $(i\le 1 \le 3)$, expect the minor differences in replacing the order of the fractional Laplacian from $\gamma+1/2$ to $\gamma + 3/2$ in the right hand side of \eqref{phi_2} and \eqref{phi_3}. For $\widetilde{\phi}_4(z)$, we can apply the Green's identity twice to derive
\begin{equation}\label{eq:orders}
    \widetilde{\phi}_4(z) = \frac{c_5(n,\gamma)}{h^{n+1}}\int_{\partial B(z,h)}(-\Delta)^{\gamma}u_sdx_s +\frac{1}{h^{n+2}}\int_{B(z,h)}(-\Delta)^{\gamma}u_sdx + \mathcal{O}(h^{-1})\,, 
 \end{equation}
where $c_5(n,\gamma)$ is a positive constant independent of $z$ and $h$. $\widetilde{\phi}_4(z)$ now still represents the average of $(-\Delta)^{\gamma}u_s$ over the neighborhood of $z$. We can conclude that the dominating term of the index function with $\alpha = n+1$ and $\alpha = n+3$ are approximately the same.
Although the order of $\widetilde{\phi}_4(z)$ in \eqref{eq:orders} with $\alpha = n+3$ is higher than $\phi_4(z)$ in \eqref{phi_4} with $\alpha = n+1$, we point out that the difference in order has minor influence on the accuracy of the reconstruction as the magnitude of $\phi_i(z)$ and $\widetilde{\phi}_i(z)$, $i=1,2,3$, are much smaller than both of $\phi_4(z)$ and $\widetilde{\phi}_4(z)$. Moreover, as we are particularly interested in reconstruction with noisy and inadequate measurement data, it is preferable to choose a probing function that is smoother and has a smaller $L^{\infty}$-norm. 

With the above considerations, in order to maintain the appropriate regularity of the probing function as well as to minimize numerical instability, we shall, from now on, only consider a choice of $\alpha$ in the range $\alpha \in (n, n+1]$. From numerical experiments, we do not observe much difference in the quality of numerical reconstruction for any choice of $\alpha \in (n, n+1]$, and therefore for simplicity, we always choose the probing function \eqref{def_probing} with $\alpha = n + 1$ instead of some other probing functions $\zeta_{\alpha}^h$ with $\alpha \ne n+1$.

\subsection{Verification of the index function for $0<\gamma\leq (n-1)/2$ and the frequency domain representation of the probing function}\label{sec_var_case2}
In this section, we shall first verify that our proposed index function $I_{\gamma}^h$ approximately recovers the target function $f$ when $0<\gamma\le(n-1)/2$, and then present a frequency domain representation of the function $\widetilde{\eta}_z^h$. This representation reveals the fact that the application of the probing function can be regarded as applying a low pass filter on the measurement data, which helps us better understand the importance and necessity of computing the duality product between the measurement data and the chosen probing function $\eta_z^h$. 

To verify that the index function can properly recover the target function $f$, we first recall the critical motivation for direct sampling type methods in \eqref{dual_G_relation}, that $u_s= d_nR^*Rf$ can be represented by the convolution of $f$ and a fast decaying kernel function $G_x(y) = 1/|x-y|$. It can be observed that as $G_x(y)$ is very large when $x\approx y$ and is relatively small otherwise. Hence, if we are given noisy or inadequate measurement data, $u_s$ is already an approximation to the target function $f$. Furthermore, considering the reconstruction by the proposed DSM with $\gamma \leq (n-1)/2$, we next show that our method can improve the approximation to the target function $f$ compared with the approximation provided by $u_s$ without applying any pseudo-differential operator on the noisy measurement data. 

Firstly, by the definition of the fractional Laplacian through a Fourier multiplier with a frequency variable $\omega$ and the Plancherel theorem, we derive
\begin{equation*}
\label{eqn_L2_converge}
    ||(-\Delta)^{\gamma}u_s- f||_{L^2(\mathbb{R}^n)} = \int_{\mathbb{R}^n}|(|\omega|^{2\gamma-(n-1)}-1)|^2|\mathcal{F}(f)|^2d\omega\,,
\end{equation*}
where $\mathcal{F}(f)$ denotes the Fourier transform of $f$. Therefore, we have
\begin{equation}
\label{eqn_L2_converge}
||(-\Delta)^{\gamma_2}u_s- f||_{L^2(\mathbb{R}^n)}\le||(-\Delta)^{\gamma_1}u_s- f||_{L^2(\mathbb{R}^n)} \le ||u_s -f||_{L^2(\mathbb{R}^n)} \quad \mbox{for~~} 0 < \gamma_1 < \gamma_2 < \frac{n-1}{2}\,.
\end{equation}
Combining \eqref{eqn_L2_converge} with the inversion formula $(-\Delta)^{\frac{n-1}{2}}u_s = f$ in  \eqref{inverse_formula}, we notice that the $L^2$-norm of $(-\Delta)^{\gamma}u_s- f$ coverages to $0$ as $\gamma \rightarrow (n-1)/2$. We can conclude that for $\gamma \in [0, (n-1)/2]$, as $\gamma$ becomes larger, $(-\Delta)^{\gamma}u_s$ recovers the target function $f$ more accurately with $u_s$ which is already a reasonable approximation to the target function $f$.

Now we recall the alternative characterization of the index function that we obtained through the discussion following \eqref{phi_4} in the previous section \ref{sec_alternative_chara}. The dominating term of the duality product  $\langle u_s,\eta_z^{h} \rangle_{H^{\gamma}(\mathbb{R}^n)}$ is the average of $(-\Delta)^{\gamma}u_s$ at the neighborhood of the sampling point $z$. Hence, this justifies that our index function can approximately recover $f$ due to the approximation property of $(-\Delta)^{\gamma}u_s$ and the alternative characterization of the index function.

We shall remark that, although computing $(-\Delta)^{\frac{n-1}{2}}u_s$ recovers $f$ exactly in the noise free case, the choice of $\gamma = (n-1)/2$ is not preferable in applications that we mentioned in the Introduction due to numerical instability. This theoretical prediction will also be justified in the following section \ref{sec_noise} and example 1 of section \ref{sec_num_eg}.

\medskip
In the remaining part of this subsection, we would like to investigate the frequency domain representation of the probing function. The main motivation for this part is that the discussion following \eqref{eqn_L2_converge} implies that the reconstruction solely with $(-\Delta)^{\gamma}u_s$ is already an approximation to the target function $f$. Therefore, it is necessary for us to justify that the introduction of the duality product and the probing function in the new DSM are essential in recovering the target function $f$ more stably. Firstly, by the definitions \eqref{def_inner_pro} and \eqref{index_fun}, the duality product allows us to avoid applying a pseudo-differential operator directly on the noisy measurement data $u_s$. Moreover, we will now show that our choice of the probing function induces a low pass filter in the frequency domain. For this reason, it helps improve the quality of reconstruction with noisy measurement data. To justify the low pass filtering property of the probing function, we consider the numerator of the index function in the frequency domain which yields
\begin{equation}
\label{frequency_domain}
    \langle u_s,\eta_z^{h} \rangle_{H^{\gamma}} = \int_{\mathbb{R}^n}(-\Delta)^{\gamma}u_s\widetilde{\eta}_z^hdx +  \int_{\mathbb{R}^n}(-\Delta)^{\gamma}u_s[\eta_z^h - \widetilde{\eta}_z^h]dx  = \mathcal{F}^{-1}\bigg\{\mathcal{F}(\widetilde{\eta}_0^h)\mathcal{F}((-\Delta)^{\gamma}u_s)\bigg\} + \phi_1(z)\,,
\end{equation} 
where $\mathcal{F}^{-1}$ denotes the inverse Fourier transform and $\phi_1(z)$ is defined in \eqref{decomposition} which is of the order $\mathcal{O}(h)$ by \eqref{phi_1}. The representation of the duality product in \eqref{frequency_domain} implies that the reconstruction by the proposed DSM can be regarded as applying the filtering function induced by $\widetilde{\eta}_0^h$ on $(-\Delta)^{\gamma}u_s$. Therefore, we now investigate the Fourier transform of $\widetilde{\eta}_0^h$ explicitly. 

In $\mathbb{R}^2$, it follows after converting it into a Hankel transform that
\begin{align}
   &\mathcal{F}\big(\widetilde{\eta}_0^h\big)(\omega) =  \int_0^h \frac{J_0(2\pi |\omega| r)r}{h^3}dr + \int_h^{\infty}\frac{J_0(2\pi|\omega| r)}{r^2}dr\\
   = &\frac{1}{4\pi^2|\omega|^2h^3}\int_0^{2\pi|\omega|h} J_0(t)tdt  + 2\pi|\omega|\bigg[\int_0^{\infty}\frac{J_0(t)-1}{t^2}dt + \int_{2\pi|\omega|h}^{\infty}\frac{1}{t^2}dt  + \int_0^{2\pi|\omega|h}\frac{1-J_0(t)}{t^2}dt\bigg]\,.\notag  
\end{align}
We notice the following integrals regarding Bessel functions of the first kind in \cite{handbook}:
\begin{align}
 &\int_0^x J_0(t)tdt = xJ_1(x)\,, \quad \int_{0}^{\infty}\frac{1-J_0(t)}{t^2}dt = 1\,, \\
  &\int_{0}^{x} \frac{1-J_0(t)}{t^2} = -\frac{1}{x} - \bigg[1 -  \frac{\pi x}{2}\bm{H}_0(x) \bigg]J_1(x) + \bigg[\frac{x^2+1}{x} - \frac{\pi x}{2}\bm{H}_1(x)\bigg]J_0(x) \,.\notag
\end{align}
where $\bm{H}_{\nu}$ is the Struve function of order $\nu$. Combining the above computations, we conclude that
\begin{equation}
	\mathcal{F}(\widetilde{\eta}_0^{h})(\omega) = \frac{1}{h}\frac{J_1(\lambda)}{\lambda} + \frac{\lambda}{h}\bigg[\bigg(\frac{\lambda^2+1}{\lambda} - \frac{\pi \lambda}{2}\bm{H}_1(\lambda)\bigg)J_0(\lambda) - \bigg(1-\frac{\pi\lambda}{2}\bm{H}_0(\lambda)\bigg)J_1(\lambda) - 1\bigg] \,,\quad \lambda = 2\pi|\omega|h\,.
\end{equation}

In the first plot of Fig.\,\ref{low_pass}, we plot the frequency domain representation of $\widetilde{\eta}_0^h$ with respect to $|\omega|$. $h$ is chosen as $0.1$, and we suppose the data is band-limited to $1/(2h)$.  
We observe that the frequency domain representation of $\widetilde{\eta}_0^h$ decays smoothly to $0$ as $|\omega|$ becomes larger. Hence, the probing function can be approximately considered as a low pass filter since it cuts off the high frequency component and smoothes the low frequency component of the measurement data in $\mathbb{R}^2$.

For the case $n=3$, we first consider the identity regarding the Fourier transform of a radial function in $\mathbb{R}^3$:
\begin{equation*}
    \int_{\mathbb{R}^3}f(|x|)e^{-2\pi i x\cdot\omega}dx = 2\pi\int_0^{\infty}\int_0^{\pi}f(r)e^{- 2\pi i r \cos\theta|\omega|}r^2d(-\cos\theta)dr= \frac{2}{|\omega|}\int_0^{\infty}f(r)r\sin(2\pi r |\omega|) dr\,,
\end{equation*}
then we can derive
\begin{equation*}
\begin{split}
	\mathcal{F}(\widetilde{\eta}_0^h)(\omega) = &\frac{2}{|\omega|}\bigg[\int_0^{h}\frac{r}{h^4}\sin(2\pi r|\omega|)dr + \int_{h}^{\infty}\frac{1}{r^3}\sin(2\pi r|\omega|)dr\bigg]\\
 =& \frac{2}{|\omega|}\bigg[\frac{1}{h^{4}4\pi^2|\omega|^2}\int_0^{2\pi |\omega|h}t\sin(t)dt +4\pi^2|\omega|^2\int_{2\pi|\omega|h}^{\infty}\frac{\sin(t)}{t^3}dt\bigg]\\
 =& \frac{4\pi}{h}\bigg[\frac{\sin(\lambda) -\lambda \cos(\lambda)}{\lambda^3} + \lambda\bigg(\frac{-\pi+2\text{Si}(\lambda)}{4} + \frac{\sin(\lambda)}{2\lambda^2} + \frac{\cos(\lambda)}{2\lambda}\bigg)\bigg]\,, \quad \lambda = 2\pi h|\omega|\,,
\end{split}
\end{equation*}
where $\text{Si}$ is the sine integral function. We now draw the frequency domain representation of $\widetilde{\eta}_0^h$ in the second plot of Fig.\,\ref{low_pass} with respect to $|\omega|$. We also assume $h = 0.1$ and the data is band-limited to $1/(2h)$, 
and the plot implies that the function $\widetilde{\eta}_0^h$ can still be regarded as a low pass filter since it cuts off the high frequency component and smoothes the low frequency component of the measurement data in $\mathbb{R}^3$.

\begin{figure}
 \begin{subfigure}[b]{0.499\textwidth}
    \centering
    \includegraphics[scale = 0.42]{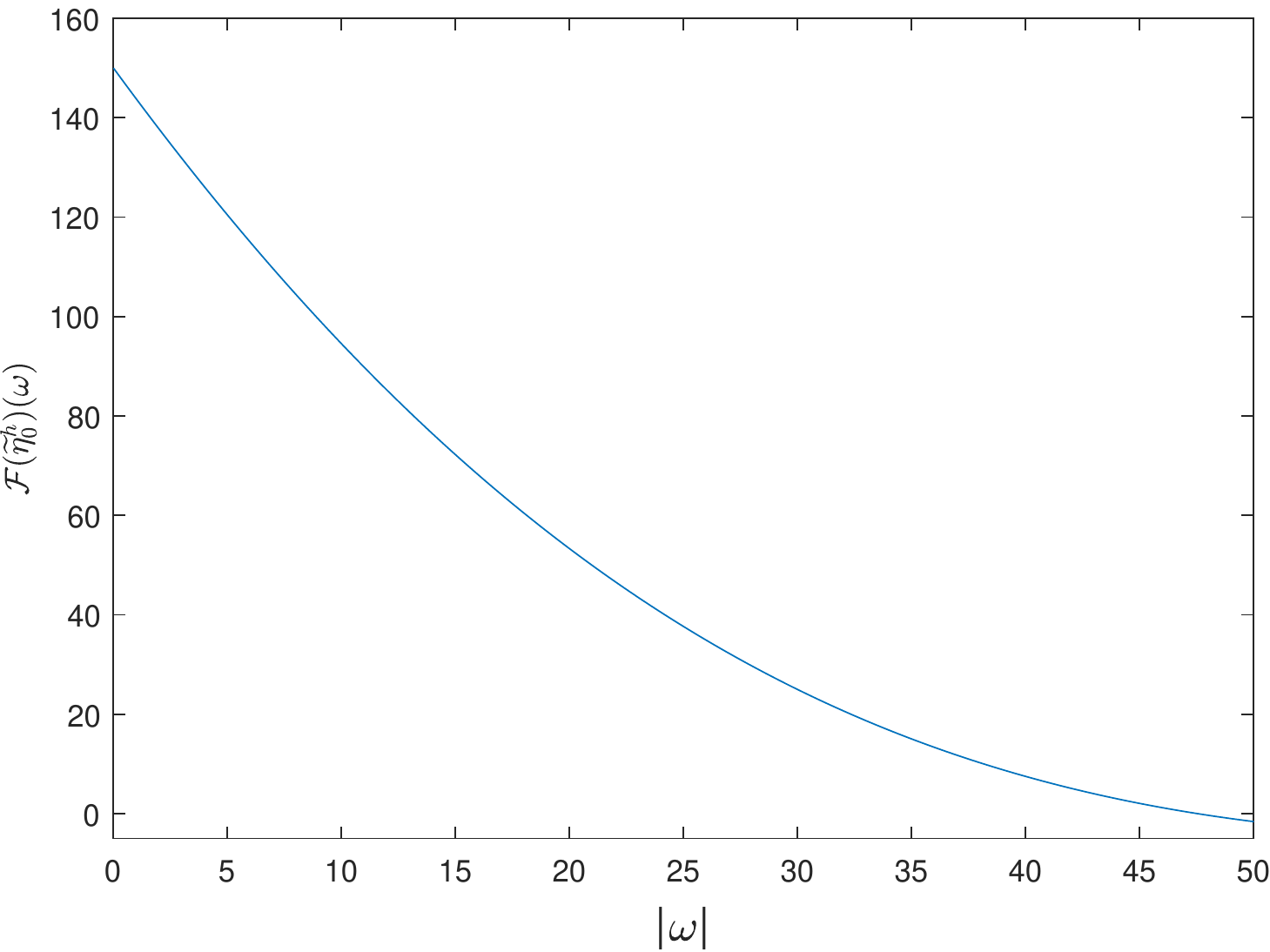}
    \end{subfigure}
 \begin{subfigure}[b]{0.499\textwidth}
    \centering
    \includegraphics[scale = 0.42]{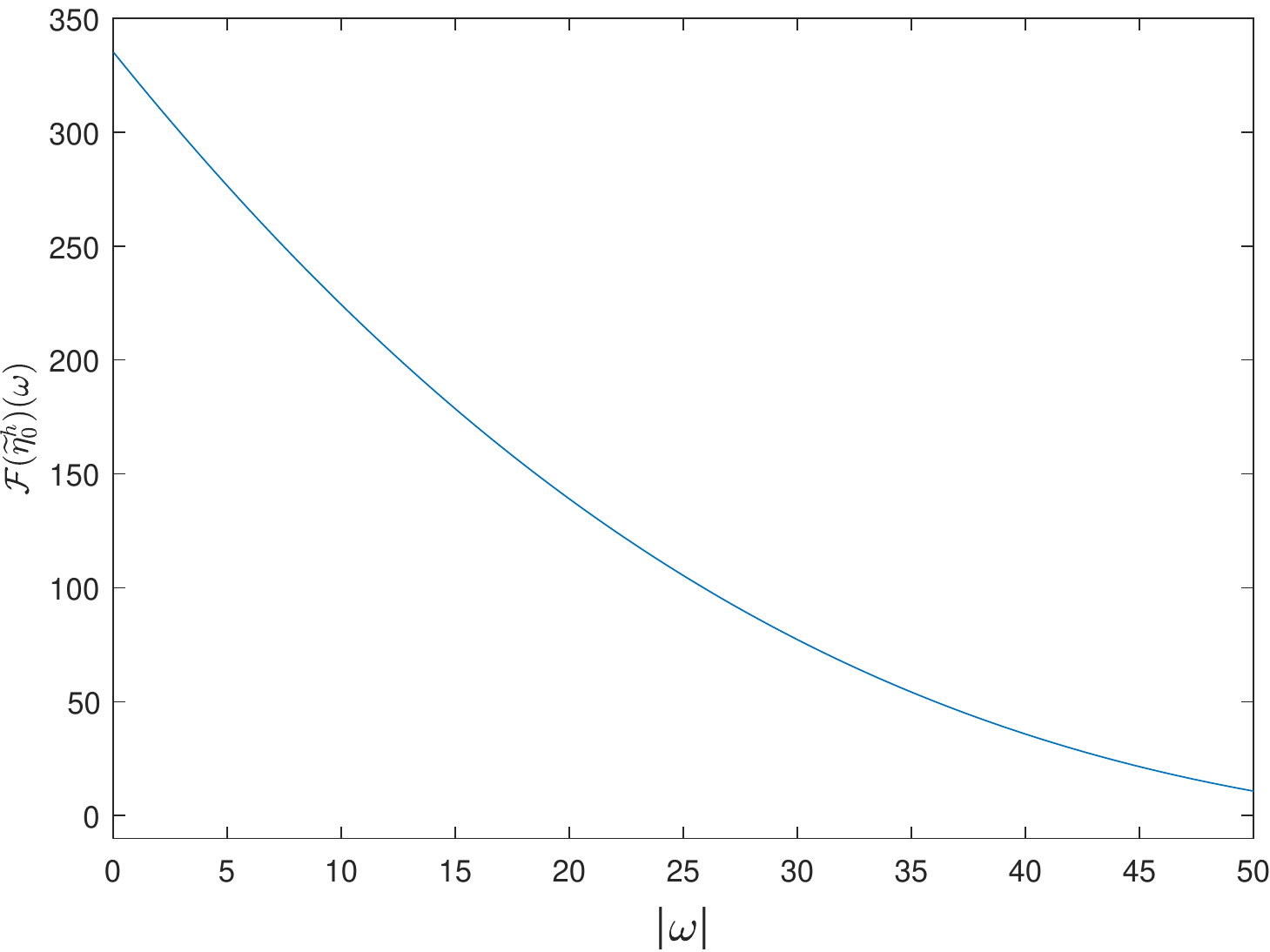}
    \end{subfigure}
     \caption{
   Frequency domain representation of $\widetilde{\eta}_0^h$ (cf.\,\eqref{def_zeta}) in $\mathbb{R}^2$ (left) and $\mathbb{R}^3$ (right), with the data being band-limited to $1/(2h)$ and $h = 0.1$.  The horizontal axis is $|\omega|$ and the vertical axis is 
$\mathcal{F}(\widetilde{\eta}_0^h)(|\omega|)$.}
\label{low_pass}
\end{figure}

To conclude, the crucial family of probing functions defined in \eqref{def_probing} for the new DSM allows our reconstruction to be very stable under highly noisy measurement data since the application of the probing function can be regarded as applying a low pass filter on the measurement data as illustrated in Fig.\,\ref{low_pass}.

\subsection{Relationship between the Sobolev scale and the variance of the index function}
\label{sec_noise}
In this subsection, we consider a particular noise model from \cite{principle_CT} that approximates the measurement process to showcase some close relationship between the Sobolev scale $\gamma$ in the duality product \eqref{def_inner_pro} and the variance of the index function for $\gamma \in (0,1)$. We only consider the case $\gamma<1$ since the $L^2$-norm of $(-\Delta)^{\gamma}\widetilde{\eta}_z^h$ is bounded for $\gamma < 1$ by part (c) of Lemma \ref{lemma_L1eta} and the boundedness of the $L^2$-norm is essential in our following discussion.

Suppose the collected data is polluted by a stationary zero-mean additive Gaussian noise, and the noise distribution is independent of each other for projections on different hyperplanes, namely, the noisy measurement takes the form:
\begin{equation}
    Rf(\theta,t) = Rf_{e}(\theta,t) + n(\theta,t)\,,
     ~~n(\theta,t) \sim N(0,\sigma_0^2)\,,
      ~~\mathbb{E}\big[n(\theta_1,t_1)n(\theta_2,t_2)\big] = \sigma_0^2\delta(\theta_1-\theta_2 )\delta(t_1-t_2)\,,
\end{equation}
where $\mathbb{E}$ represents the expectation operator, and $N(\mu,\sigma^2)$ stands for the normal distribution with mean $\mu$ and standard deviation $\sigma$. And $\delta$ is the delta measure and the subscript $e$ denotes the exact value. 

Recalling the numerator of our proposed index function in \eqref{index_fun}, we can rewrite it as
\begin{equation*}
\langle u_s,\eta_z^{h}\rangle_{H^{\gamma}(\mathbb{R}^n)}  = \int_{\mathbb{R}^n} (-\Delta)^{\gamma}u_s\widetilde{\eta}_z^h dx + \int_{\mathbb{R}^n} (-\Delta)^{\gamma}u_s [\eta_z^h - \widetilde{\eta}_z^h] dx \,.
\end{equation*}
 Since $Rf\in L^{\infty}(S^{n-1}\times \mathbb{R})$, the expectation of the product of measurements is given by 
 \begin{equation}
\begin{split}
    \mathbb{E}\big[u_s(x)u_s(y)\big] 
    =& \int_{S^{n-1}\times S^{n-1}}\bigg[Rf_{e}(\alpha,\alpha\cdot x)Rf_{e}(\beta,\beta\cdot y) + \sigma_0^2\delta(\alpha-\beta)\delta(\alpha\cdot x- \beta\cdot y)\bigg] d\alpha d\beta \\
   = &\,\,  u_e(x)u_e(y) + \big|S^{n-1}\big|^2\sigma_0^2\delta(x-y)\,,
\end{split}
\end{equation}
where $u_e$ represents the exact value, and $u_s$ is the measurement data with noise. From the above, we see the variance of the index function at $z\in \Omega\,$:
\begin{equation}
\label{def_var_z}
\begin{split}
 \sigma^2_{\gamma}(z) = \mathbb{E}\big[(I_{\gamma}^h(z))^2\big]- \mathbb{E}\big[I_{\gamma}^h(z)\big]^2\,, \quad 
  \text{ with }\,\, \mathbb{E}\big[I_{\gamma}^h(z)\big] =\frac{\int_{\mathbb{R}^n}u_s (-\Delta)^{\gamma}\widetilde{\eta}_z^h dx + \phi_1(z)}{n(z)} \,,
 \end{split}
\end{equation}
where $\phi_1(z)$ is defined in \eqref{decomposition}, and the order of $\phi_1(z)$ is $\mathcal{O}(h)$ as we know from the estimate in \eqref{phi_1}. By part (c) of Lemma \ref{lemma_L1eta}, $(-\Delta)^{\gamma}\widetilde{\eta}_z^h$ belongs to $L^2(\mathbb{R}^n)$. Then one can derive the relationship between the variance of the index function and the Sobolev scale $\gamma$:
\begin{equation}
\label{var_z}
    \begin{split}
        &\big[n(z) \sigma_{\gamma}(z)\big]^2 + \mathcal{O}(h)\\ = &\,\,\int_{\mathbb{R}^{n}\times\mathbb{R}^n}\bigg[\mathbb{E}\big[u_s(x)u_s(y)\big](-\Delta)^\gamma\widetilde{\eta}_z^{h}(x)(-\Delta)^\gamma\widetilde{\eta}_z^{h}(y)\bigg]dxdy -\bigg[\int_{\mathbb{R}^n}\mathbb{E}[u_s](-\Delta)^{\gamma}\widetilde{\eta}_z^{h}dx\bigg]^2 \\
        =&\,\,\sigma_0^2\big|S^{n-1}\big|\int_{\mathbb{R}^n}|(-\Delta)^{\gamma}\widetilde{\eta}_z^{h}|^2dx = \sigma_0^2\big|S^{n-1}\big|\int_{\mathbb{R}^n}|\omega|^{4\gamma}|\mathcal{F}(\widetilde{\eta}_z^h)(\omega)|^2 d\omega\,.
         \end{split}
\end{equation}
We now substitute the representation of the normalization term $n(z)$ defined in \eqref{index_fun} into \eqref{var_z}. In Fig.\,\ref{fig_variance_gamma}, assuming $\Omega = [-0.5,0.5]^n$  for $n=2$ (left) and $n=3$ (right) with $h = 0.025$, we plot the nature logarithm of the variance of the index function at the origin, i.e., $\ln(\sigma^2_{\gamma}(0))$, with respect to $\gamma \in [0.2,0.975]$ where the step size of $\gamma$ equals to $0.025$. The constant $\sigma_0$ in \eqref{var_z} is chosen such that $\max_{\gamma\in [0.2,0.975]}\sigma_{\gamma}^2(0) = 1$ for all $\gamma$. Our computation only considers $\gamma \geq 0.2$ is due to \eqref{eqn_L2_converge} which implies the accuracy of the reconstruction is not satisfactory for relatively small $\gamma$. From Fig.\,\ref{fig_variance_gamma}, for both reconstructions in $\mathbb{R}^2$ and $\mathbb{R}^3$, the variance of the index function increases exponentially with respect to $\gamma$. Hence, we shall not consider the possibility of very large $\gamma$, i.e., $\gamma \ge n/2$, in real applications. This conclusion is also consistent with the motivation of DSM in section \ref{sec_principle} that we expect a smaller choice of the Sobolev scale $\gamma$ will improve the robustness of the reconstruction under high level of random noise.
\begin{figure}
 \begin{subfigure}[b]{0.499\textwidth}
    \centering
    \includegraphics[scale = 0.42]{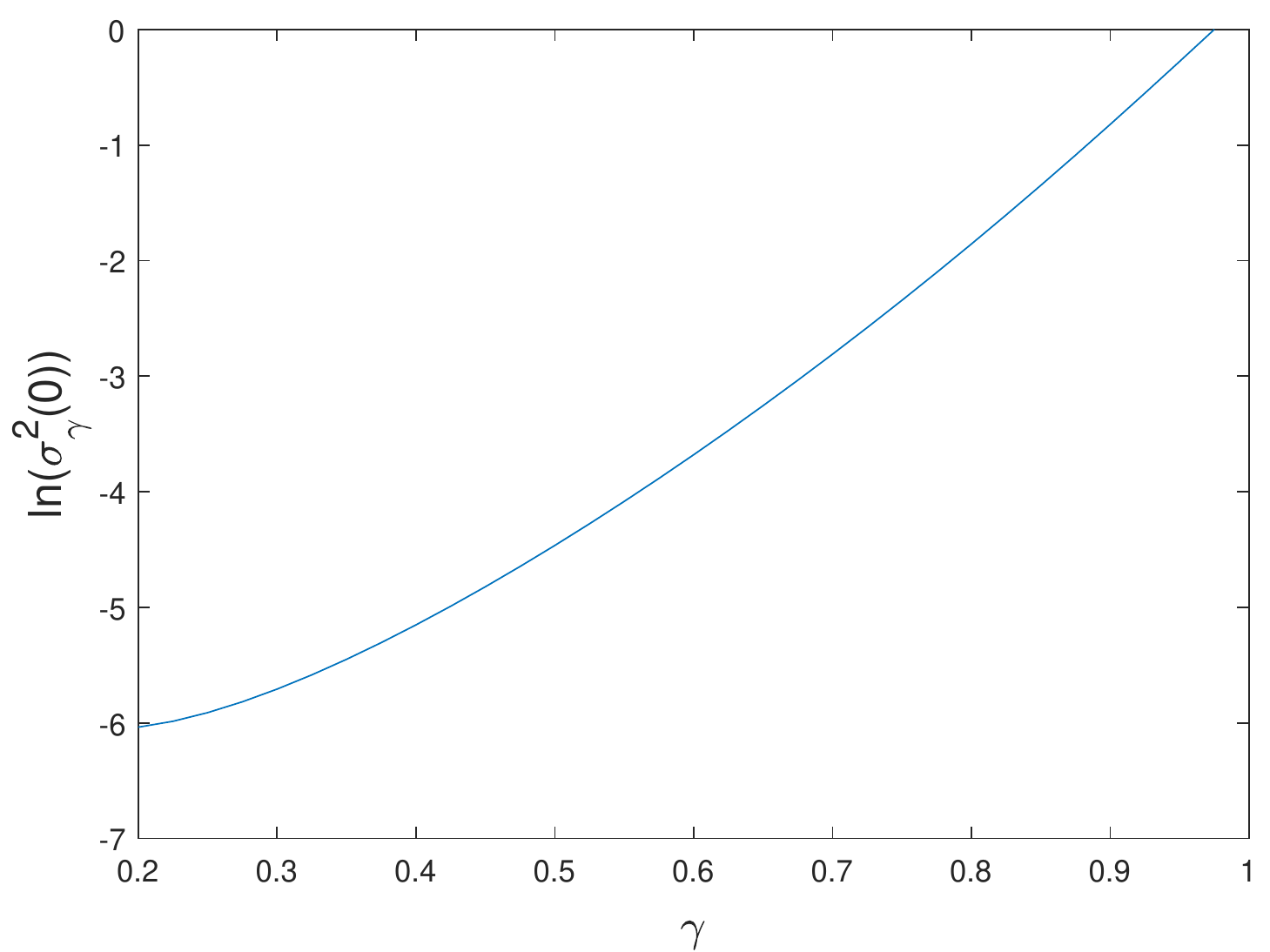}
    \end{subfigure}
 	\begin{subfigure}[b]{0.499\textwidth}
    \centering
    \includegraphics[scale = 0.42]{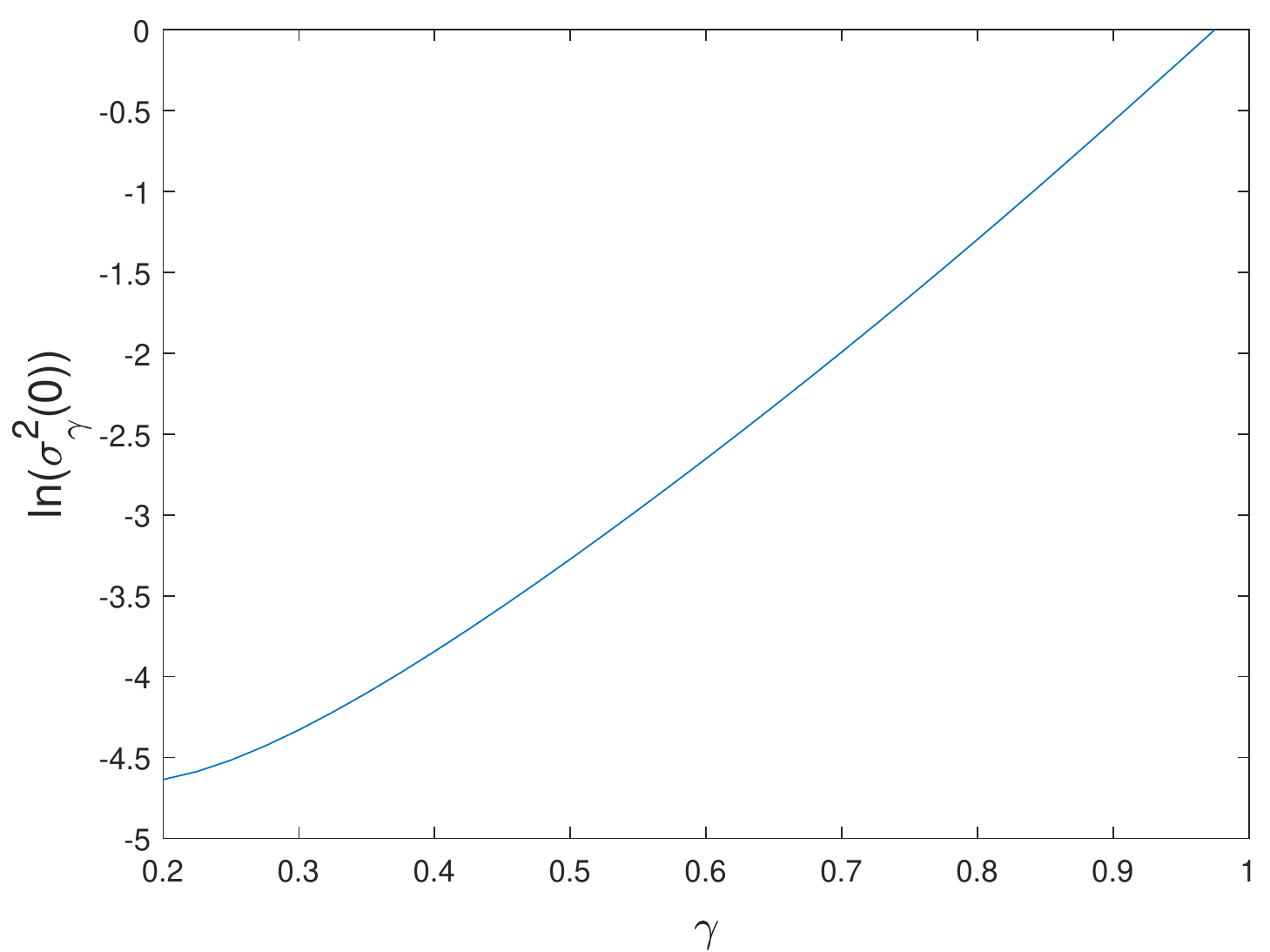}
    \end{subfigure}
    \caption{{$\ln(\sigma^2_{\gamma}(0))$ (cf.\,\eqref{def_var_z}) with respect to $\gamma \in [0.2,0.975]$ in $\mathbb{R}^2$ (left) and in $\mathbb{R}^3$(right).}}
\label{fig_variance_gamma}
\end{figure}

We end this section with a summary of our theoretical predictions on the appropriate choice of the Sobolev scale $\gamma$ for applications, based on the discussions in section \ref{sec_var_case2} and \ref{sec_noise}. For the reconstruction from noisy and inadequate measurement data that we are particularly interested in, we shall choose $\gamma$ that is relatively small considering the relationship revealed in Fig.\,\ref{fig_variance_gamma}. Moreover, for $\gamma>0$ but is much smaller than $(n-1)/2$, from \eqref{eqn_L2_converge} in section \ref{sec_var_case2}, the reconstruction results may not be accurate enough. Hence, we may prefer to choose $\gamma$ that is close to $(n-1)/2$ for our DSM, for instance, $\gamma = 0.4$ in $\mathbb{R}^2$ and $\gamma = 0.9$ in $\mathbb{R}^3$. This theoretical prediction will be verified in example 1 of section \ref{sec_num_eg}.

\section{DSMs for some other tomography problems}\label{sec_other}
\subsection{Limited angle tomography}
\label{section_Limited_Angle}
As we pointed out in section \ref{sec_verification}, the new DSM is expected to be robust against noise, due to the property of the probing function as a low pass filter and the choice of the duality product which avoids applying any pseudo-differential operator on the noisy measurement data. Therefore, we are interested in whether the DSM also performs reasonably in the limited angle tomography, which is another closely related and highly ill-posed inverse problem associated with the Radon transform. 

We will focus on the two-dimensional case when detectors are distributed in the parallel geometry. Recall the Radon transform for a smooth target function $f$:
\begin{equation*}
    Rf(\theta,t) = \int_{x\cdot \theta = t}f(x)dx_L\,, \quad \theta \in S^{1}\,, \quad t\in\mathbb{R}\,,
\end{equation*} 
where we assume $\theta \in [-\Phi,\Phi]$ for $\Phi<\pi/2$, and $s \in I_{\theta}$, with $I_{\theta}$ being the same as in \eqref{def_Itheta}. In this case, the dual of the Radon transform with limited angle measurement is given by 
\begin{equation}
\label{parral_back}
   R^*_{\Phi}g (x) := \int_{\mathbb{S}^1}g(\theta,x\cdot \theta)\mathcal{X}_{V_{\Phi}}(\theta)d\theta\,,  \quad V_{\Phi} = [-\Phi,\Phi]\cup [\pi-\Phi,\pi +\Phi] \,, 
  \end{equation}
where $\mathcal{X}_{V}(\theta) = 1$ if $\theta \in V$ and $\mathcal{X}_{V}(\theta) = 0$ otherwise.

The limited angle tomography will be very different from the case where we have measurements from all directions as in section \ref{sec_principle}.
In particular, the extra discontinuity of the characteristic function in \eqref{parral_back} will create undesirable artifacts when we apply a pseudo-differential operator on the measurement data, including the proposed fractional Laplacian.  A classification of artifacts was deduced in the work \cite{reduction_WF} with an argument using microlocal analysis and the wavefront set. 

If we employ the same index function as in equation \eqref{index_fun}, with $u_s$ replaced by $u_s^{\Phi} = 1/2 R^*_{\Phi}Rf$, the numerator of the index function becomes
\begin{equation}
   \langle u_{s}^{\Phi}, \eta_z^{h} \rangle_{H^{\gamma}(\mathbb{R}^2)} =\int_{\mathbb{R}^2}u_s^{\Phi}(-\Delta)^{\gamma}\eta_z^h(x)dx\,.
 \end{equation}

We now consider the above duality product in the frequency domain. 
Recalling the Fourier slice theorem, i.e., $\mathcal{F}_t(Rf)(\theta,t) = \mathcal{F}(f)(t\theta)$ \cite{math_of_CT}, where $\mathcal{F}_t$ is the one-dimensional Fourier transform with respect to $t$, we can rewrite $u_s^{\Phi}$ as
\begin{equation}
 \begin{split}
   u_s^{\Phi}(x) = & \int_{\mathbb{S}^1} \mathcal{F}^{-1}_t\{\mathcal{F}_tRf\}(\theta,x\cdot\theta) \mathcal{X}_{V_{\Phi}}(\theta)d\theta 
   =  \int_{\mathbb{S}^1} \int_{\mathbb{R}}\mathcal{F}(f)(\theta\tau)e^{2\pi i\tau(x\cdot \theta) }d\tau \mathcal{X}_{V_{\Phi}}(\theta)d\theta \\
   =& \int_{\mathbb{R}^2} \mathcal{F}(f)(\omega) \frac{\mathcal{X}_{V_{\Phi}}(\omega/|\omega|)}{|\omega|}e^{2\pi i\omega \cdot x} d\omega
   = \bigg(f\ast \mathcal{F}^{-1}\bigg(\frac{\mathcal{X}_{V_{\Phi}}(\omega/|\omega|)}{|\omega|}\bigg)\bigg)(x)\,.
  \end{split}
 \end{equation}
Hence, the duality product between the measurement data and the probing function with a small choice of the Sobolev scale $\gamma < 1/2$ becomes
\begin{equation}
\label{Limited_anlge_fourier}
    \langle u_{s}^{\Phi}, \eta_z^{h} \rangle_{H^{\gamma}(\mathbb{R}^2)} = \mathcal{F}^{-1}\bigg(\mathcal{X}_{V_{\Phi}}(\omega/|\omega|)|\omega|^{2\gamma-1}\mathcal{F}(\widetilde{\eta}_0^h)\mathcal{F}(f) \bigg)(z) + \int_{\mathbb{R}^2}(-\Delta)^{\gamma}u_s^{\Phi}(\widetilde{\eta}_z^h - \eta_z^h )dx \,,
\end{equation}
where the order of the second integration in the right hand side is $\mathcal{O}(h)$ with an estimate similar to \eqref{diff_smooth}. In this case, the duality product with a relatively small Sobolev scale  combining with the probing function will serve as a low pass filter in the frequency domain to improve the numerical stability of the reconstruction. 

Similarly to \cite{reduction_WF}, we will further employ the modified back projection operator to improve the accuracy of the reconstruction:
\begin{equation}
\label{def_dual_pat}
   \widetilde{R}^*_{\Phi}g (x) := \int_{\mathbb{S}^1}g(\theta,x\cdot \theta)\Psi_{V_{\Phi}}(\theta)d\theta\,, 
 \end{equation}
 where $\Psi_{V_{\Phi}}$ is defined as
\begin{equation*}
  \Psi_{V_{\Phi}}(\theta) = \begin{cases}
        1\,, \, &\theta\in[-\Phi,\Phi]\cup [\pi - \Phi,\pi)\cup[-\pi,-\pi+\Phi]\,;\\
        1-\frac{|\theta| - \Phi}{\lambda}\,, \, &\theta \in[-\Phi-\lambda,-\Phi)\cup (\Phi,\Phi+\lambda]\,;\\
        1-\frac{(\pi-\Phi)-|\theta|}{\lambda}\,, \, &\theta\in[\pi-\Phi-\lambda, \pi -\Phi) \cup (-\pi+\Phi, -\pi+\Phi+\lambda]\,;\\
        0\,, \, &\text{otherwise}\,.
    \end{cases}
\end{equation*}
where $\lambda$ is a fixed value representing the range of data that is smoothed. This modified back projection operator basically extends the original characteristic function $\mathcal{X}_{V_{\Phi}}$ to a linear function. The realization of $\Psi_{V_{\Phi}}$ is achieved by a direct extension of the measurement data. 

To conclude, the index function for reconstructing $f$ at the sampling point $z$ reads now as 
\begin{equation}
\label{index_fun_limited}
	I^{\gamma}_h(z) = \frac{\langle u_s^{\Phi}, \eta_z^h\rangle_{H^{\gamma}(\mathbb{R}^2)}}{n(z)}\,\quad\mbox{with~~} u_s^{\Phi} = \frac{1}{2}\widetilde{R}^*_{\Phi}Rf\quad \mbox{and~~} n(z) =\langle   \frac{1}{2}R^*R(\mathbbm{1}_{\Omega}),\eta_z^h \rangle_{H^{\gamma}(\mathbb{R}^2)}\,.
\end{equation}
We shall demonstrate the robustness of the DSM in this limited angle tomography numerically in  example 5 of section \ref{sec_num_eg}.

Furthermore, our above discussion applies to the case that the measurement is in the fan beam geometry when the range of measurement angles is limited. The proposed index function can be employed after replacing $u_s^{\Phi}$ in \eqref{index_fun_limited} by the back-projected data obtained from the fan beam measurement. 
 
\subsection{Exponential Radon transform}
We now discuss the application of the DSM to a special inverse problem of the exponential Radon transform. The exponential Radon transform appears in the radionuclide imaging and can be regarded as a generalization of the Radon transform \cite{exponential}. 

First, assuming $f$ is smooth and compactly supported in $\Omega$, we denote $T_{\mu}f(\theta, t)$ and $T_{\nu}^{\ast}g(x)$ as
\begin{equation}
\label{def_exp}
    T_{\mu}f(\theta,t) := \int_{\mathbb{R}^n}f(x)e^{\mu x \cdot\theta^{\perp}}\delta(t-x\cdot \theta)dx\,,\quad 
    T_{\nu}^{\ast}g(x) := \int_{S^{n-1}}g(\theta,\theta\cdot x)e^{\nu x \cdot \theta^{\perp}}d\theta\,,
\end{equation}
for $x\in \mathbb{R}^n$, $\theta\in S^{n-1}$, and $t\in\mathbb{R}$. We note that $\theta^{\perp}$ can be defined through a fixed rotation rule, for instance, rotating $\theta$ clockwise for $\pi/2$ in $\mathbb{R}^2$. The Radon transform is a special case of \eqref{def_exp} with $\mu = 0$. With a change of variable, the measurement data after back projection becomes
\begin{equation}
    u^{\mu}(x) := T_{\mu}^{\ast}T_{-\mu}f(x) = \int_{\mathbb{R}^n}\int_{S^{n-1}}f(y)e^{\mu (y-x)\cdot \theta^{\perp}}\delta(y\cdot\theta - x\cdot \theta) d\theta dy = \int_{\mathbb{R}^n}f(y)\frac{e^{\mu|x-y|}}{|x-y|}dy \,.    
\end{equation}

Considering a special case of the exponential Radon transform, that is, $n=3$ and $\mu = ik$ with $k>0\,$:
\begin{equation}
	u^{ik}(x) = T_{ik}^*T_{-ik}f(x) =\int_{\Omega}f(y)\frac{e^{ik|x-y|}}{|x-y|}dy = (f\ast \tilde{G}_0)(x) \,,
\end{equation}
where $\tilde{G}_x$ satisfies $(\Delta+k^2)\tilde{G}_x = 4\pi\delta_x$. Hence, an inversion formula for the measurement is 
\begin{equation}
	f(x) = (4\pi)^{-1}(\Delta+k^2)u^{ik}(x)\,.
\end{equation}
We observe that, with the index function defined in \eqref{index_fun}, $f(x)$ can be reconstructed by employing $\tilde{I}(x) = I_{\gamma}^h(x) + k^2u^{ik}(x)$ and all our early discussions could be extended to this scenario.

\section{Numerical implementations}\label{sec_implementation}
In this section, we introduce some numerical implementations of the proposed DSM, especially the evaluation of the duality product \eqref{def_inner_pro} between the measurement data and the probing function. 
With several strategies that are employed to reduce the computational time of our method, we will compare the computational complexity of DSM with the popular FBP method.

We first recall the definition of the index function in \eqref{index_fun}, since both $(-\Delta)^{\gamma}\eta_z^h$ and $f$ are contained in $L^2(\mathbb{R}^n)$, the numerator of $I_{\gamma}^h(z)$ can be written as 
\begin{align}
	\langle u_s, \eta_z^h \rangle_{H^{\gamma}(\mathbb{R}^n)} 
	=&\, d_n\int_{\mathbb{R}^n} R^*Rf(x)(-\Delta)^{\gamma}\eta_z^h dx 
	= d_n \int_{\mathbb{R}^n}\bigg[\int_{\mathbb{S}^{n-1}}Rf(\theta, x \cdot \theta) d\theta)	 
	\bigg](-\Delta)^{\gamma}\eta_z^h(x)dx \nonumber\\ 
	=& \,d_n\int_{\mathbb{S}^{n-1}} \bigg[\int_{\mathbb{R}^n}Rf(\theta,x\cdot\theta)  
	(-\Delta)^{\gamma}\eta_z^h(x) dx  \bigg]d\theta \,.  \label{numerical_formulation_index}
\end{align}

Now we investigate more carefully the integration of the product between the Radon transform of the target function $f$ and the fractional Laplacian of the probing function in $\mathbb{R}^n$. We first notice that if supp$\{f\} \subseteq \Omega \subseteq B(0,r_2)$, then $Rf(\theta,t) = 0$ for $|t|>r_2$. With this observation, we know the integral part with respect to $\theta$ in \eqref{numerical_formulation_index} equals to
\begin{equation}
\label{def_Htz}
\begin{split}
 &\int_{\mathbb{R}^n}Rf(\theta,x\cdot\theta)(-\Delta)^{\gamma}\eta_z^h(x)dx = \int_{\mathbb{R}} \int_{x\cdot\theta = t}(-\Delta)^{\gamma}\eta_z^h(x) Rf(\theta,t) dx dt \\
	=& \int_{|t|<r_2}R((-\Delta)^{\gamma}\eta_0^h)(\theta_0,t-z\cdot\theta)Rf(\theta,t)dt
	= \int_{|t|<r_2}(-\Delta_{t-\tau})^{\gamma}R(\eta_0^h)(\theta_0,t-\tau)Rf(\theta,t)dt\,
\end{split}
\end{equation}
for a fixed angle $\theta_0$, where we have employed in the second equality of \eqref{def_Htz} the following property regarding the Radon transform for an arbitrary radial function $g_0 \in L^2(\mathbb{R}^n)$ that satisfies $g_0(x) = g_0(|x|)$ and $g_z(x) = g(x-z)\,$:
$$R(g_z)(\theta,t) = R(g_0)(\theta_0,t-z\cdot\theta)\,$$
for a fixed angle $\theta_0$ and arbitrary angles $\theta$. The last equality in \eqref{def_Htz} holds due to the intertwining property between the fractional Laplacian and the Radon transform, which can be derived through the Fourier slice theorem, i.e., $\mathcal{F}_t(Rf)(\theta,t) = \mathcal{F}(f)(t\theta)$, and the representation of the fractional Laplacian through a Fourier multiplier.

For the notational sake, we define $H(\theta,\tau) := \int_{|t|<r_2}(-\Delta_{t-\tau})^{\gamma}R(\eta_0^h)(\theta_0,t-\tau)Rf(\theta,t)dt$. Then \eqref{numerical_formulation_index} can be computed by
\begin{equation*}
	\langle u_s, \eta_z^h\rangle_{H^{\gamma}(\mathbb{R}^n)} = d_n R^*(H(\theta, \tau))(z)\,.
\end{equation*}  

To summarize, the implementation of the DSM for reconstructing the target function $f$ consists of the following steps: 

\begin{itemize}
	\item In the off-line computation, for a set of discrete sampling point $z_j \in \Gamma_z \subset \Omega\subseteq B(0,r_2)$, we take $h = \min_{z_{i},z_{j}\in \Gamma_{z}}|z_{i}-z_{j}|$. Then we choose a set of uniformly distributed points
$$\Gamma_{\tau} = \{\tau_k = -r_2 + hk\,;\,\, hk<2r_2+h\,, \,\, k \in \mathbb{N}\} \subset \mathbb{R}\,,$$
 and compute $(-\Delta_{\tau})^{\gamma}R(\eta_0^h)(\theta_0, \tau_k)$ with $\theta_0 = 0$ and $\tau_k\in \Gamma_{\tau} \cup r_2 + \Gamma_{\tau} \cup -r_2 + \Gamma_{\tau}$. Finally, for each sampling point $z_j$, we compute $n(z_j)$ defined in \eqref{index_fun}.

\item Given the measurement data $Rf(\theta_i,t_j)$ with measurement angles $\theta_i \in \Gamma_{\theta} \subset \mathbb{S}^{n-1}$ and discrete measurement points $t_j \in \Gamma_t(\theta) \subset I_\theta \subset \mathbb{R}$ defined by \eqref{def_Itheta}:
\begin{enumerate}
	\item For each $\theta_i\in \Gamma_{\theta}$, $\tau_k \in \Gamma_{\tau}$, we compute 
	\begin{equation}
	\label{Htz_dis}
	H(\theta_i,\tau_k) = h\sum_{j}(-\Delta_{t-\tau})^{\gamma}R(\eta_0^h)(\theta_0, t_j - \tau_k)Rf(\theta_i, t_j) \,;
	\end{equation} 
	\item For each sampling point $z_j$, we apply the back-projection operator $R^*$ on $H(\theta_i,\tau_k)$ to obtain $\langle u_s, \eta_{z_j}^h\rangle_{H^{\gamma}(\mathbb{R}^n)}$. Then we divide it by $n(z_j)$ to obtain the index function $I^h_{\gamma}(z_j)$ which recovers the target function $f(z_j)$.
\end{enumerate}
\end{itemize}

{\bf Comparison between computational complexities of DSM and FBP}. 
We now recall the implementation of the FBP method, which applies the ramp filter composed with a proper low pass filter on the $t$ variable of $Rf(\theta,t)$, and then back-projecting it to recover $f$. In general, for the standard case that measurement points $t_j$ are uniformly distributed, the step of filtering in an FBP reconstruction requires $\mathcal{O}(N\log N)$ flops for $N$ discretization points. Considering the computational complexity of our DSM, except for the step of back-projection that we share with the FBP method, the method only requires two extra steps. The first is to compute $H(\theta,\tau)$ with \eqref{Htz_dis}. In this step, we can observe that the matrix representation of $(-\Delta_{t-\tau})^{\gamma}R(\eta_0^h)(\theta_0,t_j-\tau_k)$ is a Toeplitz matrix since the value of entries only depend on $t_j-\tau_k$. Hence, with the fast Fourier transform, the computation of \eqref{Htz_dis} costs $\mathcal{O}(N\log N)$ flops. The second extra step required by DSM is to divide the duality product by the normalization term $n(z_j)$ which only costs $\mathcal{O}(N)$ flops. 
To conclude, the overall computational complexity of the DSM is of the same order as the traditional FBP method. However, as we shall observe from a series of numerical experiments in section \ref{sec_num_eg}, DSM provides more robust and accurate reconstructions.
We like to mention that the traditional methods which yield reasonable reconstructions in those challenging situations have much higher computational complexities, for instance, they often involve minimizing a functional with certain regularization \cite{sparse_tomo}.

\section{Numerical experiments}\label{sec_num_eg}
A series of numerical experiments are carried out in this section to illustrate the robustness and accuracy of the novel DSM for a number of representative applications in two and three dimensions.
For two-dimensional experiments, we take the sampling domain $\Omega = [-0.5,0.5]\times [-0.5,0.5]$, with the mesh size {$h = 2^{-1}\times 10^{-2}$. Detectors are placed in parallel arrays and the angular increment is {$0.25$} degree except for Example 4.

The Radon transform of the target function $f$ supported in $\Omega$ is available at a set of discrete angles $\Gamma_\theta$, which are uniformly distributed in $[-\pi/2,\pi/2)$ (except for Example 5) and at discrete points $\Gamma_t(\theta) \subset I_{\theta}$ defined by \eqref{def_Itheta}. Two original images are examined, with the first one being an image containing four objects with different shapes, and the second one being the classical head phantom image.

A stationary additive Gaussian random noise is added to the Radon transform of $f$ in all experiments: 
\begin{equation}\label{eq:data}
    Rf_s(\theta,t) := Rf_e(\theta,t)+\epsilon \delta\,, \quad \theta\in \Gamma_{\theta}\,,\quad t\in \Gamma_t(\theta)\,,
\end{equation}
where $\epsilon$ is the standard normal distribution, $Rf_e$ is the exact data, and $\delta = \text{mean}(Rf_e)\times(\text{noise level})$. We will also investigate the reliability of the proposed DSM under another type of random noise, i.e., the 'salt and pepper' noise, which corresponds to the dysfunction of detectors. This type of noise can be caused by mechanical issues or sudden disturbances on detectors, and as a result, a certain portion of data will be corrupted. The noise level, in this case, represents the percentage of the measurement data that is incorrect, and the incorrect data is randomly set to be the minimum or the maximum of all available data in a particular experiment.

In each of the following examples, we first generate the exact measurement data $Rf_{e}(\theta,t)$ and then impose the noise on the exact data as in \eqref{eq:data} to obtain $Rf_{s}(\theta,t)$. Then the index function \eqref{index_fun} is evaluated with the basic computational strategies introduced in section \ref{sec_implementation}. 
To compare the DSM with some existing methods, we choose the FBP method with the 'Hamming' filter for reconstructing the image in the \textsc{MATLAB} R2019B. This corresponds to adding the Hamming window on the classical ramp filter.

To compare the numerical reconstruction qualities, we compute the discrete $L^2$-norm error and $L^{\infty}$-norm error of the reconstruction. We write by $I_{DSM}$ and $I_{FBP}$ the images reconstructed by the new DSM and the FBP method, respectively, and by $I_{O}$ and $\overline{I}_{O}$ the original image and its average in $\Omega$, respectively. We further define
\begin{equation}
\label{def_L2_SNR}
\Err^2_{DSM} :=   \frac{||I_{DSM} -I_{O}||_{2}}{||I_O||_2}\,,\quad 
\Err^{\infty}_{DSM} :=   \frac{||I_{DSM} -I_{O}||_{\infty}}{||I_O||_\infty}\,,
\end{equation}
where $||\cdot||_{2}$ and $||\cdot||_{\infty}$ denote the discrete $L^2$-norm and $L^{\infty}$-norm. Similar quantities are also computed for the FBP method and denoted by $\Err^2_{FBP}$ and $\Err^{\infty}_{FBP}$.

To fairly compare the reconstruction quality of DSM and FBP, we plot the normalized index function $\tilde{I}_{DSM}(z) = I_{DSM}(z)/\max_{y\in \Omega}|I_{DSM}(y)|$ and $\tilde{I}_{FBP}(z) = I_{FBP}(z)/\max_{y\in \Omega}|I_{FBP}(y)|$ in each plot. 
In all the figures, images in the same row are generated with the same measurement data to demonstrate certain numerical phenomena; Plots with subtitles 'DSM', 'FBP', and '$f(x)$' plot $\tilde{I}_{DSM}(z)$, $\tilde{I}_{FBP}$, and the original image being recovered.

\textbf{Example 1.} 
We examine in this example the influence of the Sobolev scale $\gamma$ (cf.\,\eqref{def_inner_pro}) and parameter $\alpha$ (cf.\,\eqref{def_probing}) on the reconstruction to validate our previous theoretical predictions and also to provide some important practical guidance on their choice for the subsequent examples.  Reconstructions by DSM (with $\gamma = 0.3$, $0.4$, $0.5$, $0.6$) and reconstructions by DSM (with $\alpha=3$, $4$, $5$) are shown in Fig.\,\ref{example1}. 

We compute the four images in the first row of Fig.\,\ref{example1} with the same measurement data under different choices of $\gamma =0.3$, $0.4$, $0.5$, and $0.6$ with $\alpha = 3$ and $20\%$ additive Gaussian noise. We may observe that the reconstruction is sharper but less stable as $\gamma$ increases. Denoting $\Err^2_{\gamma = \lambda}$ as the discrete $L^2$-norm error of the reconstruction by DSM with $\gamma = \lambda$ as in \eqref{def_L2_SNR}, then the corresponding reconstruction errors are given by
$$
\Err^2_{\gamma = 0.3} = 0.171 \,,\quad
\Err^2_{\gamma = 0.4} = 0.135 \,,\quad
\Err^2_{\gamma = 0.5} = 0.153 \,,\quad
\Err^2_{\gamma = 0.6} = 0.342 \,.
$$
The above numerical results follow from our previous theoretical conclusions at the end of section \ref{sec_noise} that we expect a smaller $\gamma$ will provide more stable reconstruction results with noisy measurement data, i.e., comparing $\gamma = 0.4$, $0.5$, and $0.6$; but at the same time, the reconstruction is not accurate enough for $\gamma$ that is too small, i.e., comparing $\gamma = 0.3$ and $0.4$. Hence, for the following examples, we will mainly employ $\gamma = 0.4$ to enhance both the numerical stability and the accuracy of the reconstruction. Moreover, to illustrate the feasibility of the proposed DSM with other choices of $\gamma$, we also employ $\gamma = 0.55$ in the second case of example 2 to demonstrate that our method performs stably for a wide range of $\gamma$  due to the choice of the probing function which serves as a low pass filter as we discussed in section \ref{sec_var_case2}.

Next, we would like to justify our preference of choosing $\alpha = n+1$ ($\alpha = 3$ in $\mathbb{R}^2$) fo reconstruction. We compute the first three images in the second row with the same measurement data under different choices of $\alpha = 3$, $4$, and $5$ with $\gamma = 0.4$ and $30\%$ additive Gaussian noise. Denoting $\Err^2_{\alpha = \lambda}$ as the discrete $L^2$-norm error of the reconstruction with $\alpha = \lambda$, then the corresponding reconstruction errors are given by
$$
\Err^2_{\alpha = 3} = 0.151 \,,\quad
\Err^2_{\alpha = 4} = 0.173 \,,\quad
\Err^2_{\alpha = 5} = 0.189 \,
$$
We observe that the reconstruction becomes less accurate as $\alpha$ becomes larger under high level Gaussian noise. The above observation echoes with the analysis in section \ref{sec_alternative_chara}. This suggests the choice of $\alpha = 3$ in most real applications, namely, $\alpha = n+1$ in $\mathbb{R}^n$ as justified in section \ref{sec_alternative_chara}.

\begin{figure}
     \begin{subfigure}[b]{0.245\textwidth}
                 \centering
                 \includegraphics[scale = 0.23]{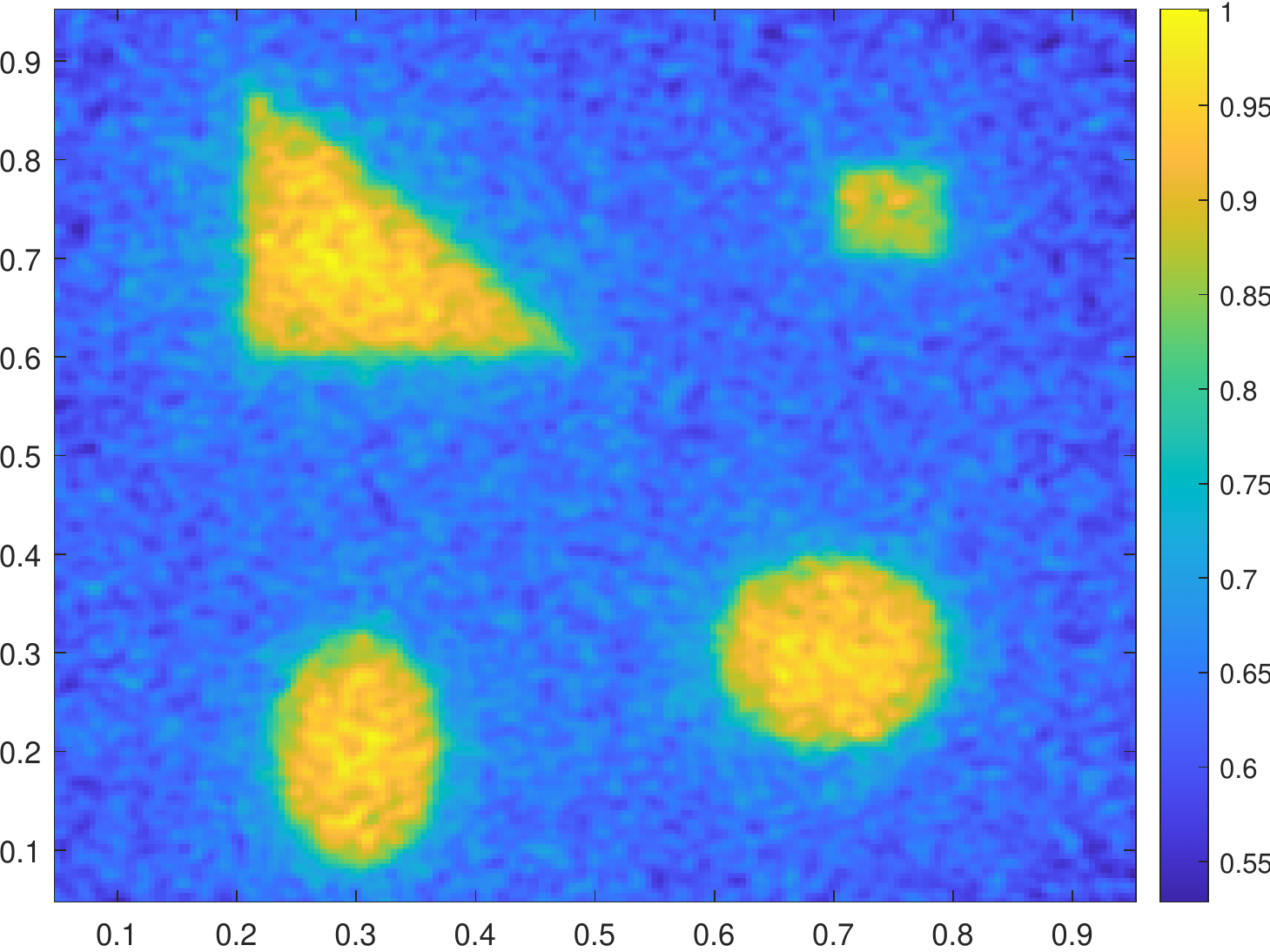}
                 \caption{DSM: $\gamma = 0.3$.}
         \end{subfigure}
     \begin{subfigure}[b]{0.245\textwidth}
                 \centering
                 \includegraphics[scale = 0.23]{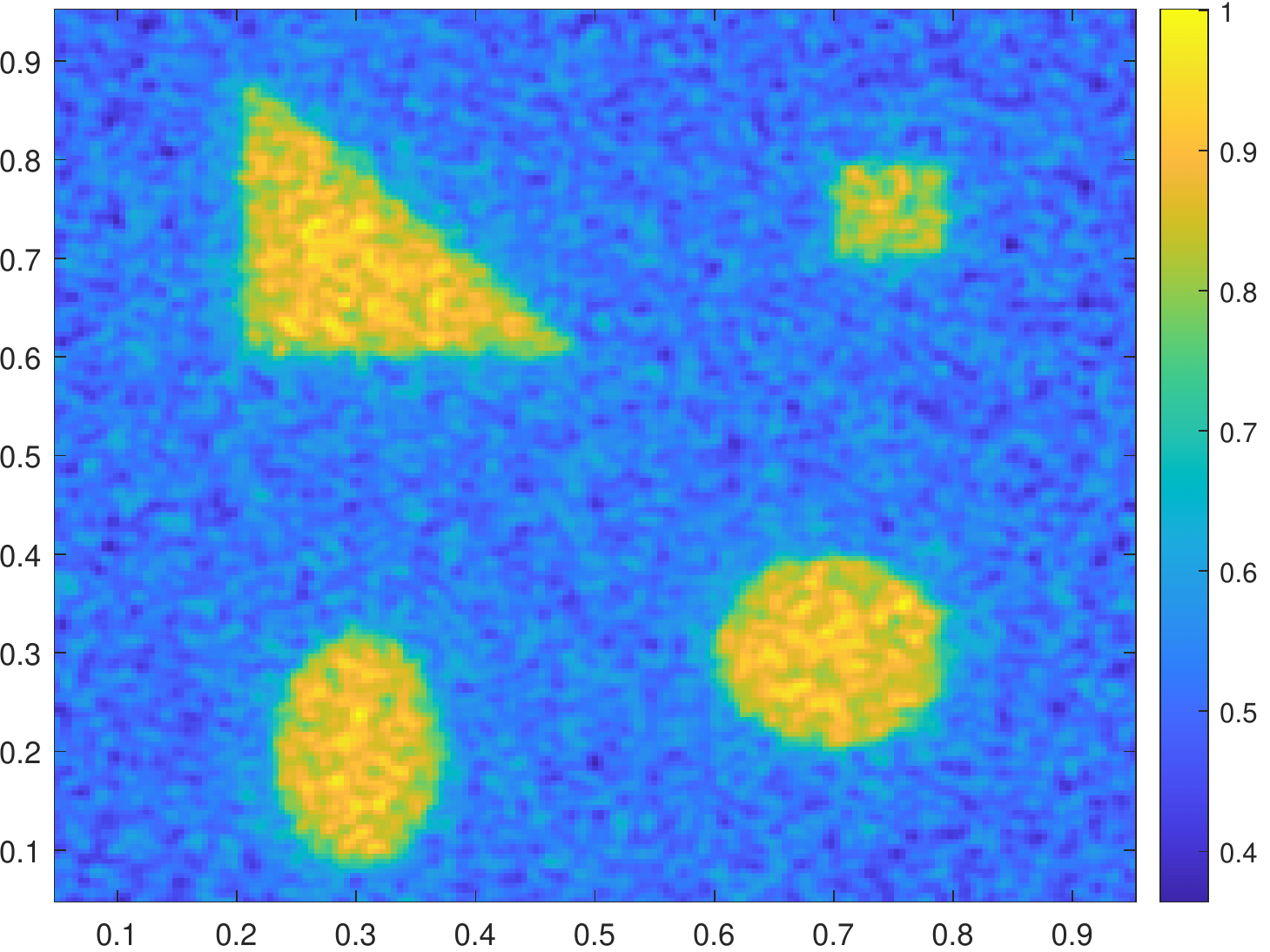}
                 \caption{DSM: $\gamma = 0.4$.}
         \end{subfigure}
   \begin{subfigure}[b]{0.245\textwidth}
                 \centering
                 \includegraphics[scale = 0.23]{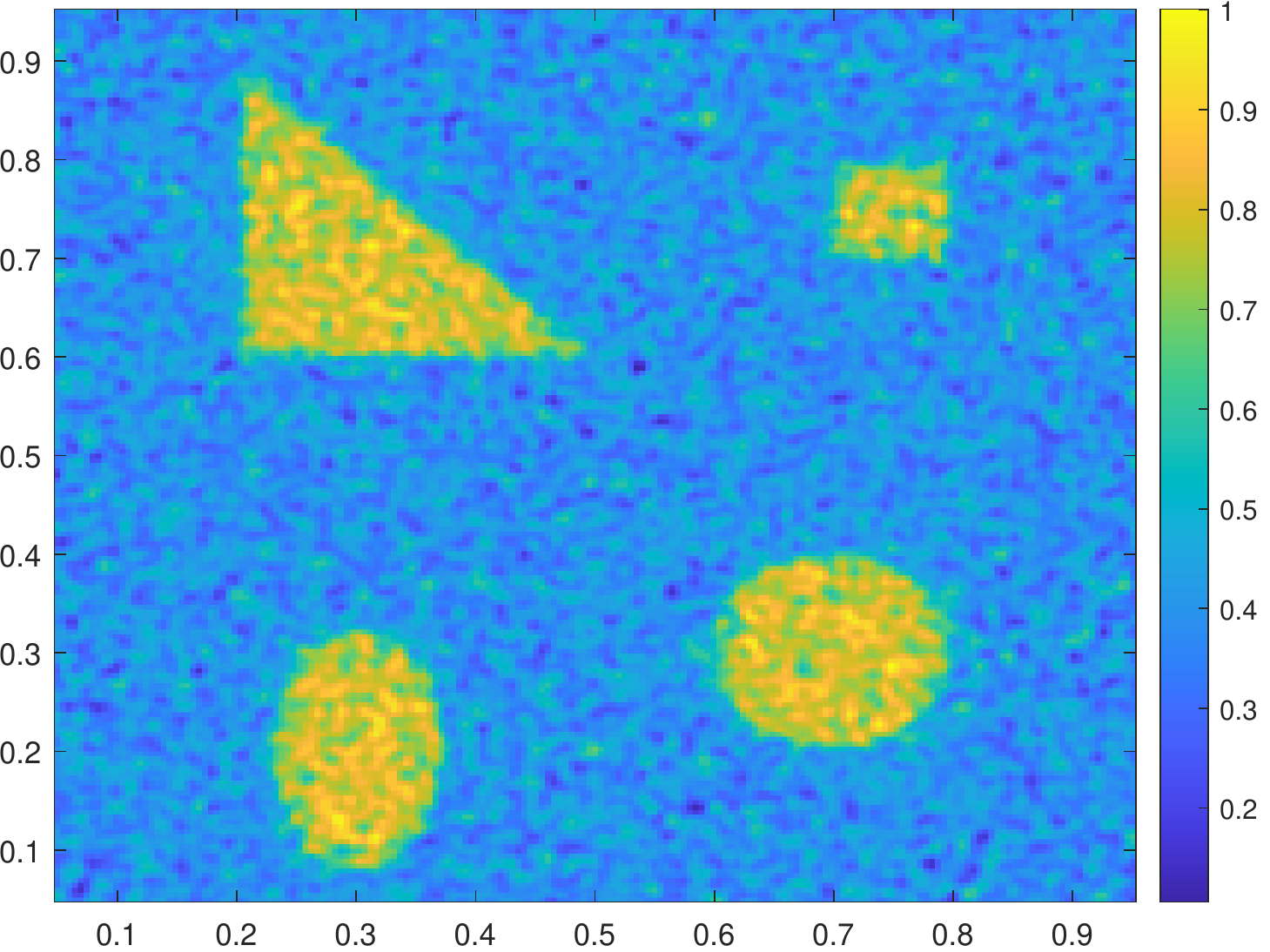}
                 \caption{DSM: $\gamma = 0.5$.}
         \end{subfigure}
   \begin{subfigure}[b]{0.245\textwidth}
                 \centering
                 \includegraphics[scale = 0.23]{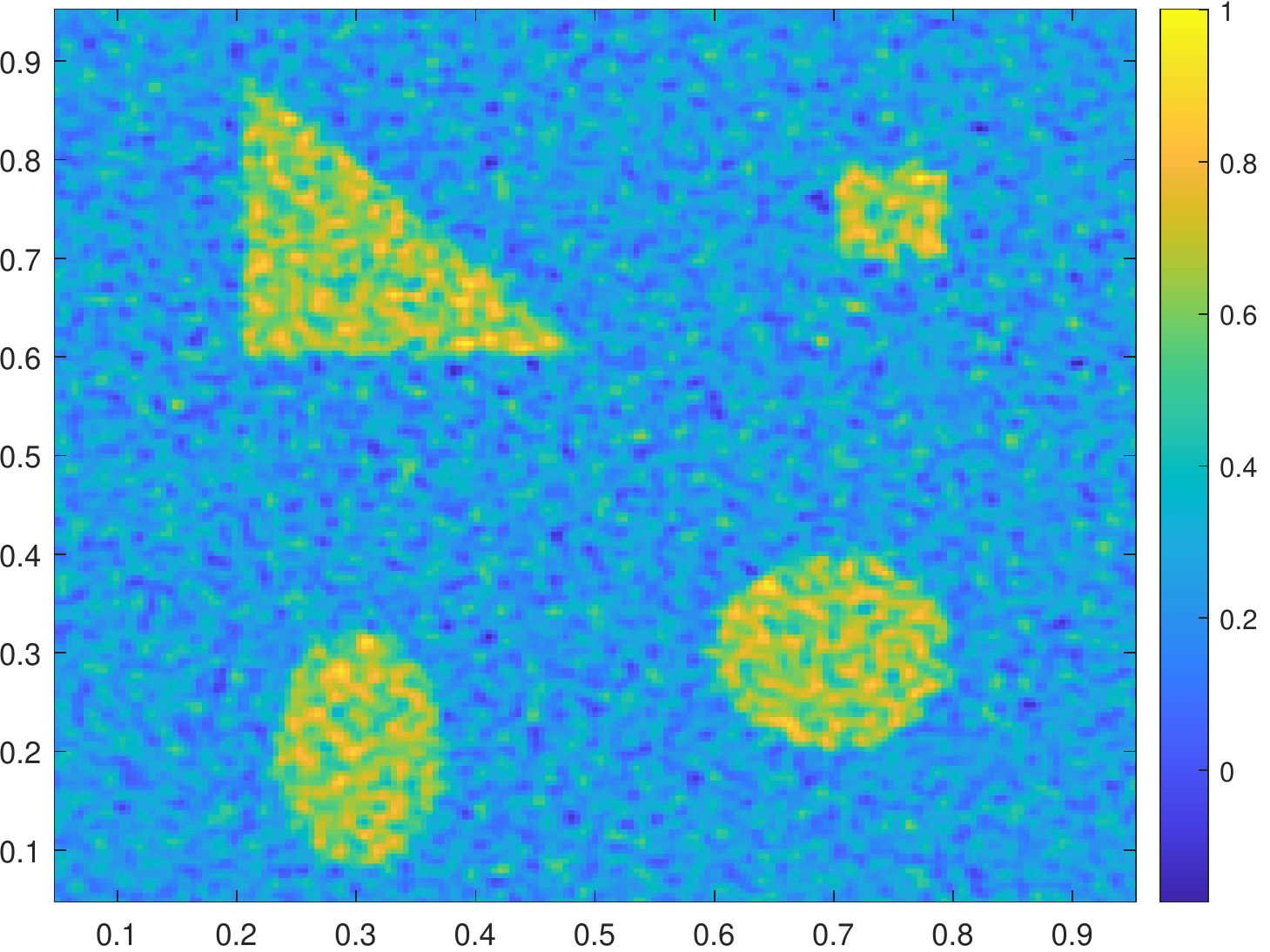}
                 \caption{DSM: $\gamma = 0.6$.}
   \end{subfigure}\newline
   \begin{subfigure}[b]{0.245\textwidth}
                 \centering
                 \includegraphics[scale = 0.23]{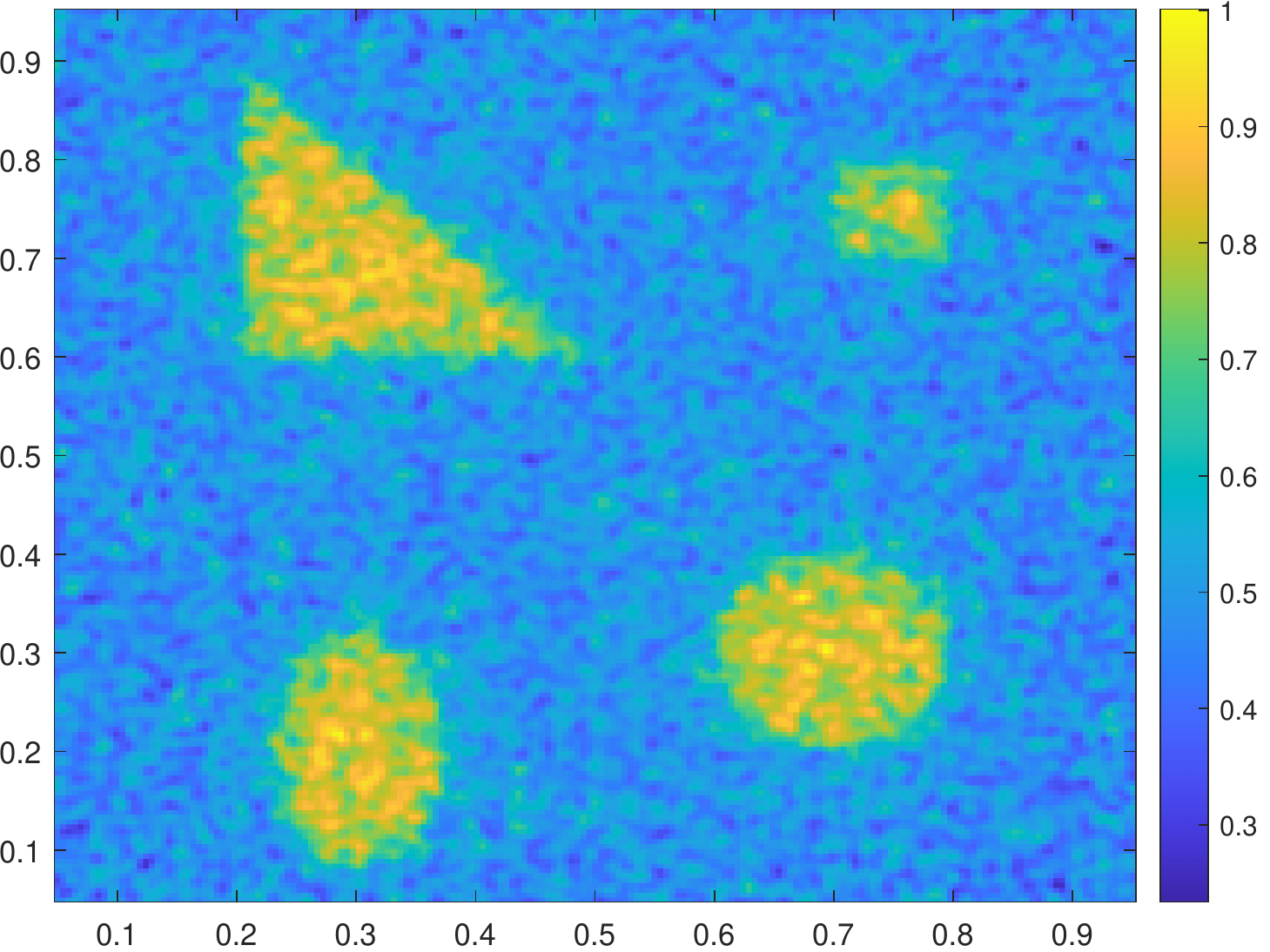}
                 \caption{DSM: $\alpha = 3$.}
         \end{subfigure}
   \begin{subfigure}[b]{0.245\textwidth}
                 \centering
                 \includegraphics[scale = 0.23]{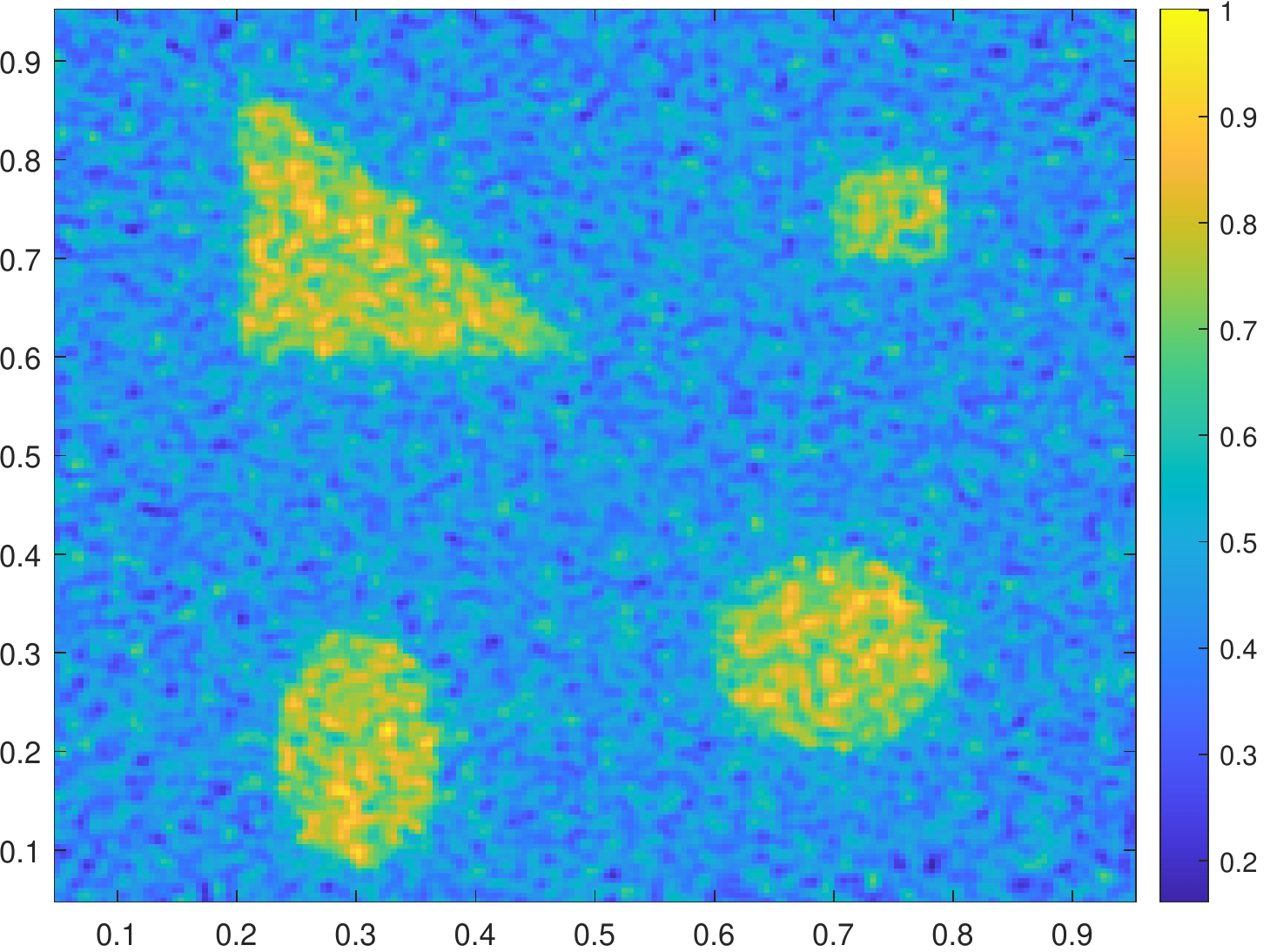}
                 \caption{DSM: $\alpha = 4$.}
         \end{subfigure}
   \begin{subfigure}[b]{0.245\textwidth}
                 \centering
                 \includegraphics[scale = 0.23]{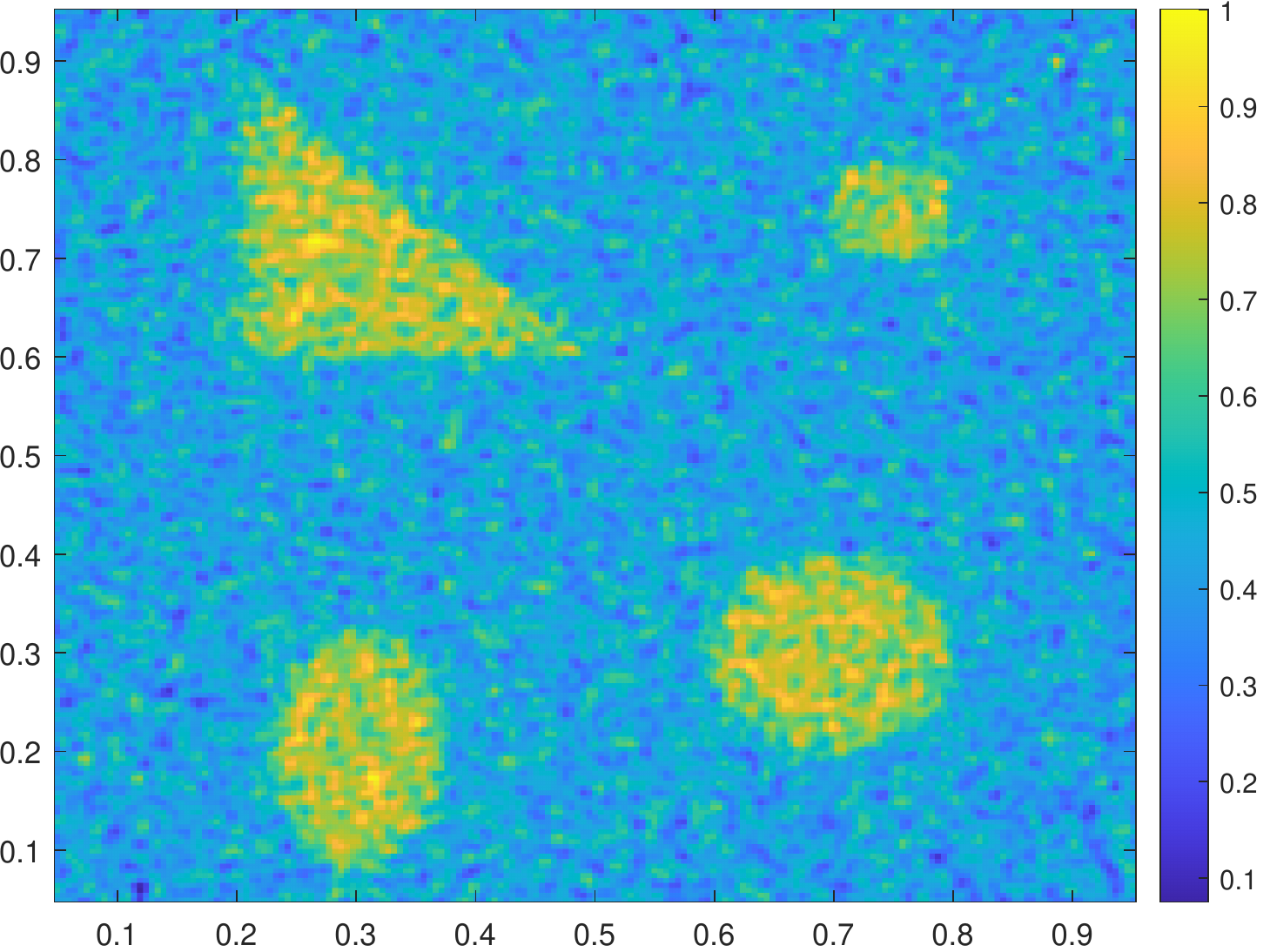}
                 \caption{DSM: $\alpha = 5$.}
       \end{subfigure}
    \begin{subfigure}[b]{0.245\textwidth}
                 \centering
                 \includegraphics[scale = 0.23]{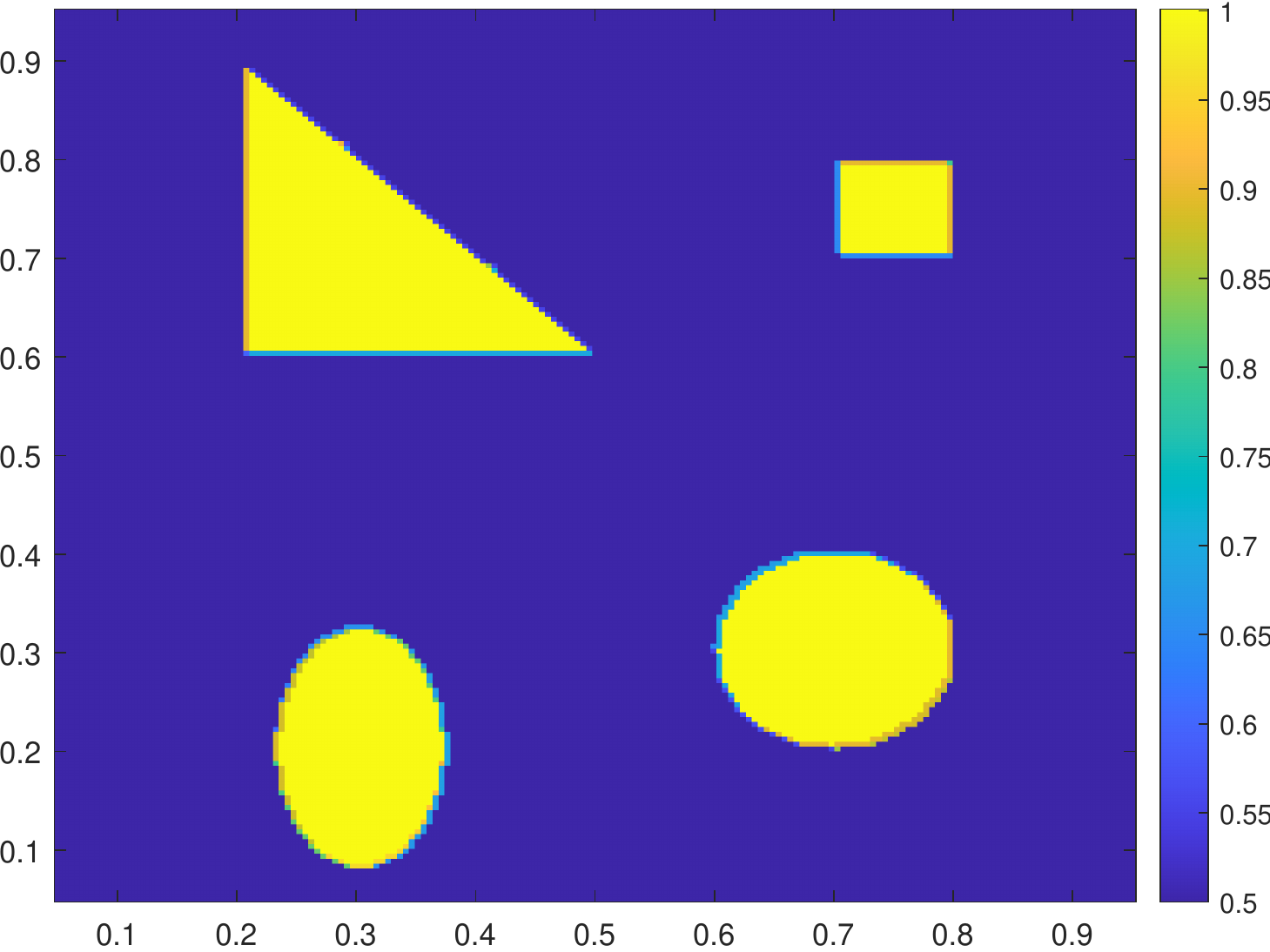}
                 \caption{$f(x)$.}
       \end{subfigure}
    \caption{Example 1. Influence of choices of $\gamma$ and $\alpha$: reconstruction by DSM under $20\%$ additive Gaussian noise and $\alpha = 3$ with $\gamma=0.3$, $0.4$, $0.5$, and $0.6$ (first row); reconstruction by DSM under $30\%$ additive Gaussian noise and $\gamma = 0.4$ with $\alpha=3$, $4$, $5$ and the original image (second row).}
\label{example1}
\end{figure}

\textbf{Example 2.} 
This example involves additive Gaussian noise in the data. The reconstructions by DSM (with $\gamma = 0.4$ in the first row and with $\gamma = 0.55$ in the second row) and FBP are shown in Fig.\,\ref{example2}. The corresponding reconstruction errors are given respectively by
\begin{equation*}
\Err^2_{DSM} =        0.135 \,,\quad 
\Err^2_{FBP}=        0.293 \,,\quad 
\Err^{\infty}_{DSM} = 0.143   \,,\quad 
\Err^{\infty}_{FBP}=   0.245   \,,
 \end{equation*}
for the reconstructions in the first row with the noise level being $20\%$, and by 
\begin{equation*}
\Err^2_{DSM}=     0.237 \,, \quad 
\Err^2_{FBP}=    0.279 \,,\quad 
\Err^{\infty}_{DSM} = 0.202   \,,\quad 
\Err^{\infty}_{FBP}=   0.223   \,,
\end{equation*}
for the reconstructions in the second row with the noise level being $20\%$.

From the numerical reconstructions, we can observe that the DSM is very robust against strong Gaussian noise in the measurement data. And based on the $L^2$-norm error and the $L^{\infty}$-norm error of the reconstruction, we can see that the DSM performs obviously better than FBP.
\begin{figure}
    \begin{subfigure}[b]{0.33\textwidth}
                 \centering
                 \includegraphics[scale = 0.301]{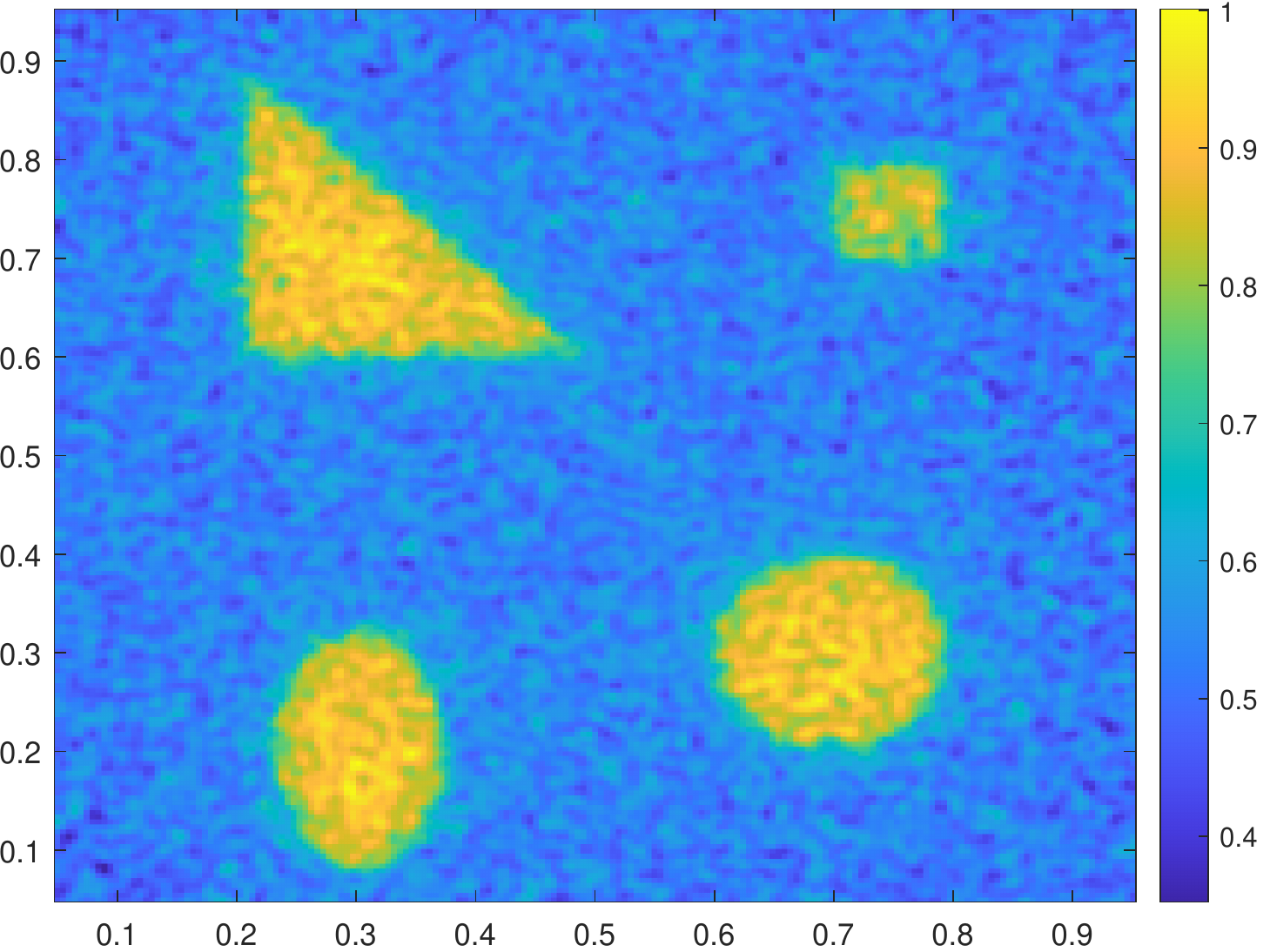}
                 \caption{DSM: $\gamma = 0.4$.}
      \end{subfigure}
   \begin{subfigure}[b]{0.33\textwidth}
      \centering
                 \includegraphics[scale = 0.301]{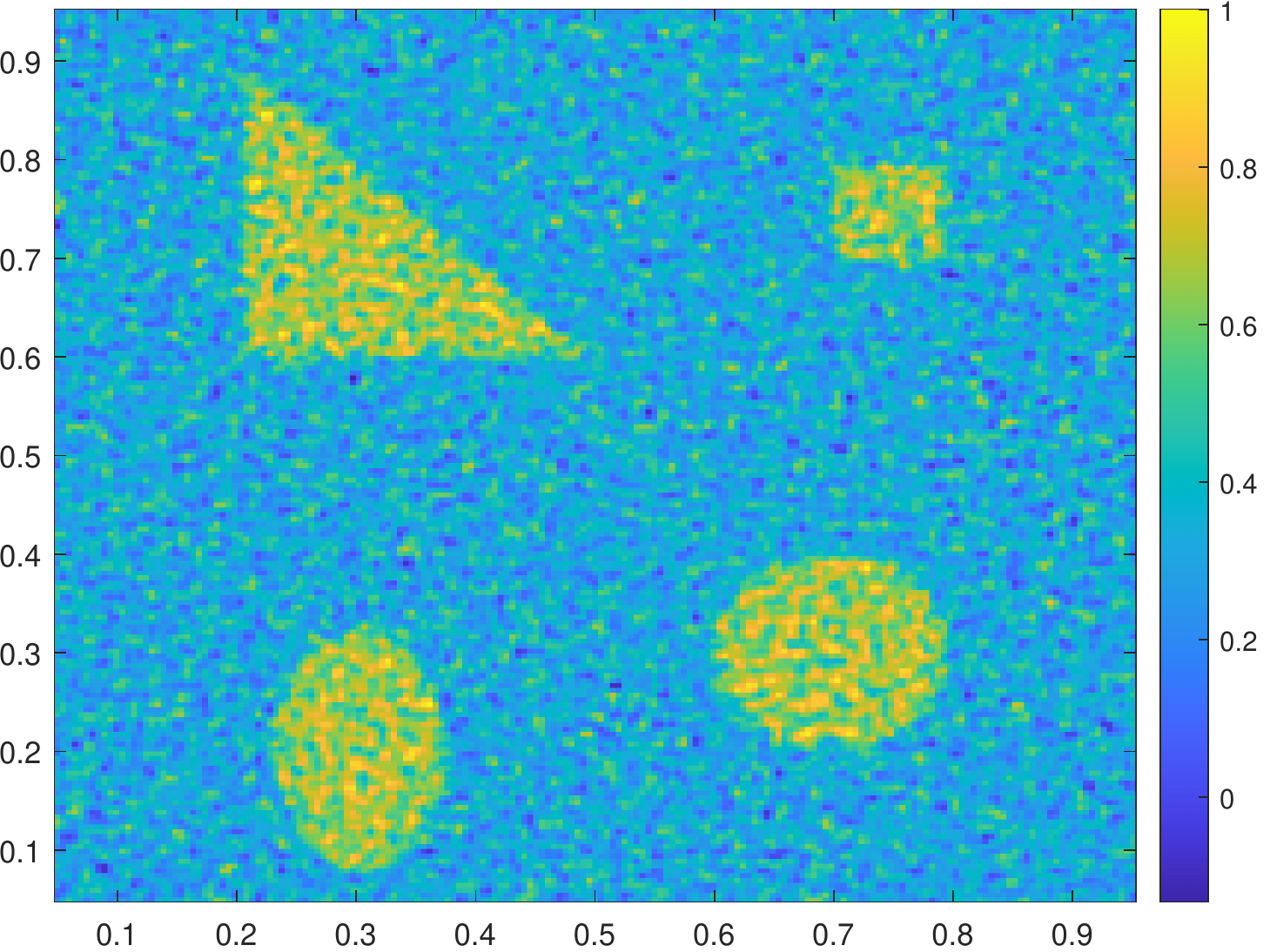}
                 \caption{FBP.}
         \end{subfigure}
   \begin{subfigure}[b]{0.33\textwidth}
                 \centering
                 \includegraphics[scale = 0.301]{example2afx.pdf}
                 \caption{$f(x)$.}
         \end{subfigure}
    \newline     \begin{subfigure}[b]{0.33\textwidth}
                 \centering
                 \includegraphics[scale = 0.301]{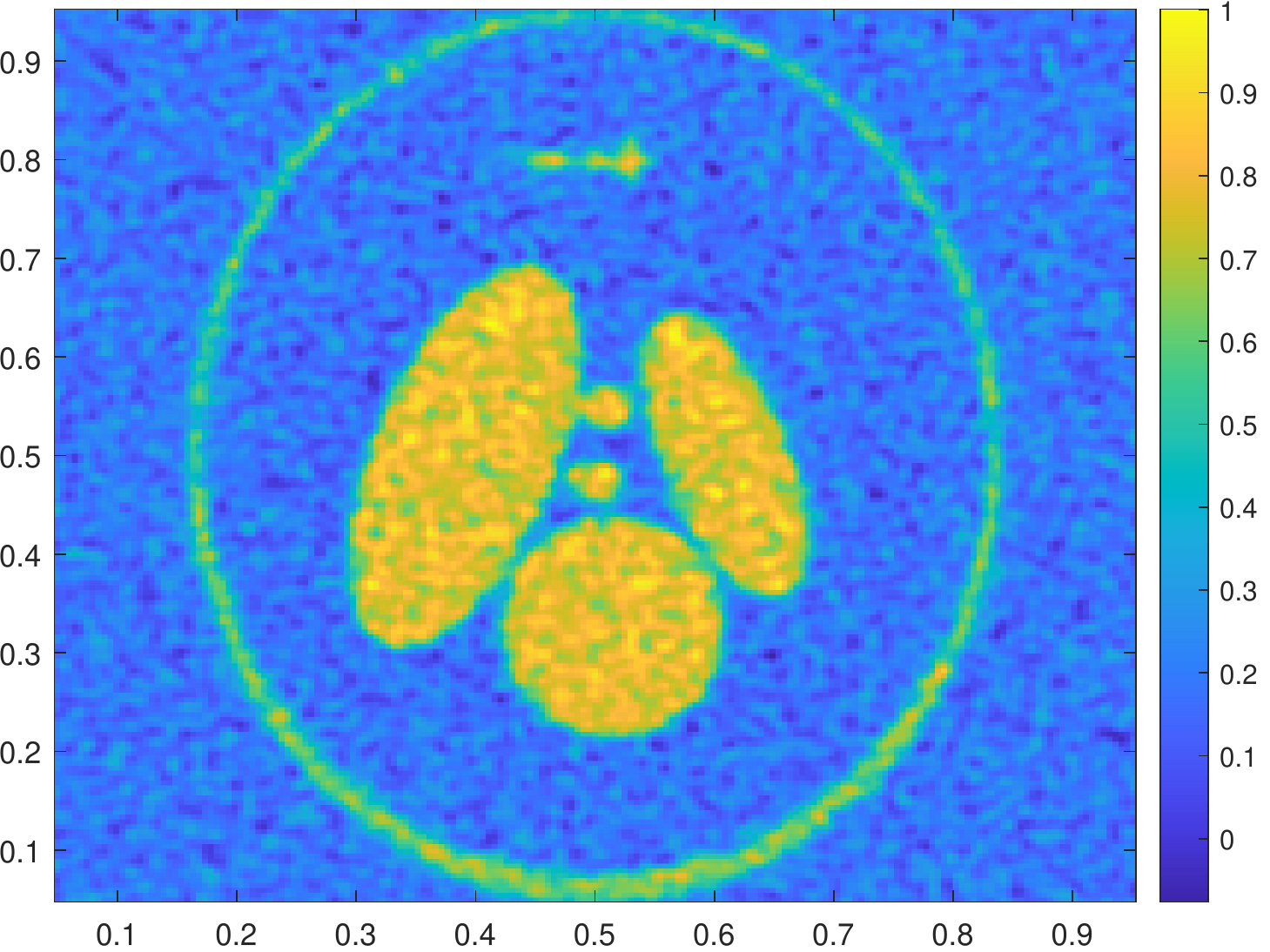}
                 \caption{DSM: $\gamma = 0.55$.}
         \end{subfigure}
   \begin{subfigure}[b]{0.33\textwidth}
                 \centering
                 \includegraphics[scale = 0.301]{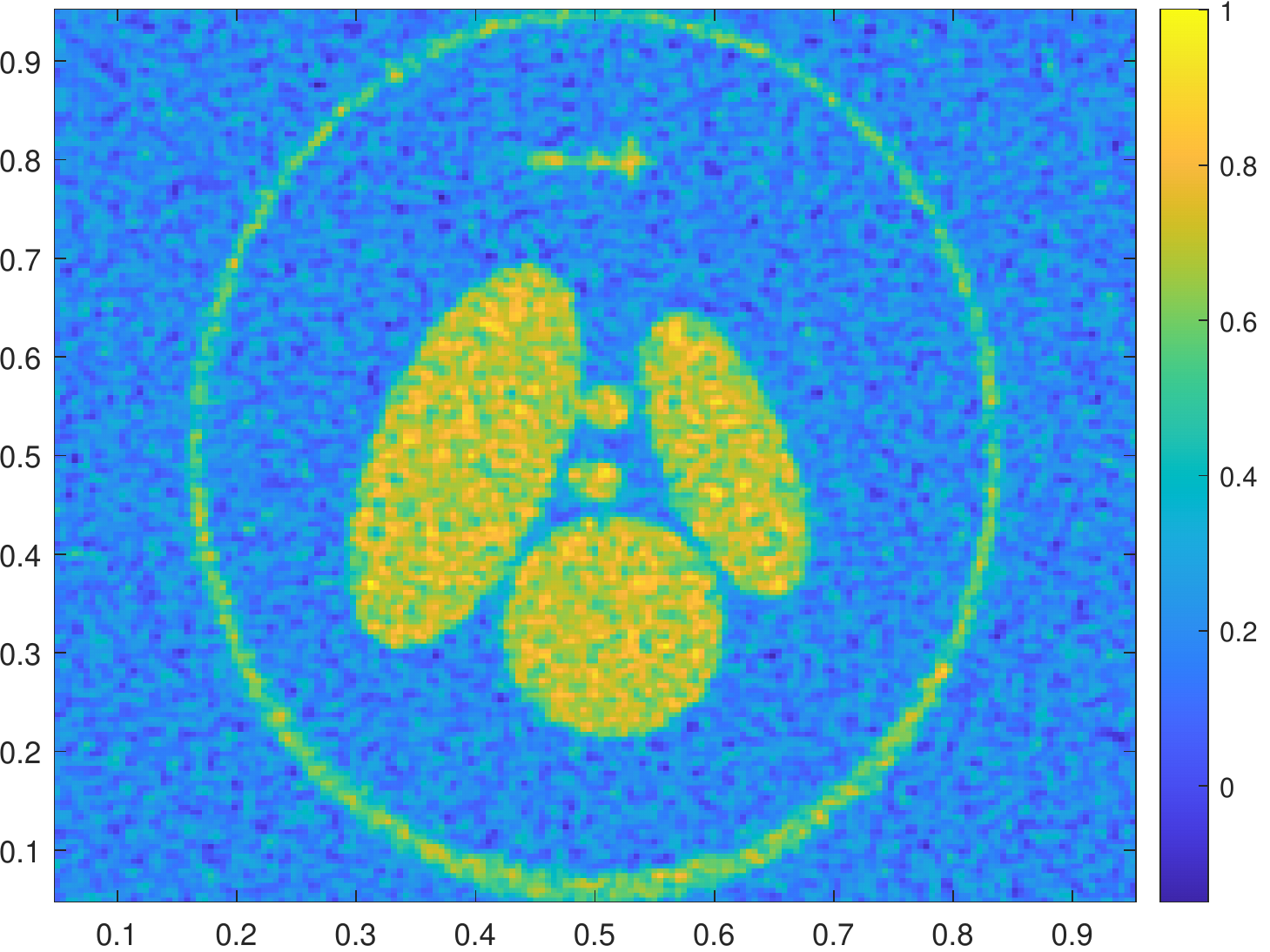}
                 \caption{FBP.}
         \end{subfigure}
   \begin{subfigure}[b]{0.33\textwidth}
                 \centering
                 \includegraphics[scale = 0.301]{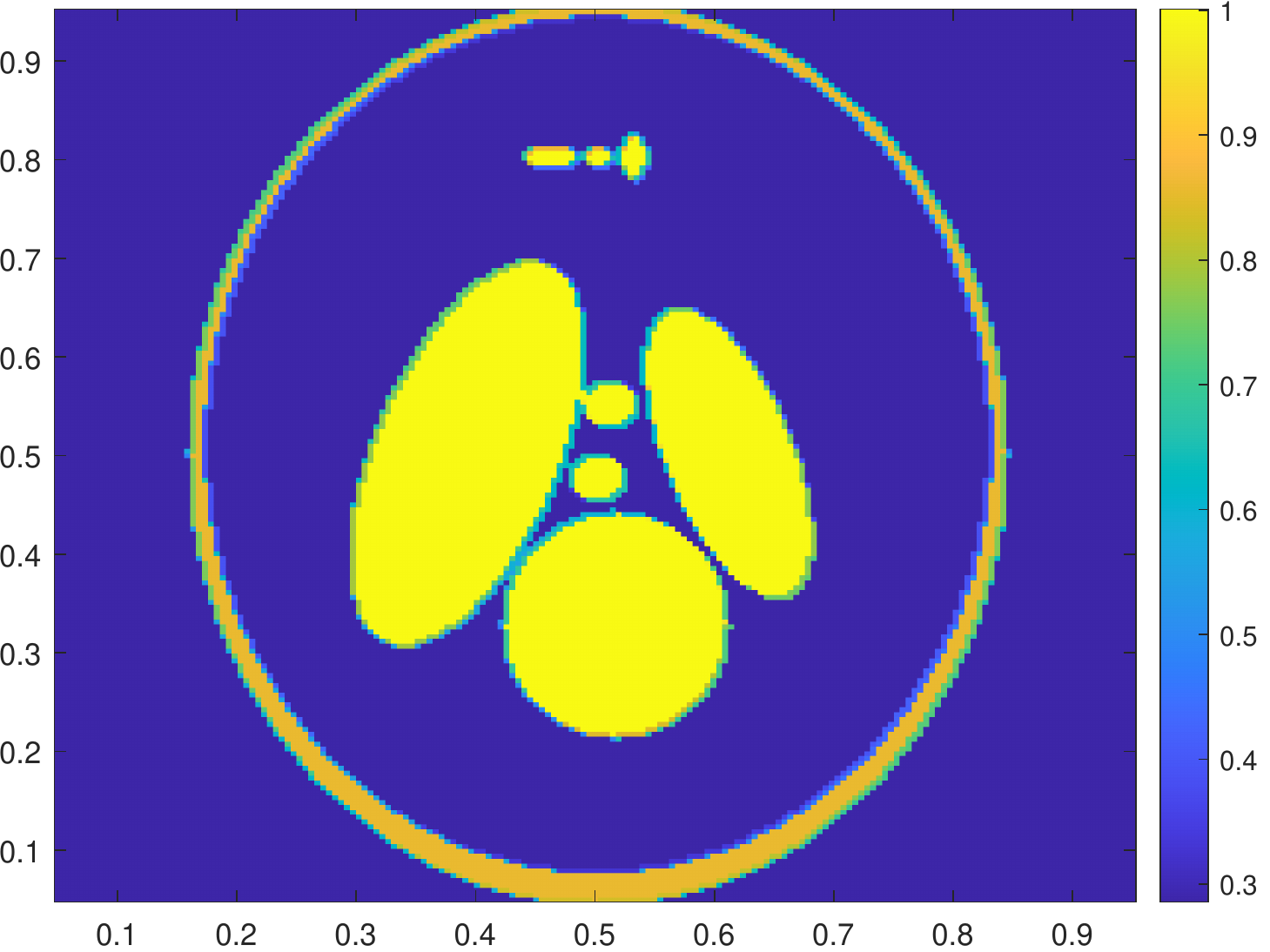}
                 \caption{$f(x)$.}
         \end{subfigure}
    \caption{Example 2. Under additive Gaussian noise: $20\%$.}
\label{example2}
\end{figure}

\textbf{Example 3.}  
In this example, we consider the 'salt and pepper' type noise in the measurement data. The reconstructions by DSM (with $\gamma = 0.4$) and FBP are shown in Fig.\,\ref{example3}. The corresponding reconstruction errors are given respectively by
\begin{equation*}
\Err_{DSM}^2=   0.180 \,,\quad 
\Err^2_{FBP}=   0.530 \,,\quad 
\Err^{\infty}_{DSM} = 0.173   \,,\quad 
\Err^{\infty}_{FBP}=   0.454   \,,
\end{equation*}
for the reconstruction in the first row with the noise level being $8\%$, and by 
    \begin{equation*}
\Err^2_{DSM}=    0.269 \,,\quad 
\Err^2_{FBP}=    0.369 \,,\quad 
\Err^{\infty}_{DSM} = 0.232   \,,\quad 
\Err^{\infty}_{FBP}=   0.300   \,,
\end{equation*}
for the reconstruction in the second row with the noise level being $8\%$.

From the numerical reconstructions, we notice that the DSM is quite stable and accurate when the measurement data is severely polluted by the 'salt and pepper' type noise. And based on the $L^2$-norm error and the $L^{\infty}$-norm error of the reconstruction, we can see that the DSM performs obviously better than FBP. Moreover, comparing reconstruction results of DSM and FBP in Fig.\,\ref{example3}(a) and (b), DSM can recover the shape of the triangle and the ellipse much more accurately. 

\begin{figure}
    \begin{subfigure}[b]{0.33\textwidth}
                 \centering
                 \includegraphics[scale = 0.301]{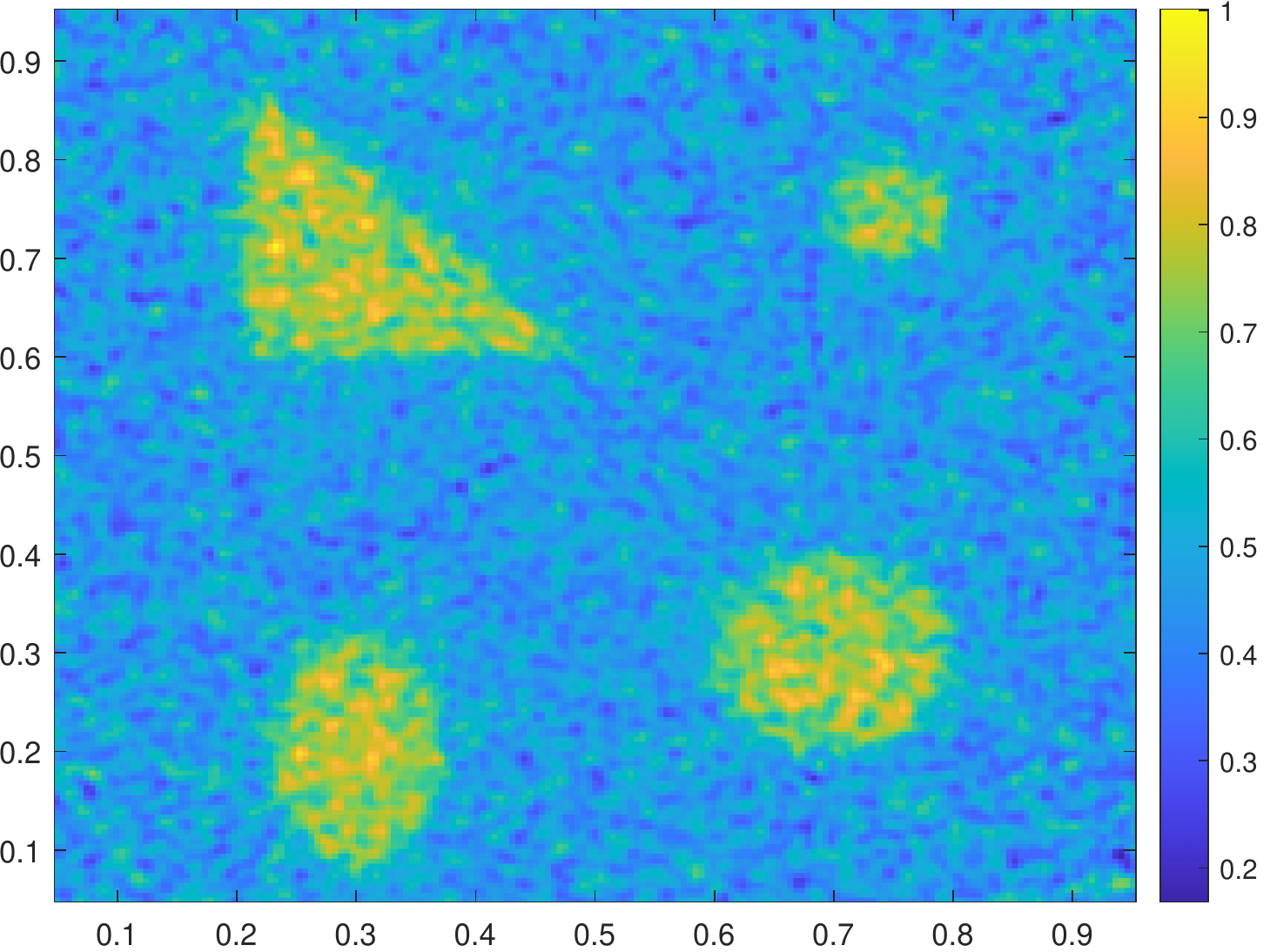}
                 \caption{DSM: $\gamma = 0.4$.}
         \end{subfigure}
   \begin{subfigure}[b]{0.33\textwidth}
                 \centering
                 \includegraphics[scale = 0.301]{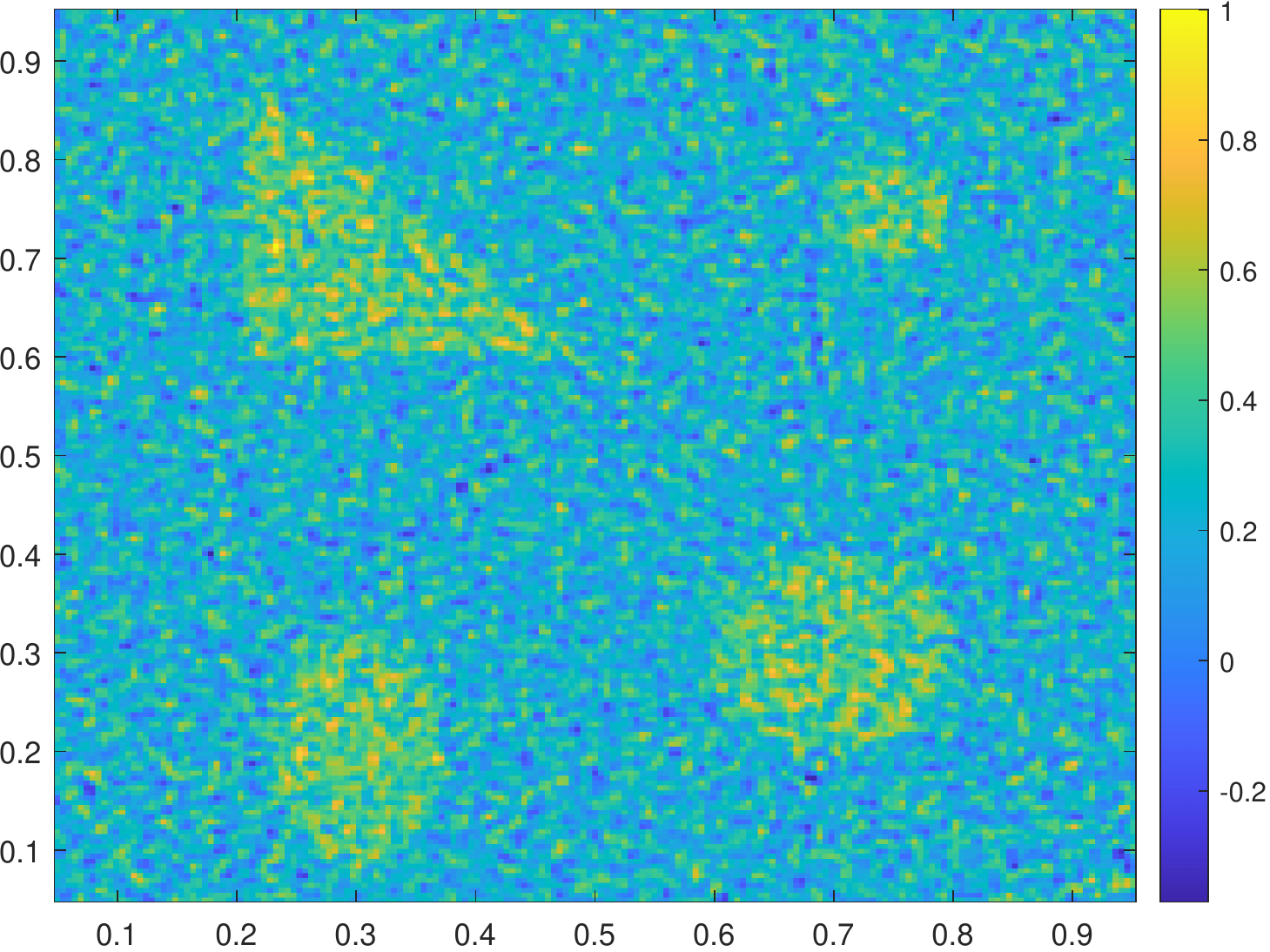}
                 \caption{FBP.}
         \end{subfigure}
   \begin{subfigure}[b]{0.33\textwidth}
                 \centering
                 \includegraphics[scale = 0.301]{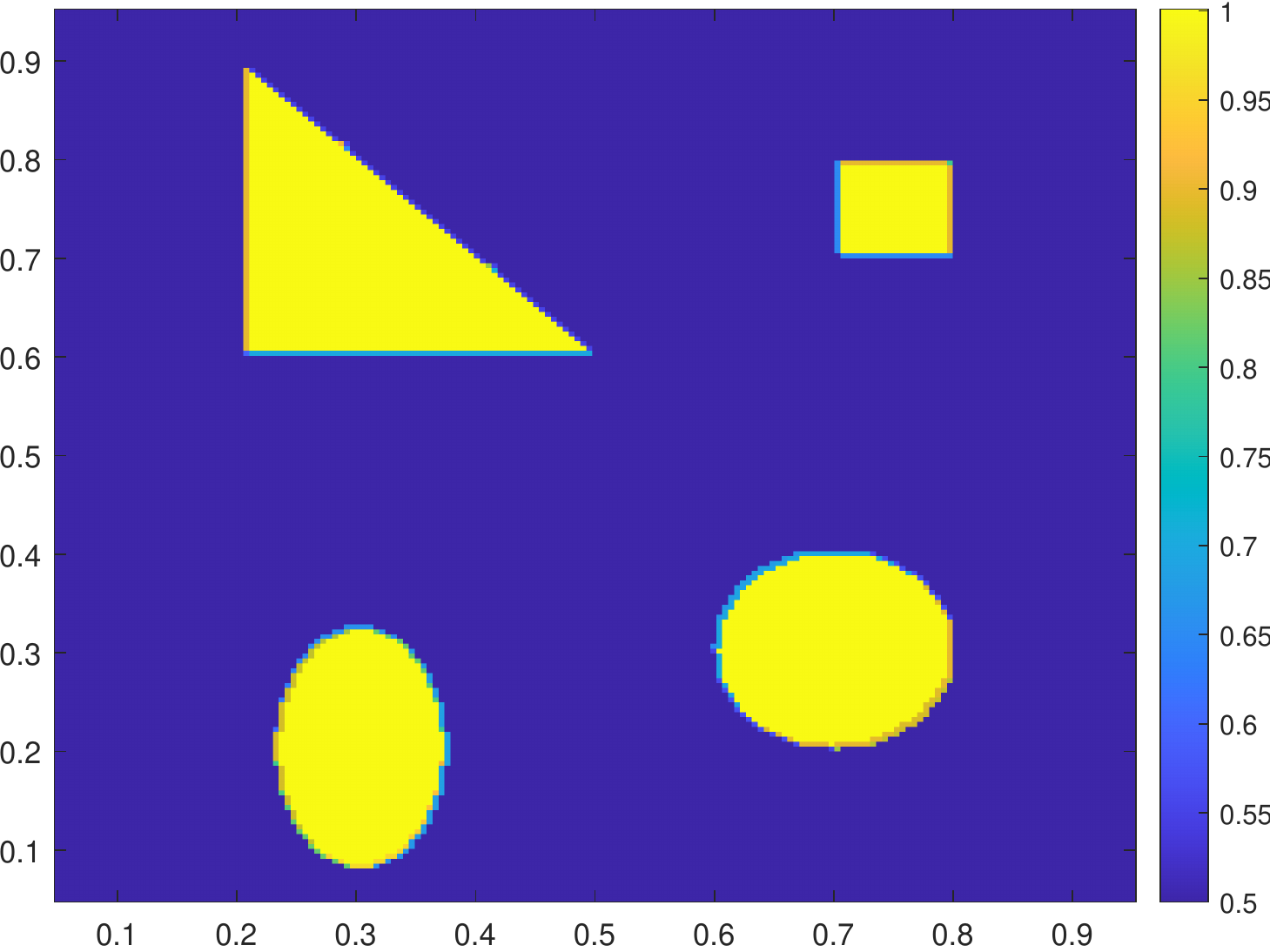}
                 \caption{$f(x)$.}
         \end{subfigure}
      \newline
     \begin{subfigure}[b]{0.33\textwidth}
                 \centering
                 \includegraphics[scale = 0.301]{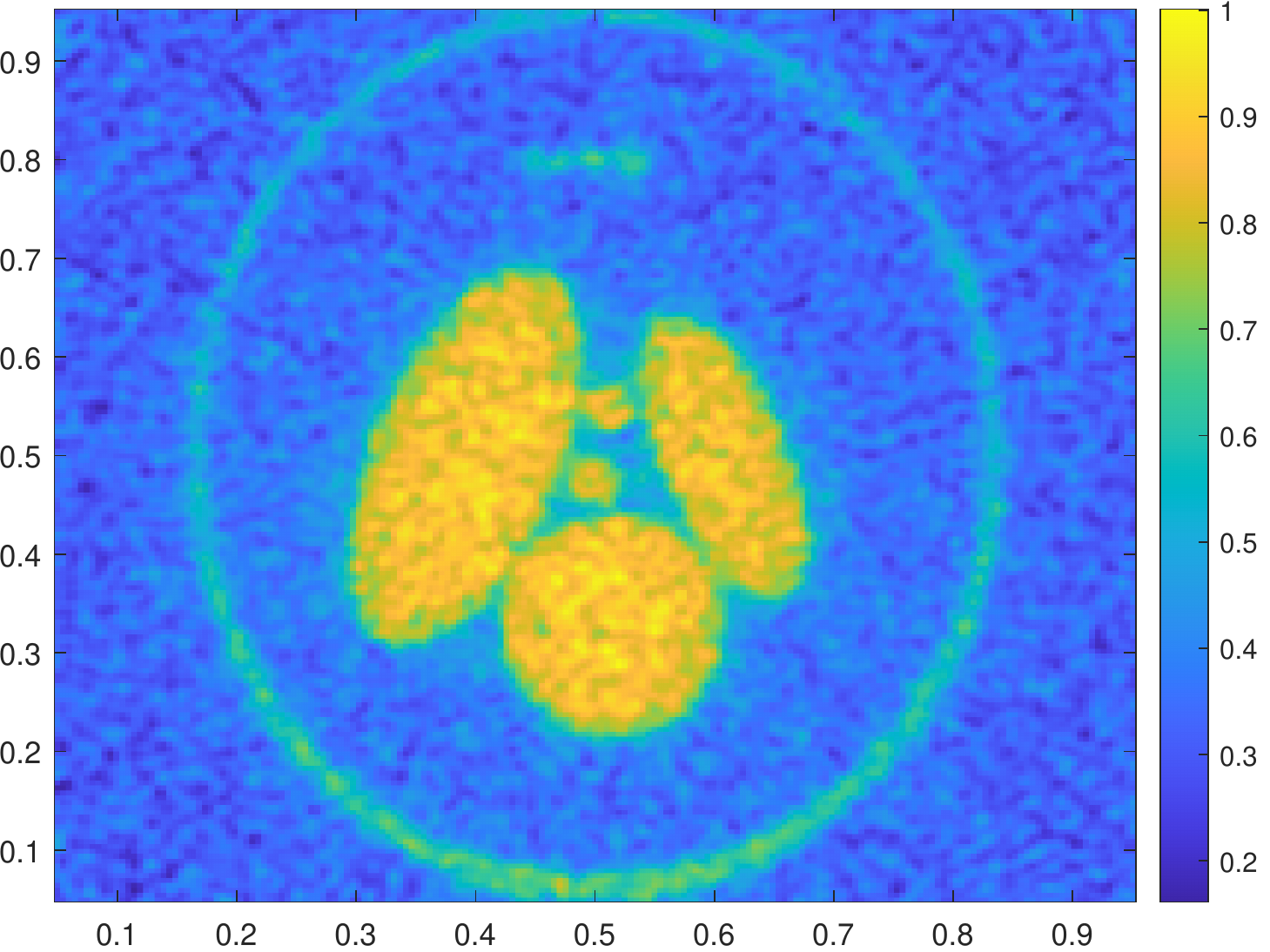}
                 \caption{DSM: $\gamma = 0.4$.}
         \end{subfigure}
   \begin{subfigure}[b]{0.33\textwidth}
                 \centering
                 \includegraphics[scale = 0.301]{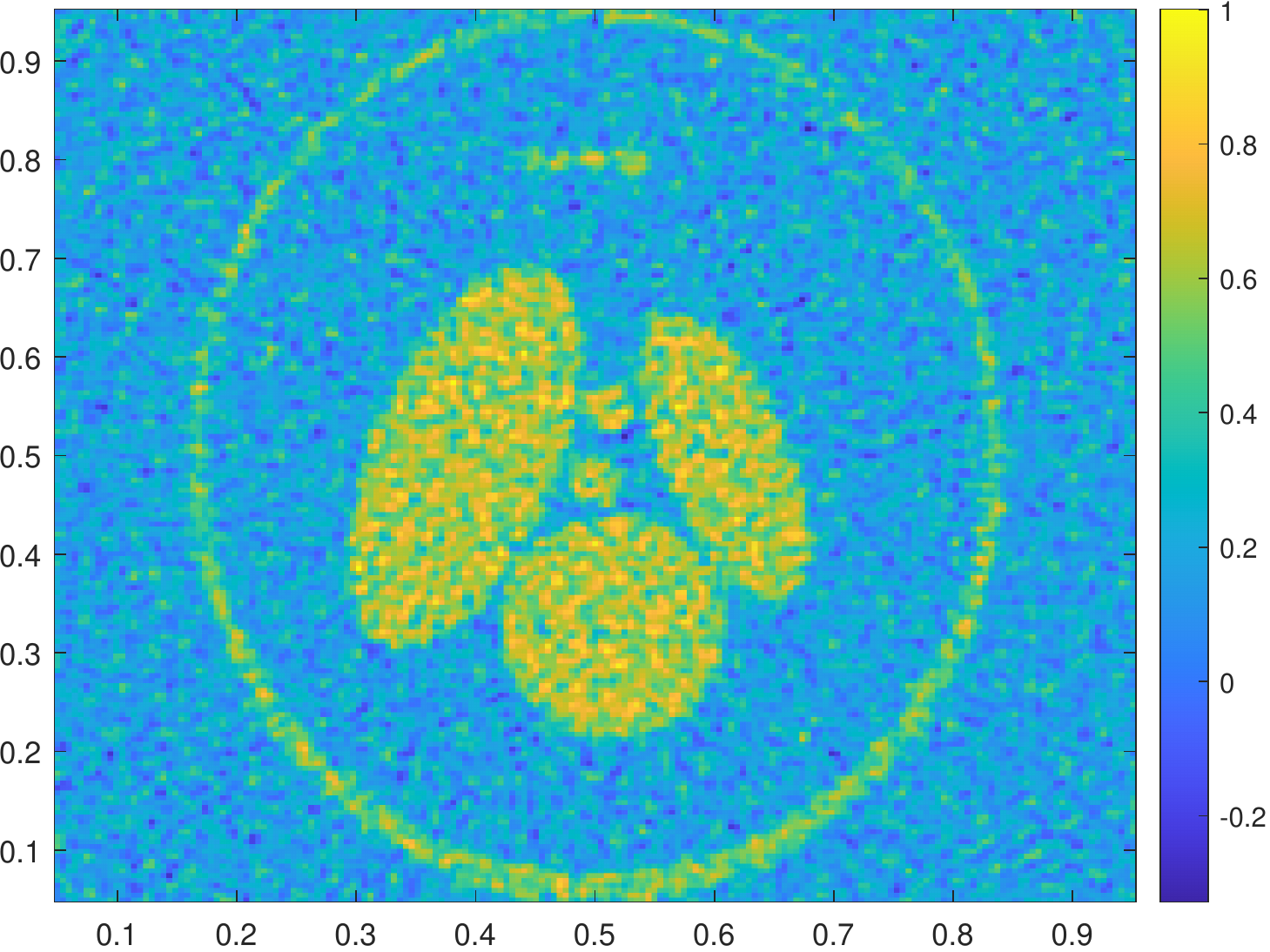}
                 \caption{FBP.}
         \end{subfigure}
   \begin{subfigure}[b]{0.33\textwidth}
                 \centering
                 \includegraphics[scale = 0.301]{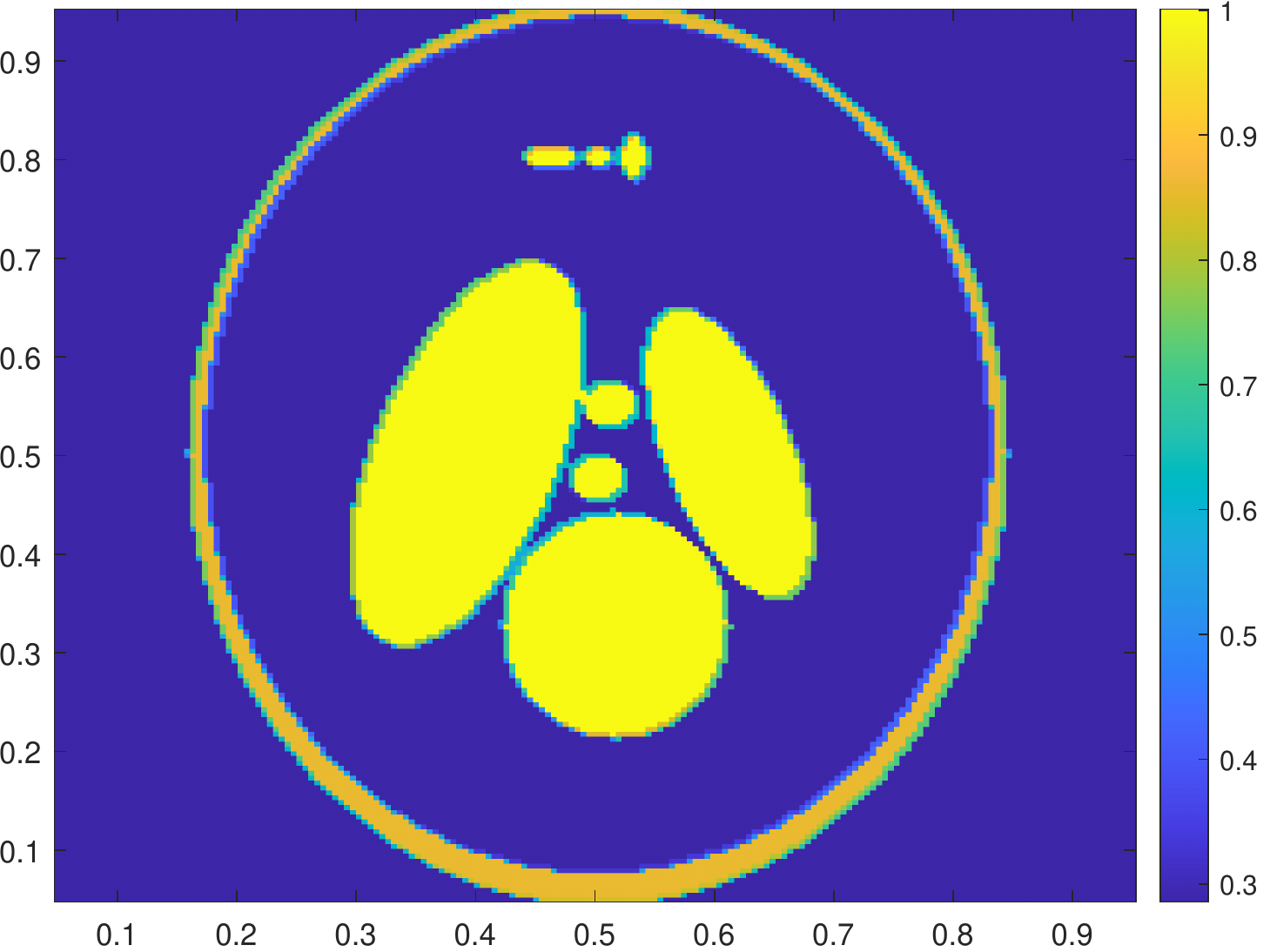}
                 \caption{$f(x)$.}
         \end{subfigure}
    \caption{Example 3. Under 'salt and pepper' noise: $8\%$.}
\label{example3}
\end{figure}

\textbf{Example 4.} 
This example studies a relatively challenging case with a limited number of projection angles in $\Gamma_\theta$, sparsely distributed over $[-\pi/2,\pi/2)$. The reconstructions by DSM (with $\gamma = 0.4$) and FBP are shown in Fig.\,\ref{example4}. The corresponding reconstruction errors are given respectively by
\begin{equation*}
\Err^2_{DSM} = 0.165 \,,\quad
\Err^2_{FBP} = 0.463 \,,\quad
\Err^{\infty}_{DSM} = 0.203   \,,\quad 
\Err^{\infty}_{FBP}=   0.478   \,,
\end{equation*}
for the reconstruction in the first row with projections from $18$ angles, and by  
\begin{equation*}
\Err^2_{DSM} = 0.214 \,,\quad
\Err^2_{FBP} = 0.650 \,,\quad
\Err^{\infty}_{DSM} = 0.266   \,,\quad 
\Err^{\infty}_{FBP}=   1.064   \,,
\end{equation*}
for the reconstruction in the second row with projections from $10$ angles.

As we may see from the reconstructions, the DSM demonstrates its strong robustness in this highly ill-posed scenario especially with respect to the $L^{\infty}$-norm error of the reconstruction. Moreover, for reconstructions in the second row with projections only from 10 directions, DSM still allows us to identify the shape and the location of objects in a reasonable manner while it is difficult to obtain useful information from the reconstruction by the FBP method. This shows a great potential of the DSM in real applications when projection angles are very sparsely distributed.

\begin{figure}
    \begin{subfigure}[b]{0.33\textwidth}
                 \centering
                 \includegraphics[scale = 0.301]{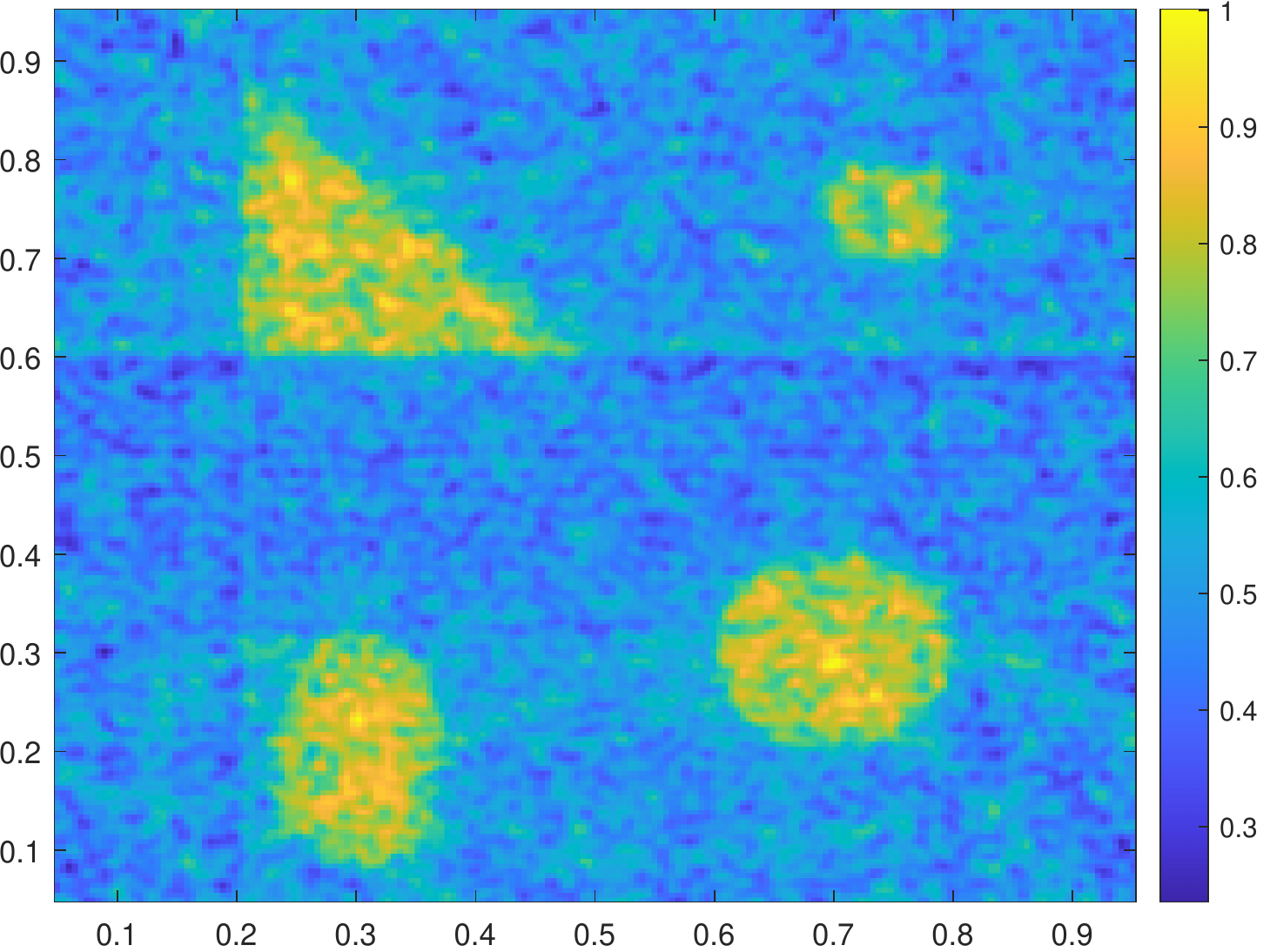}
                 \caption{DSM: $\gamma = 0.4$.}
         \end{subfigure}
   \begin{subfigure}[b]{0.33\textwidth}
                 \centering
                 \includegraphics[scale = 0.301]{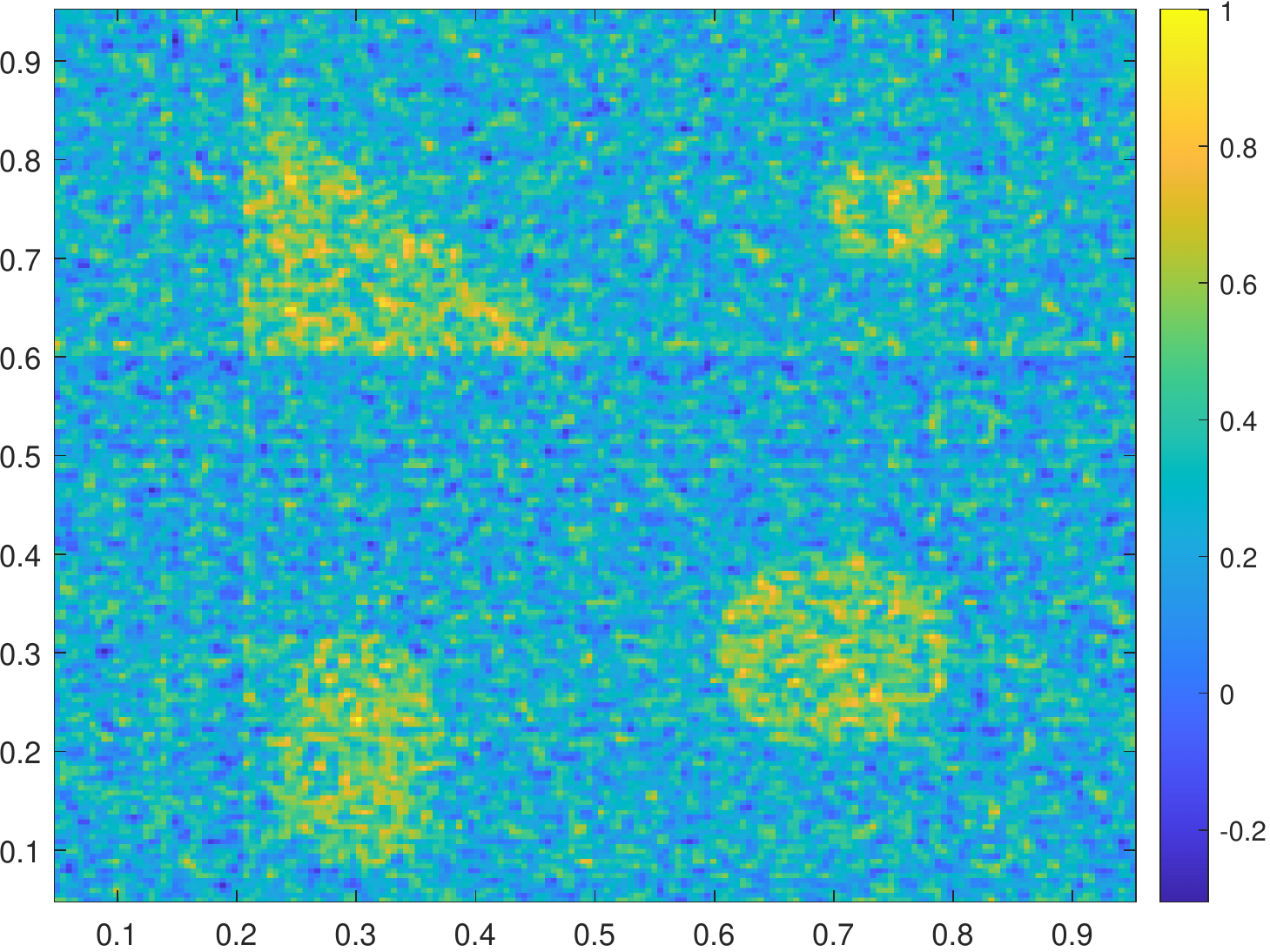}
                 \caption{FBP.}
         \end{subfigure}
   \begin{subfigure}[b]{0.33\textwidth}
                 \centering
                 \includegraphics[scale = 0.301]{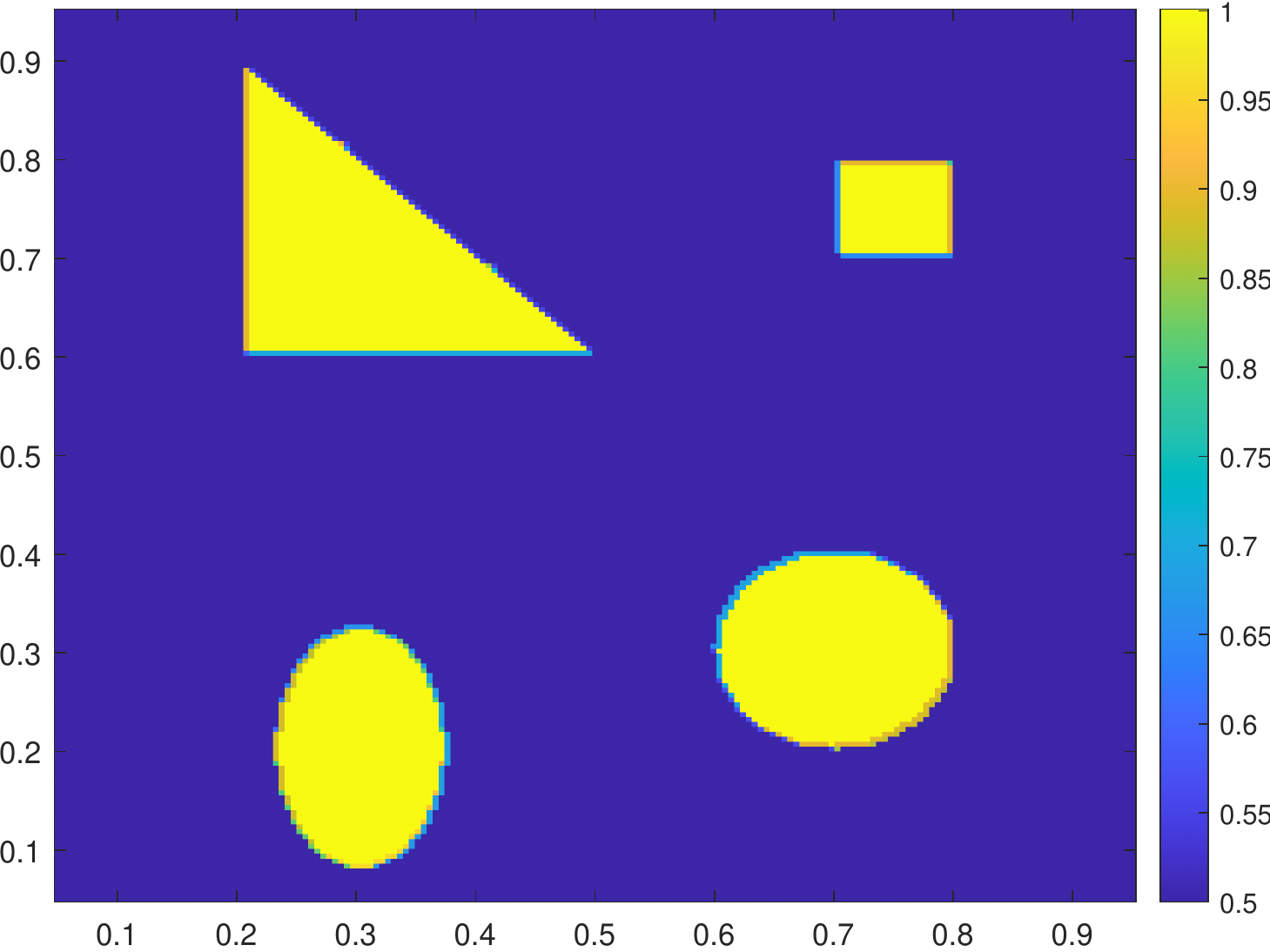}
                 \caption{$f(x)$.}
         \end{subfigure}
      \newline
     \begin{subfigure}[b]{0.33\textwidth}
                 \centering
                 \includegraphics[scale = 0.301]{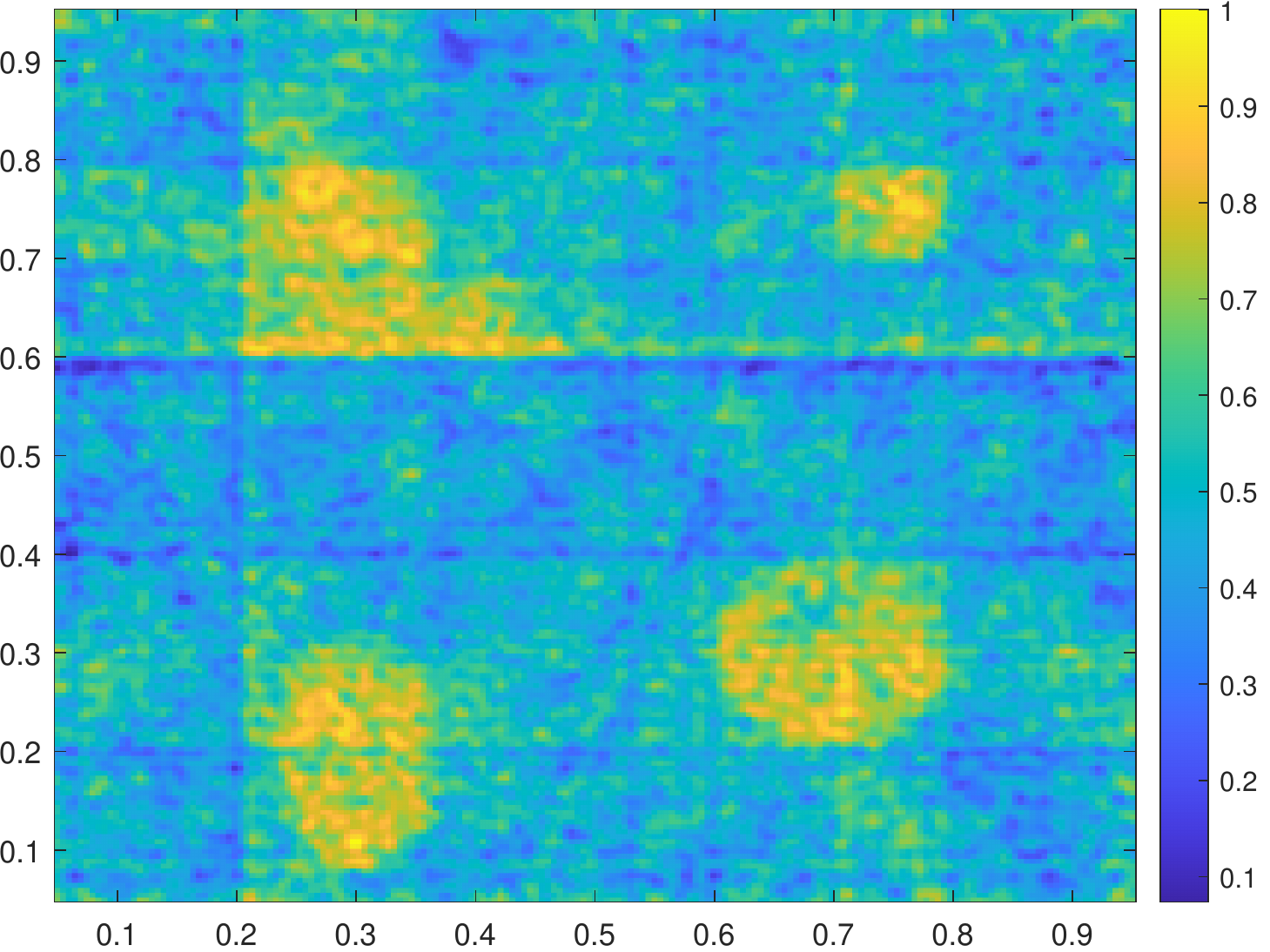}
                 \caption{DSM: $\gamma = 0.4$.}
         \end{subfigure}
   \begin{subfigure}[b]{0.33\textwidth}
                 \centering
                 \includegraphics[scale = 0.301]{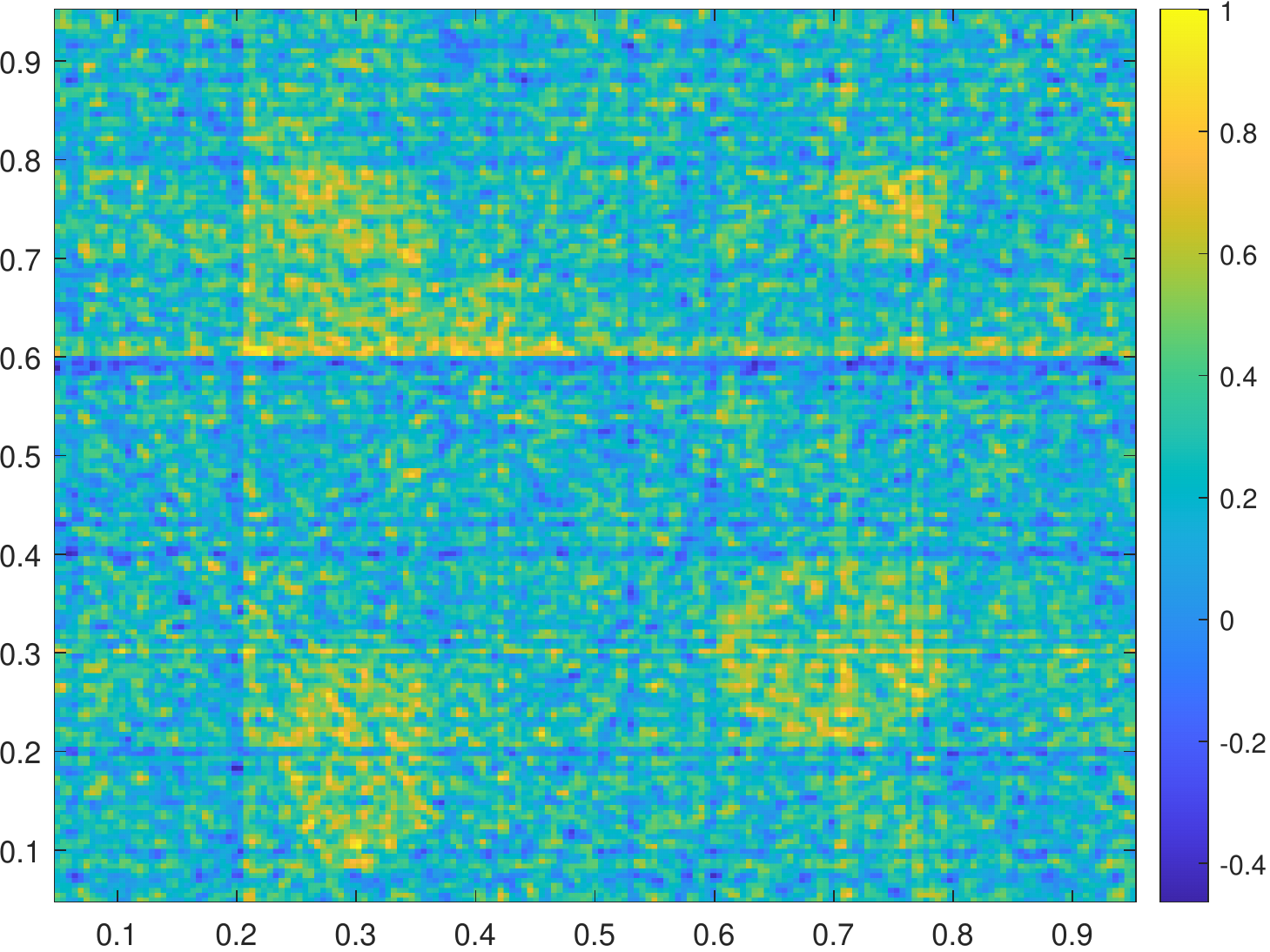}
                 \caption{FBP.}
         \end{subfigure}
   \begin{subfigure}[b]{0.33\textwidth}
                 \centering
                 \includegraphics[scale = 0.301]{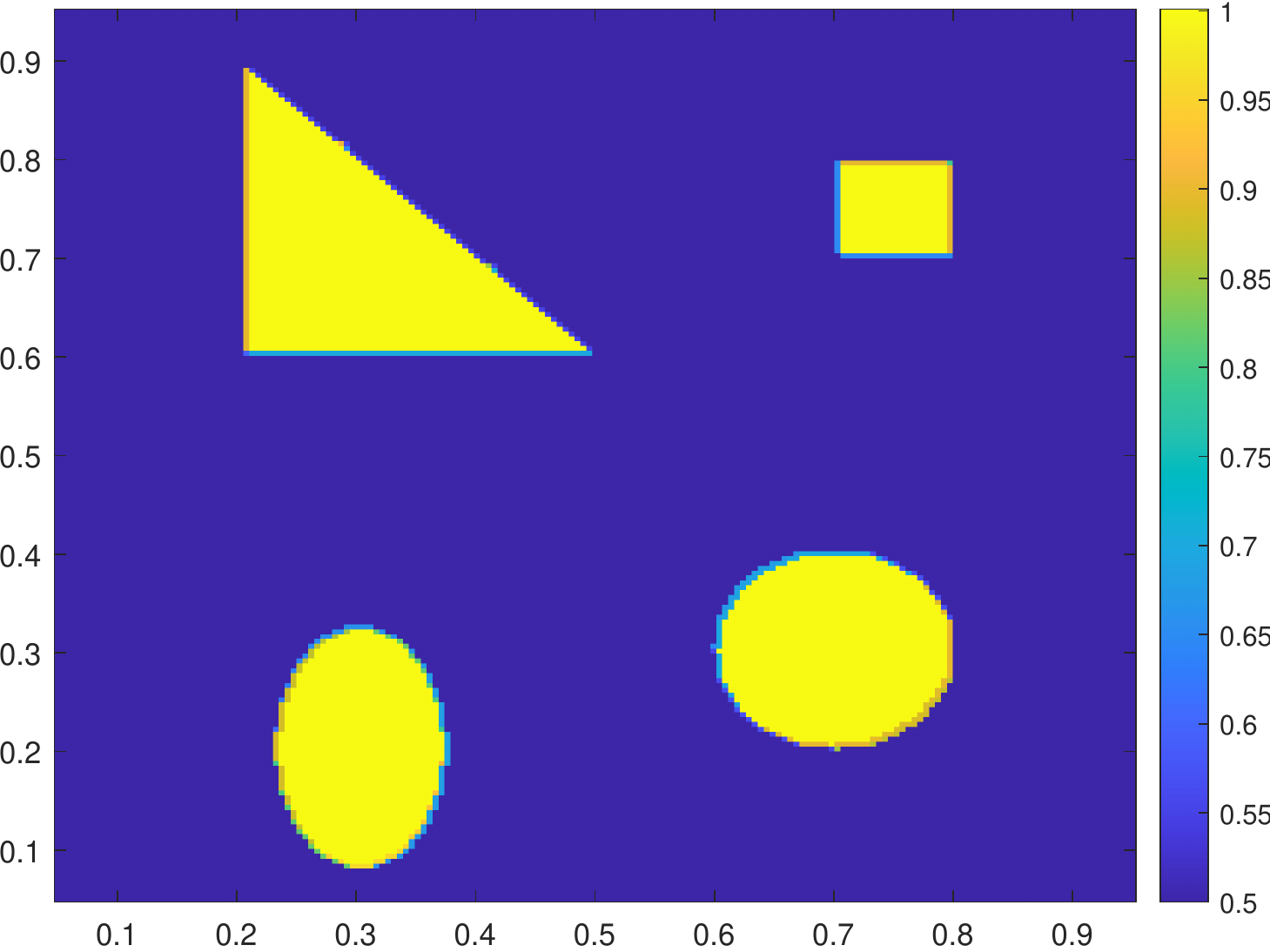}
                 \caption{$f(x)$.}
         \end{subfigure}
    \caption{Example 4. Sparse measurements with $5\%$ additive Gaussian noise}
\label{example4}
\end{figure}

\textbf{Example 5.} 
In this example, we consider the projection angles limited to a specific range as in section \ref{section_Limited_Angle}. The back projection operator needed in both the DSM and FBP reconstructions takes the form \eqref{def_dual_pat} with $\lambda = \pi/18$. The reconstructions by DSM (with $\gamma = 0.4$) and FBP are shown in Fig.\,\ref{example5}. The corresponding reconstruction errors are given respectively by
\begin{equation*}
\Err^2_{DSM} = 0.179 \,,\quad
\Err^2_{FBP} = 0.268 \,,\quad
\Err^{\infty}_{DSM} = 0.175   \,,\quad 
\Err^{\infty}_{FBP}=   0.239   \,,
\end{equation*}
for the reconstruction in the first row with $\Phi = \pi/3$ (cf. \eqref{parral_back}), and by 
\begin{equation*}
\Err^2_{DSM} = 0.217 \,,\quad
\Err^2_{FBP} = 0.348 \,,\quad
\Err^{\infty}_{DSM} = 0.211  \,,\quad 
\Err^{\infty}_{FBP}=   0.333   \,,
\end{equation*}
for the reconstruction in the second row with $\Phi = 2\pi/9$.

As we may see from the numerical reconstructions, especially from the second case where the projections are restricted only on a very narrow range with $\Phi = 2\pi/9$, we can see that the DSM performs obviously better than FBP, based on the $L^2$-norm error and the $L^{\infty}$-norm error of the reconstruction. As we can see from Fig.\,\ref{example5}(a), the shape of objects are recovered more accurately compared with FBP.

\begin{figure}
    \begin{subfigure}[b]{0.33\textwidth}
                 \centering
                 \includegraphics[scale = 0.301]{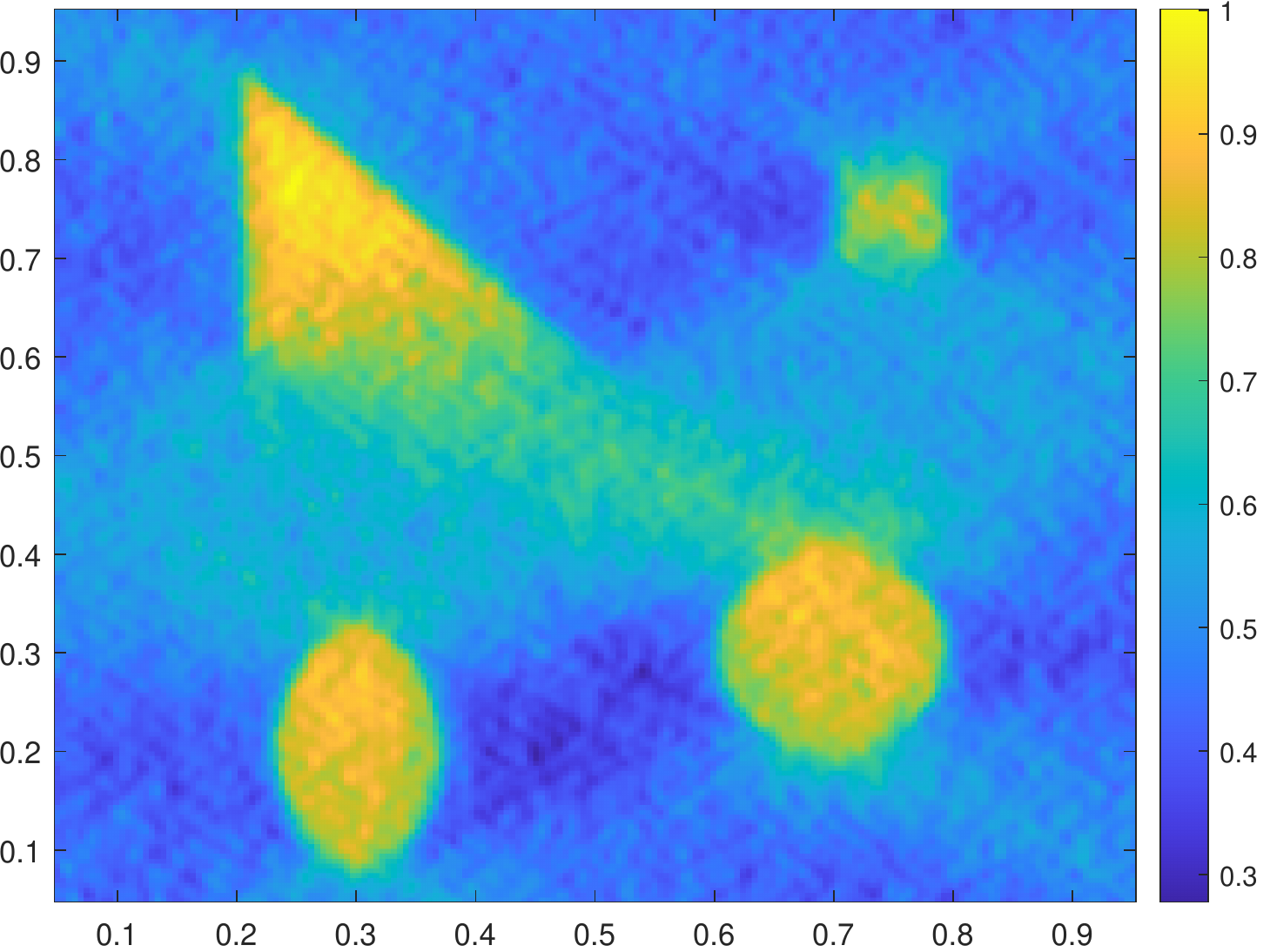}
                 \caption{DSM: $\gamma = 0.4$.}
         \end{subfigure}
   \begin{subfigure}[b]{0.33\textwidth}
                 \centering
                 \includegraphics[scale = 0.301]{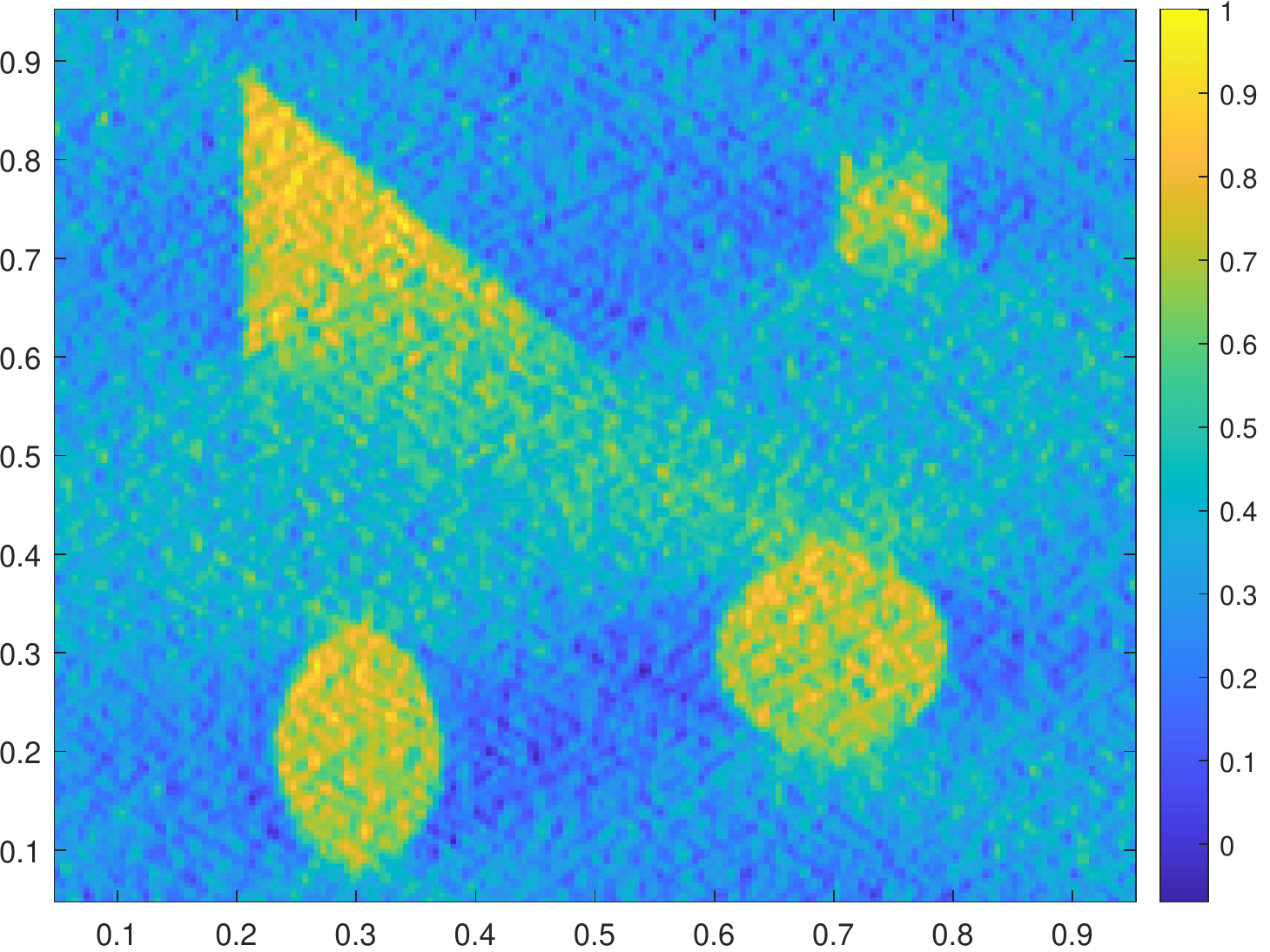}
                 \caption{FBP.}
         \end{subfigure}
   \begin{subfigure}[b]{0.33\textwidth}
                 \centering
                 \includegraphics[scale = 0.301]{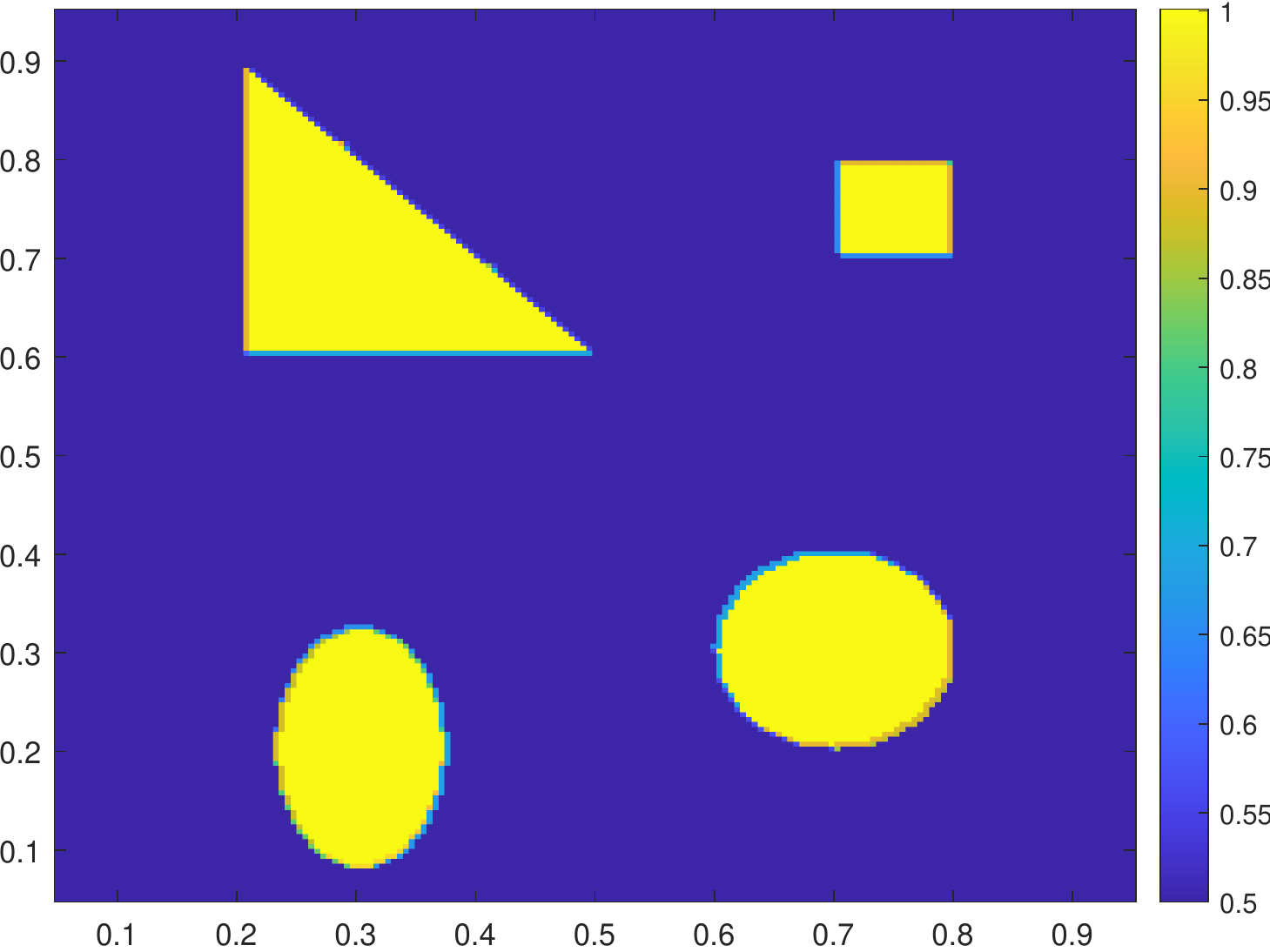}
                 \caption{$f(x)$.}
         \end{subfigure}
 \newline     
     \begin{subfigure}[b]{0.33\textwidth}
                 \centering
                 \includegraphics[scale = 0.301]{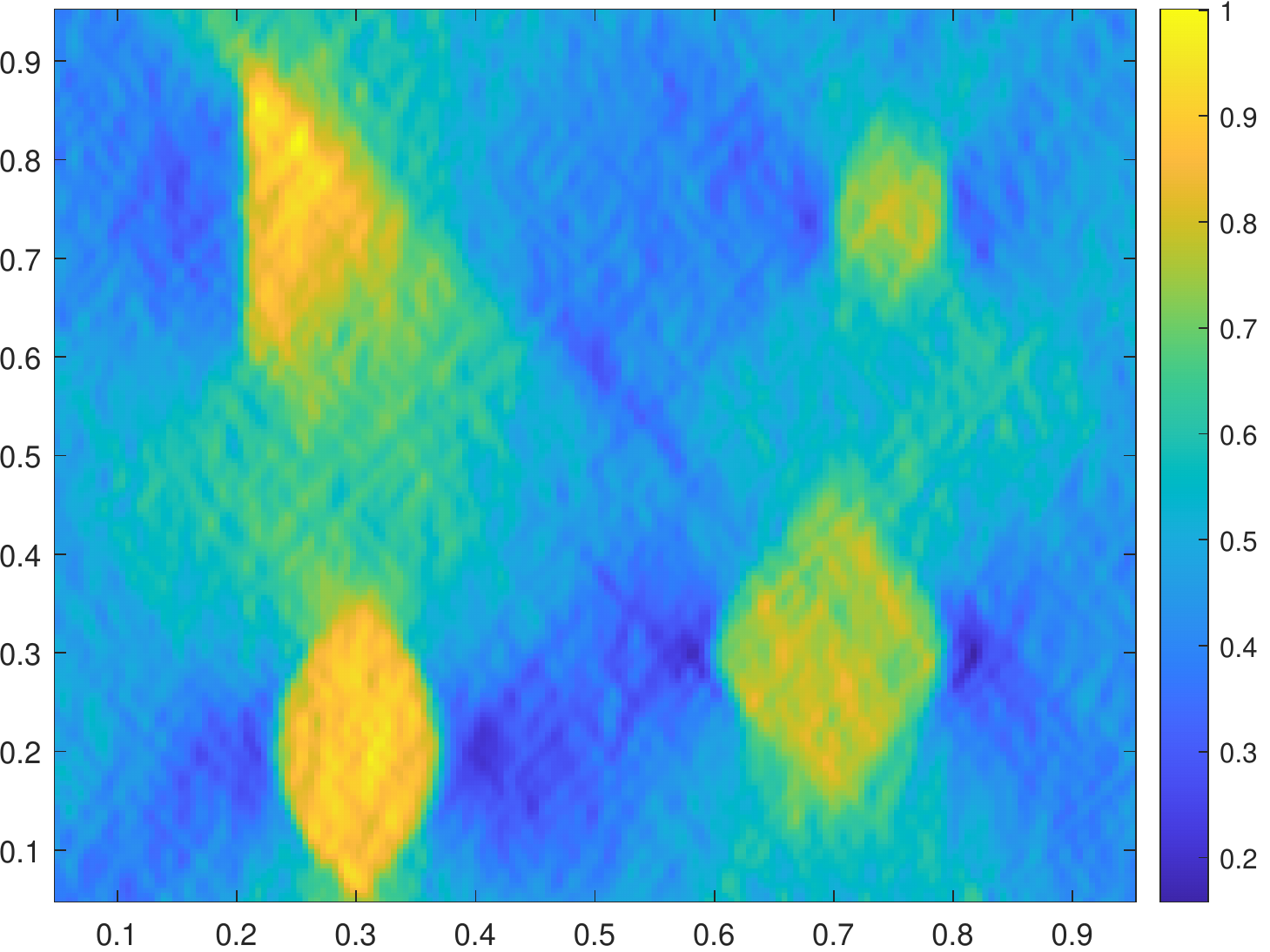}
                 \caption{DSM: $\gamma = 0.4$.}
         \end{subfigure}
   \begin{subfigure}[b]{0.33\textwidth}
                 \centering
                 \includegraphics[scale = 0.301]{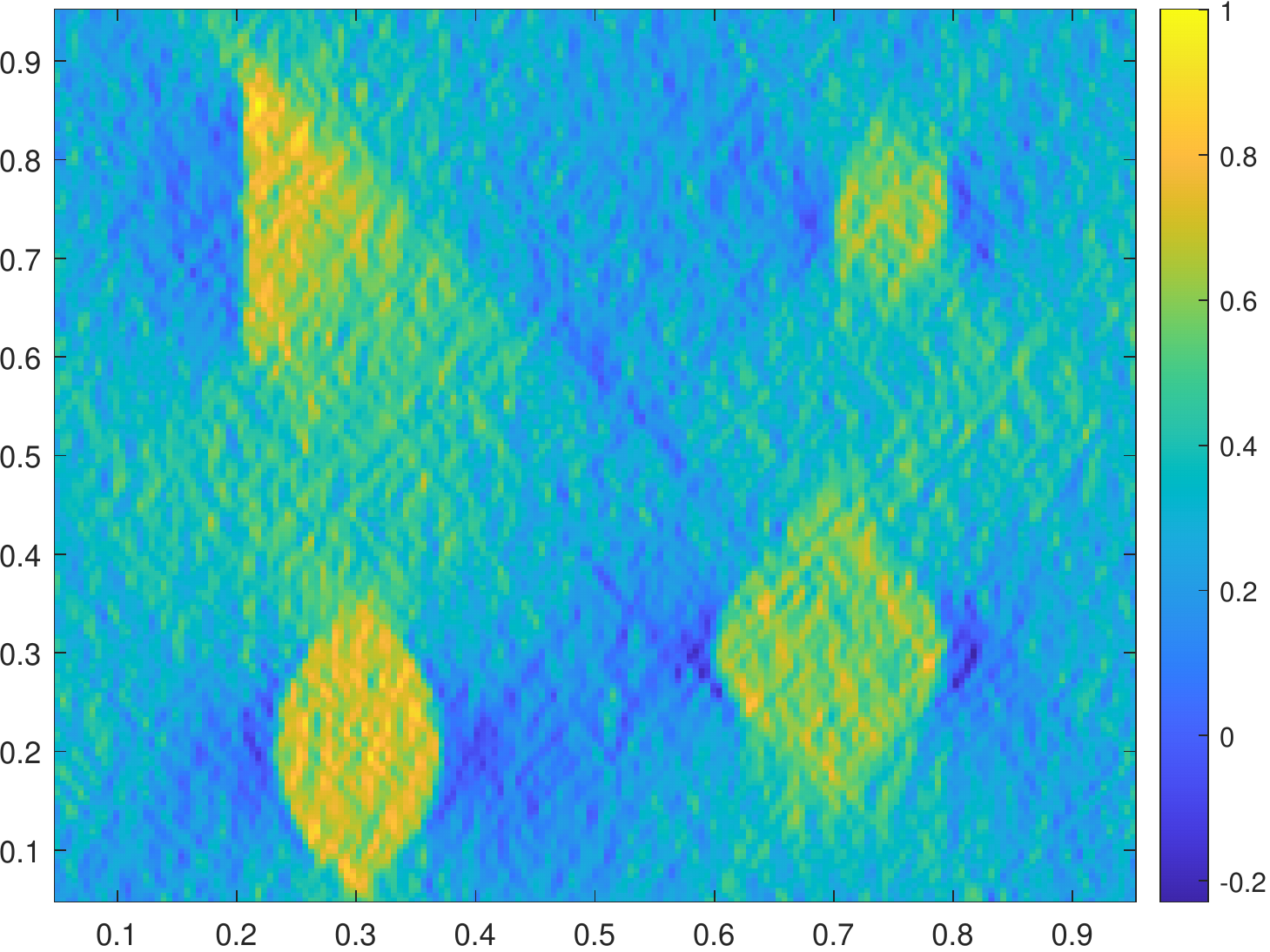}
                 \caption{FBP.}
         \end{subfigure}
   \begin{subfigure}[b]{0.33\textwidth}
                 \centering
                 \includegraphics[scale = 0.301]{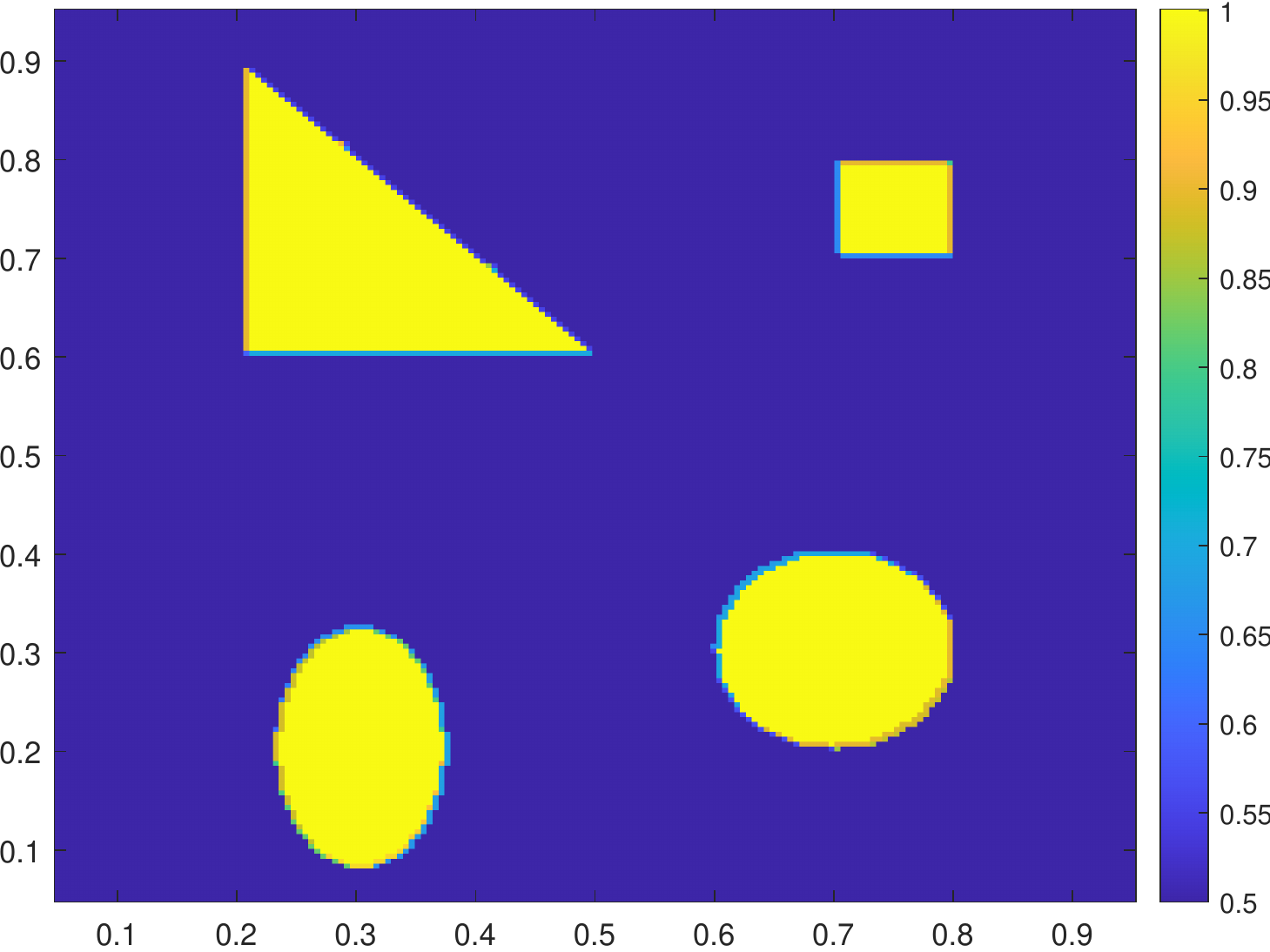}
                 \caption{$f(x)$.}
         \end{subfigure}
    \caption{Example 5. Limited angle tomography, with $10\%$ additive Gaussian noise: reconstructions with $\Phi = \pi/3$ (first row) and $\Phi = 2\pi/9$ (second row).
    }
\label{example5}
\end{figure}

\textbf{Example 6.} In this example, we consider a three-dimensional reconstruction. The reconstruction by DSM (with $\alpha = 4$, $\gamma = 0.9$) and FBP method under $1\%$ Gaussian noise are shown in Fig.\,\ref{example6}, with the mesh size $h = 10^{-2}$. For this example, the measurement data is available for 900 discrete angles $ \Gamma_{\theta} \subset \mathbb{S}^2$, and discrete measurement points $\Gamma_t(\theta)\subset I_{\theta}$ as defined in \eqref{def_Itheta}. We point out that the distribution of measurement angles in this example is relatively sparse considering the difficulty of the three-dimensional reconstruction. The three objects are one rectangular box and two balls located in $\Omega = [-0.5,0.5]^3$ as illustrated in Fig.\,\ref{example6}(a). The target function $f(x) = 0.5$ if $x$ lies in these three objects and $f(x) = 0.3$ otherwise. The corresponding reconstruction errors are given respectively by
\begin{equation*}
\Err^2_{DSM} = 0.061 \,,\quad
\Err^2_{FBP} = 0.140\,,\quad
\Err^{\infty}_{DSM} = 0.361 \,,\quad
\Err^{\infty}_{FBP} = 0.778\,,\quad
\end{equation*}
 To better illustrate reconstruction results, in Fig.\,\ref{example6}, we set $I_{DSM}(z) = 0$ if $|I_{DSM}(z)|<0.4$ and  $I_{DSM}(z) = 1$ if $|I_{DSM}(z)|\ge 0.4$ for $z\in \Omega$ to represent the support of objects reconstructed by the DSM, and we do the same for $I_{FBP}$. From Fig.\,\ref{example6}(b), we see that DSM can recover the basic shape, size, and position of the three objects quite reasonably, with three objects well separated, especially the two balls that are rather close to each other. While the reconstruction by the FBP method in Fig.\,\ref{example6}(c) generates many improper noisy points in the whole sampling domain. This example demonstrates the accuracy of DSM in reconstructing the support of objects in $\mathbb{R}^3$ with noisy measurement data.

\begin{figure}
  \begin{subfigure}[b]{0.33\textwidth}
                 \centering
         \includegraphics[scale = 0.3]{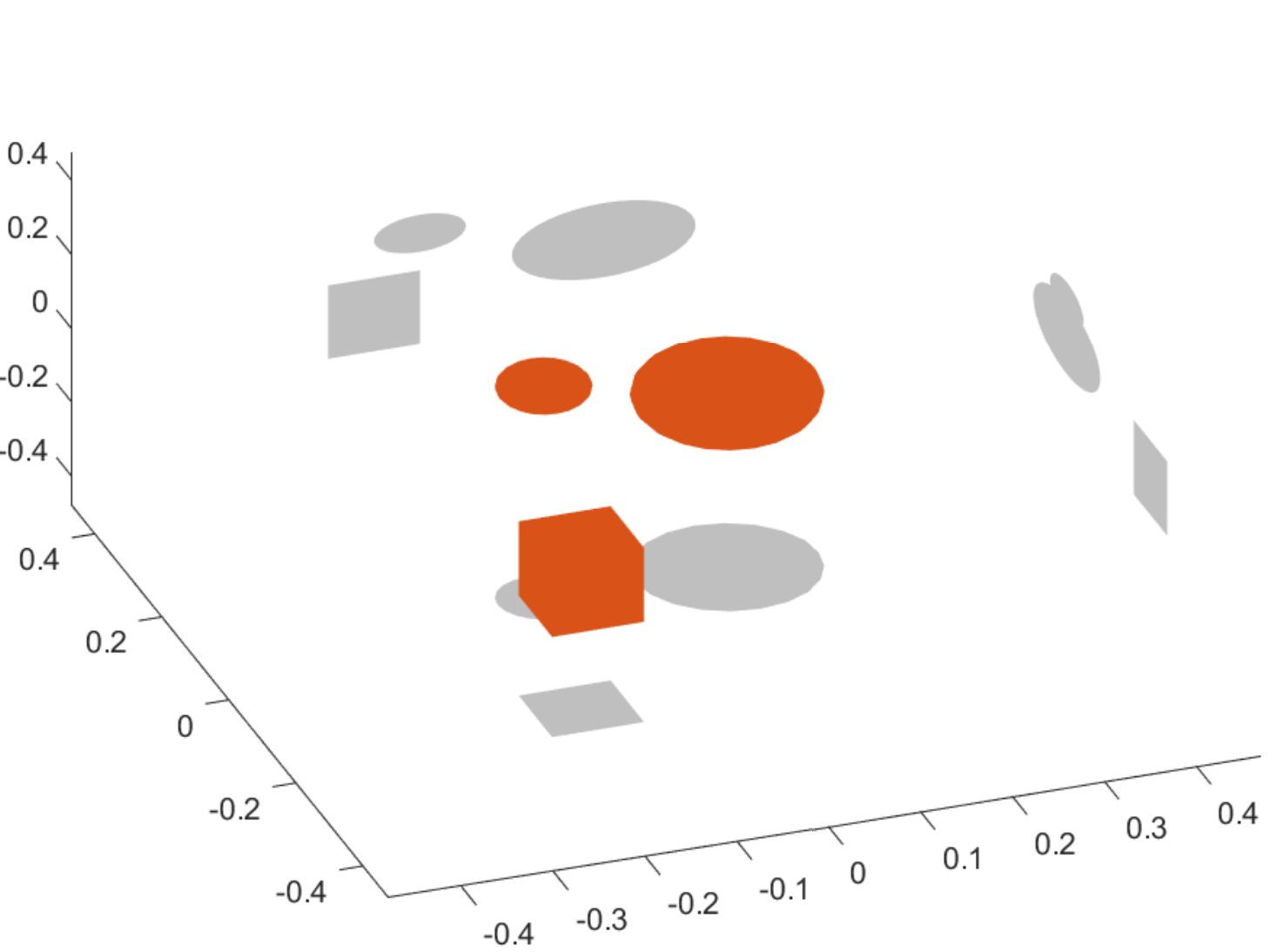}
                 \caption{$f(x)$.}  
         \end{subfigure}
   \begin{subfigure}[b]{0.33\textwidth}
                 \centering
                 \includegraphics[scale = 0.3]{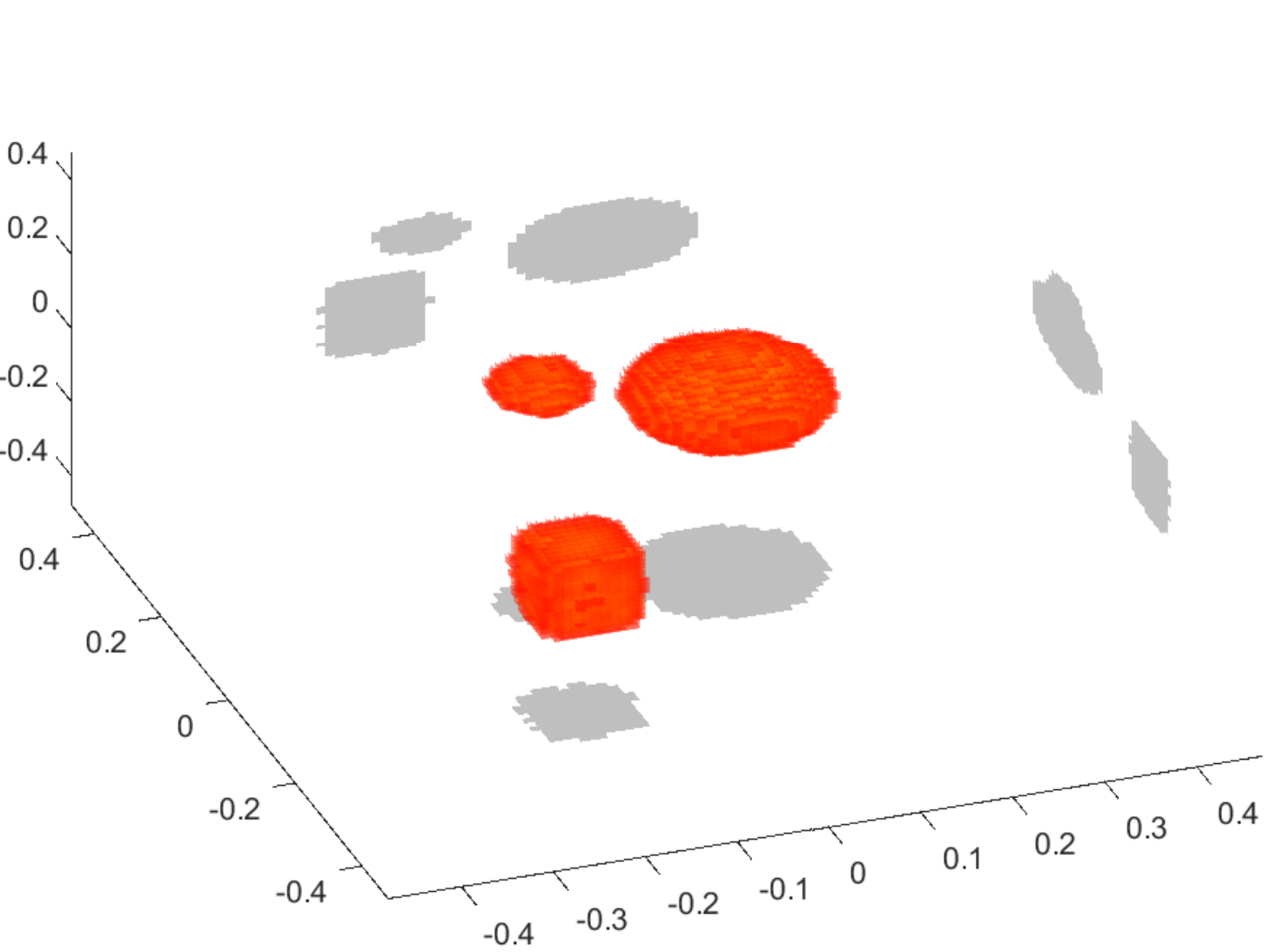}
                 \caption{DSM: $\alpha = 4$, $\gamma = 0.9$.}
    \end{subfigure}
    \begin{subfigure}[b]{0.33\textwidth}
                 \centering
                 \includegraphics[scale = 0.3]{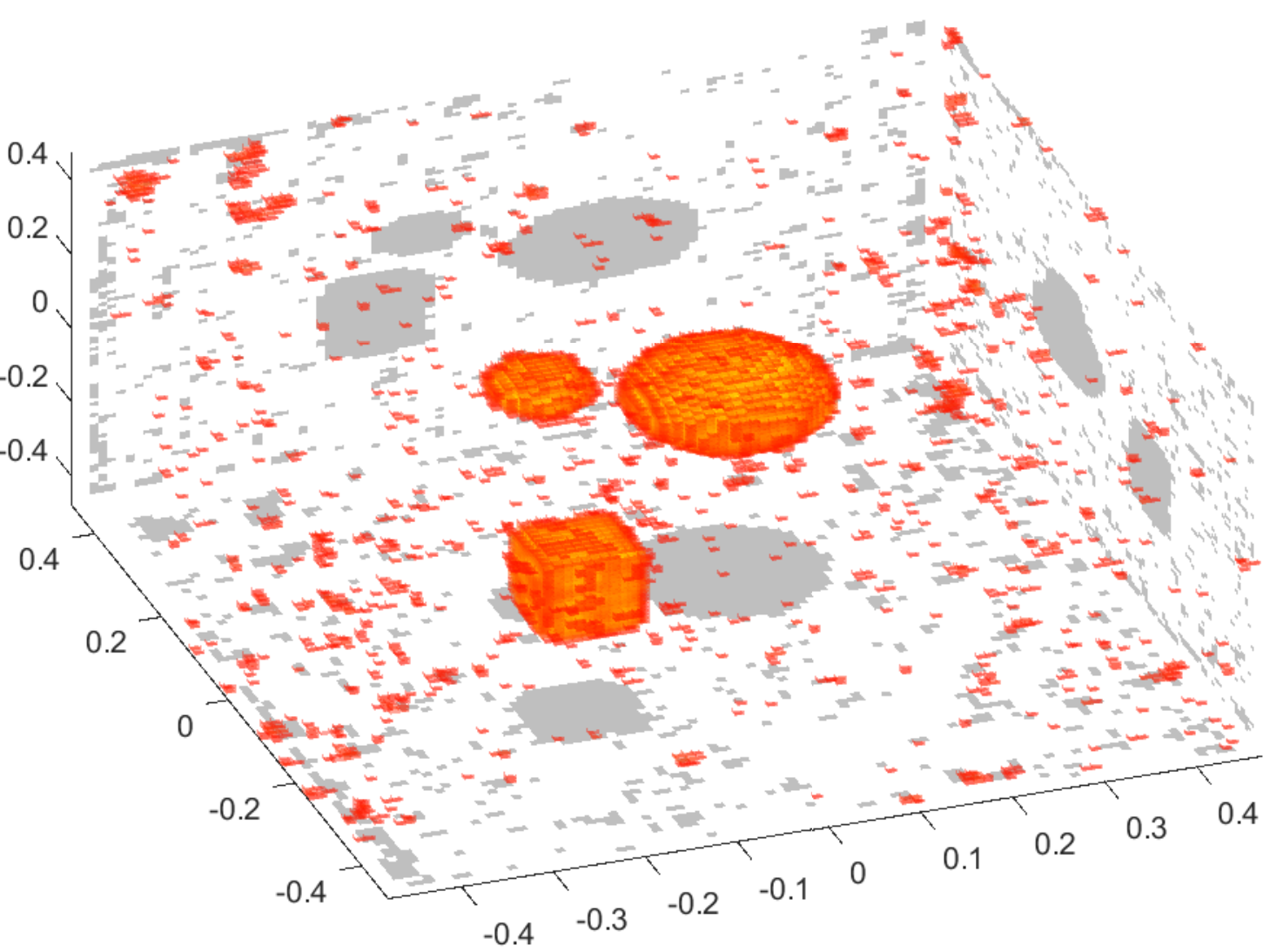}
                 \caption{FBP: Hamming filter.}
    \end{subfigure}
\caption{Example 6. Reconstruction in $\mathbb{R}^3$, with $1\%$ additive Gaussian noise.}
\label{example6}
\end{figure}

\section{Concluding remarks}
We have proposed a novel stable, fast, and parallelable direct sampling method for the inversion of the Radon transform, which is severely ill-posed when the measurement data is noisy and very limited as it appears frequently in real applications.

The DSM leverages on an important almost orthogonality property under a fractional order duality product. A family of probing functions is constructed by modifying the Green's function associated with a related fractional Laplacian. As a result of the choices of the appropriate duality product space and probing functions, the novel DSM can generate fast and satisfactory reconstruction results in challenging cases when the measurement data is highly noisy and limited. So DSM may have good potential applications in many real scenarios, such as security scanning, cancer detection, and portable CT scanner, and so on. 

Along this research direction, there are several important topics that are worth exploring in the future. For instance, a more systematic derivation and optimal choice of other effective probing functions are very interesting, which can provide more concrete guidance in practice when DSM is applied. Moreover, the validation of the DSM for the sparse tomography and the limited angle tomography are also very important due to the wide applications of these imaging techniques. From our analyses in this work, it is feasible to generalize direct sampling type methods to many other tomography problems, for instance, the general exponential Radon transform, the cone-beam computed tomography, the geodesic Radon transform, and so on. In the meantime, the generalization should preserve similar nice features to the ones of DSM in this work. 

\sectionfont{\fontsize{10}{10}\selectfont}
\begin{appendices}

\section{{Choice of the smooth extension function $\psi_{n+1}$ in \eqref{def_probing}.}}
\label{sec_appencix}
In this appendix, we shall present our choice of the smooth extension function $\psi_{n+1}$ in the definition of the auxiliary function $\zeta_{n+1}^h$ \eqref{def_zeta} which is further employed to define the crucial probing function in \eqref{def_probing}. We shall point out that the smooth extension function for other choices $\alpha$ in \eqref{def_zeta} can be constructed similarly.

 We notice that, to allow $\zeta_{n+1}^h$ possess desired properties stated in \eqref{def_zeta}, it is sufficient to require $\psi_{n+1}: [0,h]\rightarrow \mathbb{R}$ to satisfy
\begin{equation}
\label{psi_bc}	
\psi_{n+1}\in C^{2,1}\big([0,h]\big)\,;\quad
	\begin{cases}
		\psi_{n+1}(h) = h^{-n-1}\,,\\
		\psi_{n+1}'(h) = -(n+1)h^{-n-2}\,,\\
		\psi_{n+1}''(h) = (n+1)(n+2)h^{-n-3}\,;
	\end{cases}\quad
	\begin{cases}
		\psi_{n+1}(0) = h^{-n}\,,\\
		\psi_{n+1}'(0) = 0\,,\\
		\psi_{n+1}''(0) = 0\,;
	\end{cases}
\end{equation}
and for $B(0,h)\subset \mathbb{R}^n$ and $h<1$,
\begin{equation}
\label{psi_integral}
\int_{B(0,h)}|\psi_{n+1}(|x|)-h^{-n-1}|dx \leq h\,.
\end{equation}

Our choice of $\psi_{n+1}(t)$ is to construct a polynomial that matches desired boundary conditions when $t=h$ and $t=0$ in \eqref{psi_bc}, and then we restrict the support of the function $\psi_{n+1}(t) - h^{-n-1}$ to meet the requirement \eqref{psi_integral}. For simplicity, we write $k = n+1$ and $b = h-h^2/n$, then $\psi_{k}(t)$ is defined as
\begin{equation}
\label{def_psi}
	\psi_{k}(t) := \frac{1}{h^{k}}\bigg[1+\bigg(\frac{k^2+k}{2h^4}+\frac{4k}{h^5}\bigg)(t-b)^3 -\frac{1}{h^2}\bigg(\frac{k^2+k}{h^4}+\frac{7k}{h^5}\bigg)(t-b)^4 + \frac{1}{h^4}\bigg(\frac{k^2+k}{2h^4} + \frac{3k}{h^5}\bigg)(t-b)^5 \bigg]\,
\end{equation}	 
for $t\in [b,h]$, and $\psi_k(t) := 0$ for $t\in [0,b)\,$.

Therefore, the first and second order derivatives of $\psi_k(t)$ for  $t\in [b,h]$ are
\begin{align*}
	\psi_{k}'(t) = &\frac{1}{h^{k}}\bigg[3\bigg(\frac{k^2+k}{2h^4}+\frac{4k}{h^5}\bigg)(t-b)^2 -\frac{4}{h^2}\bigg(\frac{k^2+k}{h^4}+\frac{7k}{h^5}\bigg)(t-b)^3 + \frac{5}{h^4}\bigg(\frac{k^2+k}{2h^4} + \frac{3k}{h^5}\bigg)(t-b)^4 \bigg]\,,\\
	\psi_{k}''(t) = &\frac{1}{h^{k}}\bigg[6\bigg(\frac{k^2+k}{2h^4}+\frac{4k}{h^5}\bigg)(t-b) -\frac{12}{h^2}\bigg(\frac{k^2+k}{h^4}+\frac{7k}{h^5}\bigg)(t-b)^2 + \frac{20}{h^4}\bigg(\frac{k^2+k}{2h^4} + \frac{3k}{h^5}\bigg)(t-b)^3 \bigg]\,;
\end{align*}
In this case, it is straightforward to verify that $\psi_{k}(t)$ satisfies \eqref{psi_bc}.

To show that the condition \eqref{psi_integral} is satisfied by $\psi_k$, we first notice, for $t\in[b,h]$,  $\psi_{k}(t) - h^{-k}$ equals to
\begin{equation*}
\begin{split}
	\psi_{k}(t) - \frac{1}{h^{k}} = &\frac{(t-b)^3}{h^k}\bigg(\frac{t-b}{h}-1\bigg)\bigg[\bigg(\frac{k^2+k}{2h^4}+\frac{3k}{h^5}\bigg)\bigg(\frac{t-b}{h}\bigg) - \bigg(\frac{k^2+k}{2h^4}+\frac{4k}{h^5}\bigg)\bigg]\,.
\end{split}
\end{equation*}
The above shows $\psi_{k}(t) - h^{-k}>0$ for $t\in[0,h]$. We now integrate $\psi_{k}(|x|)-h^{-k}$ directly by replacing  $t-b$ by $\tau$:
\footnotesize{
\begin{equation*}
\begin{split}
	\int_{B(0,h)}\bigg|\psi_{k}(|x|) - \frac{1}{h^{k}} \bigg| dx =& |S_{n-1}|\int_{h-\frac{h^2}{n}}^h t^{n-1}\frac{(t-b)^3}{h^{k}}\bigg(\frac{t-b}{h}-1\bigg)\bigg[\bigg(\frac{k^2+k}{2h^4}+\frac{3k}{h^5}\bigg)\bigg(\frac{t-b}{h}\bigg) - \bigg(\frac{k^2+k}{2h^4}+\frac{4k}{h^5}\bigg)\bigg] dt\\
	=&\frac{|S_{n-1}|}{h^{k+2}}\int_0^{\frac{h^2}{n}}(\tau+h-\frac{h^2}{n})^{n-1}\tau^3 (\tau-h)\bigg[\bigg(\frac{k^2+k}{2h^4}+\frac{3k}{h^5}\bigg)\tau - \bigg(\frac{k^2+k}{2h^4}+\frac{4k}{h^5}\bigg)h\bigg]d\tau\,.
\end{split}
\end{equation*}}
As $h<1$, for $n=2$, we have
\begin{equation*}
\int_{B(0,h)}\bigg|\psi_{k}(|x|) - \frac{1}{h^{k}} \bigg| dx	=\frac{h\pi}{13440}(-30h^4+473h^3-294h^2-3612h+5040) <  \frac{h\pi}{13340}(473 +5040)<h\,;
\end{equation*}
and for $n=3$, we have
\begin{equation*}
\begin{split}
	\int_{B(0,h)}\bigg|\psi_{k}(|x|) - \frac{1}{h^{k}} \bigg| dx	=&  \frac{h\pi}{688905}(25h^5-810h^4+10914h^3-41076h^2+3402h+136080)\\
	\le& \frac{h\pi}{688905}(25+10914+3402+136080)<h\,.
\end{split}
\end{equation*}
We have verified that our choice of $\psi_{n+1}(t)$ in \eqref{def_psi} satisfies the requirements \eqref{psi_bc} and \eqref{psi_integral} which are a proper candidate to be employed in the numerical computation.
\end{appendices}

\bibliographystyle{siamplain}
\nocite{*}
\bibliography{bio_dsm_radon.bib}
\citation

\end{document}